\newtheorem{theorem}{Theorem}[section]
\newtheorem{corollary}[theorem]{Corollary}
\newtheorem{proposition}[theorem]{Proposition}
\newtheorem{lemma}[theorem]{Lemma}
\theoremstyle{definition}    
\newtheorem{definition}[theorem]{Definition}
\theoremstyle{remark}
\newtheorem{remark}[theorem]{Remark}
\newtheorem{example}[theorem]{Example}
\newcommand{\sC}{\mathsf{C}}
\renewcommand{\L}{\mathcal{L}}
\newcommand{\F}{\mathcal{F}}
\newcommand{\sD}{\mathsf{D}}
\newcommand{\sX}{\mathsf{X}}
\newcommand{\sY}{\mathsf{Y}}
\newcommand{\sj}{\mathsf{j}}
\newcommand{\sq}{\mathsf{q}}
\newcommand{\R}{\mathbb{R}}
\newcommand{\C}{\mathbb{C}}
\renewcommand{\a}{\mathsf{a}}
\newcommand{\id}{\on{id}}
\newcommand\lie[1]{\mathfrak{#1}}
\newcommand{\K}{\mathsf{k}}
\newcommand{\h}{\lie{k}}
\newcommand{\g}{\lie{g}}
\renewcommand{\t}{\lie{t}}
\renewcommand{\k}{\lie{k}}
\newcommand{\gln}{\lie{gl}_n}
\newcommand{\on}{\operatorname}
\newcommand{\Ad}{ \on{Ad} }
\renewcommand{\ker}{ \on{ker}}
\newcommand{\rk}{ \on{rank}}
\newcommand{\Bis}{\on{Bis}}
\newcommand{\sz}{\mathsf{s}}
\newcommand{\uz}{\mathsf{u}}
\newcommand{\tz}{\mathsf{t}}
\newcommand{\mz}{\mathsf{m}}
\newcommand{\iz}{\mathsf{inv}}
\newcommand\qu{/\kern-.7ex/} 
\newcommand{\lra}{\longrightarrow}
\renewcommand{\d}{{\mathsf{d}}}
\newcommand{\rra}{\rightrightarrows}
\renewcommand{\subset}{\subseteq}
\renewcommand{\sp}{\mathsf{p}}
\newcommand{\Ra}{\Rightarrow}
\newcommand{\si}{{\mathsf{i}}}
\begin{document}
\title{On Relative Cohomology for Lie Groupoids and Lie Algebroids}

\author{Mar\'ia Amelia Salazar}\address{Escuela de Ciencias Aplicadas e Ingeniería,  Universidad EAFIT (Colombia)}\email{masalazarp@eafit.edu.co}

\begin{abstract} 
Motivated by our attempt to understand characteristic classes of Lie groupoids and geometric structures, we are brought back to the fundamentals of the cohomology theories of Lie groupoids and algebroids. One element that was missing in the literature was the notion of {\em relative cohomology} in this setting. The main aim of this paper is to develop the structural theory of this notion, the relation between the relative cohomology of groupoids and that of algebroids via van Est maps, and to indicate how it can be used to provide an intrinsic definition of characteristic classes. \end{abstract}
\maketitle
\tableofcontents

\section{Introduction}

The cohomology of Lie groups and Lie algebras has been used successfully to study the interaction between the algebraic and geometric properties of these objects. Lie algebra cohomology was first introduced by \'Elie Cartan in \cite{Cartan:algebras}, and later extended by Chevalley and Eilenberg in \cite{chevalley}. It was originally used to study the topology of Lie groups and homogeneous spaces by connecting de Rham cohomology with the properties of the Lie algebra. On the other hand, Lie group cohomology is closely related to the cohomology of the classifying space (see the work of Stasheff \cite{Stasheff}). Its relation to Lie algebra cohomology has been extensively studied and produced many classical results. In particular, van Est  \cite{vanEstI,vanEstII,vanEstIII} in one of his results produces a map $VE_G$ from the cochain complex that computes the differentiable cohomology of a Lie group $G$ to the Chevalley-Eilenberg complex of its Lie algebra; under some connectivity assumptions on $G$, this induces an isomorphism in cohomology.

Lie groupoids and Lie algebroids are a far-reaching generalization of Lie groups and Lie algebras respectively, and have been used to codify geometric structures and their symmetries (foliations, Poisson manifolds and (other) related structures, PDEs, etc). Their cohomologies are natural extensions of those for Lie groups and Lie algebras, and have been relevant when applied to geometric structures such as foliated cohomology and Poisson cohomology (see, e.g.\ \cite{Victor,Ieke} for foliated cohomology and \cite{Crainic:vanEst,Normal, Reeb, Zeiser} for Poisson cohomology). The relation between the cohomology of Lie groupoids and Lie algebroids started with the work of Weinstein and Xu in \cite{Weinstein_Xu}, where they first extended the van Est map to a map $VE_G:\sC(G)\to\sC(A)$ from the cochain complex that computes the differentiable cohomology of a Lie groupoid $G$ to the Chevalley-Eilenberg complex of its Lie algebroid $A$. Later Crainic \cite{Crainic:vanEst} produced other versions of van Est results; in {\em op.\ cit.\ }he also defined characteristic classes of representations of Lie algebroids, extending the classical characteristic classes of flat vector bundles \cite{Bismut, kamber}, and the modular class of Poisson manifolds \cite{Evens}. (See also \cite{CrainicFernandes:classes,Fernandes:classes} for characteristic classes of Lie algebroids.)

Motivated by understanding characteristic classes of representations of Lie algebroids intrinsically and by our attempt to develop a parallel theory of characteristic classes of representations of Lie groupoids, we are brought back to understanding the fundamentals of the cohomology theories of Lie groupoids and algebroids. One element that was notably missing in the literature was the notion of {\em relative cohomology} for Lie groupoids and Lie algebroids. The main aim of this paper is to introduce and study this notion, as well as to indicate how it can be used to define characteristic classes for both Lie groupoids and Lie algebroids. (These will be studied in more detail in future work.) This approach generalizes that for Lie groups and Lie algebras, where relative cohomology is used to define characteristic classes of flat bundles in various contexts (see \cite[Chapter 6]{kamber}). Throughout we pay special attention to ``van Est type" results, which relate the notions at the global (Lie groupoid) and infinitesimal (Lie algebroid) level. Using the language of groupoids, we find novel and explicit constructions. In what follows, we illustrate the structure of the paper and the main results.\\

{\it Relative cohomology of Lie groupoids (Section \ref{section:rel-group}).} We begin by revisiting the cochain complex $\sC(G)$ of a Lie groupoid $G$ computing its differentiable cohomology. Associated to any wide Lie subgroupoid $K$, we define a cochain subcomplex $\sC(G)^K$ (and its normalized version), which at degree $p$ consists of cochains invariant under $p+1$ natural $K$-actions. For $K$ proper, one can take the average of the actions and produce a cochain map
\begin{equation}\label{intro:Av}\mathrm{Av}\colon \sC(G)\to \sC(G)^K,\end{equation}
 which turns out to be a homotopy equivalence with the inclusion $\iota\colon \sC(G)^K\to  \sC(G)$ its homotopy inverse. Similar results hold when restricting to  the normalized subcomplexes (see Theorem \ref{thm:averaging}). If $G$ is a Lie group and $K$ is a {\em finite} subgroup of $G$, Guichardet constructed a cochain map analogous\footnote{In \cite{Guichardet}, the analog of $\mathrm{Av}$ maps into the normalized version.} to \eqref{intro:Av} that is a homotopy equivalence (see \cite[Proposition 3.3]{Guichardet}). Our construction, while inspired by that of Guichardet, obtains more general results even in the context of Lie groups (see Remark \ref{rmk:integration_groups}).

{\it Relative cohomology of Lie algebroids (Section \ref{section:rel-algebroid}).} In order to define relative cohomology in the context of Lie algebras and Lie groups, one deals with the $\k$-basic subcomplex $\sC(\g)_{\k-\mathrm{basic}}$ of the Chevalley-Eilenberg complex $\sC(\g)$ of a Lie algebra $\g$ and a Lie subalgebra $\k$, consisting of cochains that are $\k$-horizontal and $\k$-invariant under the coadjoint action when restricted to $\k^*$. Similarly, when $\g$ and $\k$ are the Lie algebras of a Lie group $G$ and a Lie subgroup $K$ respectively, one considers the $K$-basic subcomplex $\sC(\g)_{K-\mathrm{basic}}$; in this case, the relevant action is the restriction of the coadjoint action on $\g^*$ to $K$. Generalizing these notions to Lie algebroids and Lie groupoids is not obvious as, in general, a Lie algebroid $A$ does not act on itself, nor does a Lie groupoid act on its Lie algebroid. To solve this issue, we use the key observation that when $B\subset A$ is a Lie subalgebroid over the same base, there is a natural action of $B$ on the quotient $A/B$; similarly, if $K\subset G$ is a Lie subgroupoid integrating $B$, then $K$ acts naturally on $A/B$. This allows us to define and give the main properties of the $B$-basic subcomplex and the $K$-basic subcomplex respectively 
\[\sC(A)_{B-\mathrm{basic}}\subset\sC(A),\ \ \sC(A)_{K-\mathrm{basic}}\subset \sC(A),\]
which coincide with the standard notions in the context of Lie algebras and Lie groups.

{\it The transitive case (Section \ref{transitive}).} Here we assume the existence of a wide transitive Lie subgroupoid $K\subset G$. We show that there is a natural extension map
    \[\mathrm{ext}\colon \sC^\bullet(G_z)^{K_z}\to \sC^\bullet(G)^{K},\]
where $K_z$, $G_z$ are the isotropy Lie groups of $K$ and $G$ respectively, over a point $z$ of the base. This map is an isomorphism of chain complexes with inverse given by the restriction (see Theorem \ref{thm:ext}). An analogous result holds true (see Theorem \ref{thm:ext_2}) when looking at the infinitesimal counterpart:
   \[\mathrm{ext}\colon \sC^\bullet(\g_z)_{K_z\mathrm{-basic}}\to \sC^\bullet(A)_{K\mathrm{-basic}},\]
where $A$ is the Lie algebroid of $G$ and $\g_z$ is the isotropy Lie algebra. Our theory shows that, in this case, the relative cohomology of Lie groupoids and Lie algebroids is the same as the relative cohomology of their isotropy Lie groups and Lie algebras.

{\it Van Est maps (Section \ref{section:van-est}).} In this section we study the van Est maps in the context of relative cohomology, and pay special attention to the case where the Lie subgroupoid $K$ is proper. We make use of {\it tubular structures} and {\it Cartan decompositions} of $(G,K)$ (see Definitions \ref{def:tubular} and \ref{def:Cartan}) to obtain van Est integration maps (i.e., cochain maps going from the relative cohomology of the Lie algebroid to the relative cohomology of the Lie groupoid). As a consequence, we conclude that the van Est differentiation map (see equation \eqref{eq:vanEst_relative})
\[VE_{G/K}\colon \sC^\bullet(G)\to\sC^\bullet(A)_{K\mathrm{-basic}}\]
is a homotopy equivalence when $K$ is a proper subgroupoid  (see Theorem \ref{prop:rightinversegk}). For example, for a Lie group $G$ with finitely many connected components and $K\subset G$ a maximal compact subgroup, tubular structures always exist (see e.g.\ \cite[Chapter VII]{Borel}). To prove the results we use the Perturbation Lemma,  treated in the Appendix. We apply the previous results to conclude that in the transitive case, the differentiable cohomology of the Lie groupoid is homotopy equivalent to the cohomology of its isotropy Lie algebra relative to the maximal compact subgroup of its isotropy Lie group (see Theorem \ref{thm:transitive}).

{\it Characteristic classes (Section \ref{section:cc}).} As an application of our theory we introduce the definition of characteristic classes of a representation of a Lie groupoid. 
This also allows us to give an alternative, simple and direct definition
of the existing notion of characteristic classes of a representation of a Lie algebroid (see \cite{Crainic:vanEst,CrainicFernandes:classes,Fernandes:classes}), and to relate them with our definition of characteristic classes for Lie groupoids. 
This represents the first step to study characteristic classes of Lie groupoids endowed with a geometric structure, e.g., symplectic, symplectic-Nijenhuis, presymplectic, Pfaffian, or contact groupoids. 
At the infinitesimal level, we also hope that our approach will bring a new insight into existing characteristic classes of Poisson manifolds (the modular class), foliations (Bott's characteristic classes), and other types of geometries encoded by Lie algebroids. This will be studied in a future paper.\\

\subsection*{Conventions} All the groupoids appearing in this paper are assumed to be Hausdorff. Moreover, any Lie subgroupoid $K$ of a Lie groupoid $G$ is assumed to be wide, i.e.\  the inclusion $K\hookrightarrow G$ is an injective immersion of Lie groupoids 
 and $K$ has the same base as $G$. Analogously, any Lie subalgebroid $B$ of a Lie algebroid $A$ has the same base as $A$.

\subsection*{Acknowledgments} The author would like to thank Marius Crainic, Ioan M\u arcut and Eckhard Meinrenken for useful comments and suggestions. The author was partly supported by CNPq grant PQ1 304410/2020-9, FAPERJ grants JCNE 262012932021 and ARC
26211412201, and by the Serrapilheira Institute grant Serra-1912-3205. The author is grateful to the Max Planck Institute for Mathematics in Bonn for its hospitality and financial support. This study was financed in part by the Coordena\c c\~ao de Aperfei\c coamento
de Pessoal de N\'ivel Superior -- Brazil (CAPES) -- Finance code 001.

%
\section{Relative cohomology of Lie groupoids} \label{section:rel-group}

We will review the cochain complexes for Lie groupoids, and define the relative cohomology with respect to a Lie subgroupoid. We will give the main properties of these notions and exhibit equivalences between cochain complexes. In Theorem \ref{thm:averaging}, we show that the cohomology of a Lie groupoid relative to a proper subgroupoid computes its differentiable cohomology.

\subsection{The simplicial manifold $B_pG$}\label{sec:simplicial}
Let $G\rra M$ be a Lie groupoid, with source and target denoted by $\sz,\tz\colon G\to M$; unit denoted by $\uz\colon M\to G$ or simply by elements of $M$; inverse denoted by $\iz\colon G\to G$ or simply by $\iz(g)=g^{-1}$. Elements $g,h\in G$ are \emph{composable} if $\sz(g)=\tz(h)$; in this case, the multiplication is denoted by $\mz\colon G\tensor[_\sz]{\times}{_\tz} G\to G$ or simply by $\mz(g,h)=gh$. We denote by 
\[ B_pG=\{(g_1,\ldots,g_p)|\ \sz(g_i)=\tz(g_{i+1}),\ \ 0< i<p\}\]
the space of \emph{$p$-composable arrows}; by convention $B_0G=M$. Every $p$-composable arrow comes with $p+1$ base points $(m_0,\ldots,m_p)$, where $m_i=\sz(g_i)=\tz(g_{i+1})$. 
The collection of spaces $B_pG$ defines a simplicial manifold $B_\bullet G$ called the \emph{nerve} of the groupoid. The face map
$\partial_i\colon B_pG\to 
B_{p-1}G$
drops the $i$-th base point: 
\begin{equation}\label{eq:face}  \partial_i(g_1,\ldots,g_p)=\begin{cases}
(g_2,\ldots,g_p), & i=0,\\
(g_1,\ldots,g_ig_{i+1},\ldots,g_p),&0<i<p,\\
(g_1,\ldots,g_{p-1}), &i=p.
\end{cases}
\end{equation}
The $i$-th degeneracy map $\epsilon_i\colon B_pG\to B_{p+1}G$ doubles the $i$-th base point:
\begin{equation}\label{eq:degeneracy}\epsilon_i(g_1,\ldots,g_p)=(g_1,\ldots,g_{i},m_i,g_{i+1},\ldots,g_p),\ \ i=0,\ldots,p.\end{equation}
The manifolds $B_pG$ also
come equipped with 
$p+1$ commuting right $G$-actions $A_i$, $i=0, \ldots , p$, defined by
\begin{equation}\label{eq:actions} 
 A_i((g_1,\ldots,g_p), a)=\begin{cases}
(a^{-1}g_1,g_2,\ldots,g_p) & i=0,\\
(g_1,\ldots,g_i a,a^{-1} g_{i+1},\ldots,g_p)&0<i<p,\\
(g_1,\ldots,g_{p-1},g_p a) &i=p;
\end{cases}
\end{equation}
the $i$-th action happens along the map $(g_1,\ldots,g_p)\mapsto m_i$. 
(For basics on principal actions of Lie groupoids, see e.g.\ \cite{Crainic:vanEst}.)\\

We define a simplicial principal bundle
\[\kappa_\bullet\colon E_\bullet G\to B_\bullet G\]
over $BG$ as follows. 
For each $p$, define the trivial principal $G$-bundle
\begin{equation}\label{eq:kappap}
 \kappa_p\colon E_p G=B_pG\times_M G\to B_pG,\end{equation}
with principal left action 
\begin{equation}\label{eq:Gaction} a\cdot (g_1,\ldots,g_p;g)=(g_1,\ldots,g_p;ga^{-1}),\end{equation}
that happens along the map
\[ \pi_p(g_1,\ldots,g_p;g)=\sz(g).\] 
After the change of coordinates $(g_1,\ldots,g_p;g)$ $\mapsto$ $ ((g_1,g_2\cdots g_pg),\ldots,(g_{p-1},g_pg),(g_p,g))$, one can also represent $E_pG$ as
$$B_pG\times_M G\simeq B_p(G\ltimes G),$$
where $G\ltimes G\rra G$ is the action groupoid, with the $G$-action on $G$ by left multiplication. This is useful as one readily sees that $E_\bullet G$ carries a simplicial structure.\\
Yet another description of $E_pG$ shows that $\kappa_\bullet$ is a simplicial map: After the change of coordinates $(g_1,\ldots,g_p;g)\mapsto (g_1g_2\cdots g_pg,g_2\cdots g_pg,\ldots,g_pg,g)$, one can represent $E_pG$ as 
\begin{equation}\label{eq:Ep}E_pG\simeq\{(g_0,\ldots,g_p)\in G^{p+1}\mid \sz(g_0)=\cdots=\sz(g_p)\}.\end{equation}
Under this identification, the projection map $\kappa_p$ of \eqref{eq:kappap} is given by 
\[(g_0,\ldots,g_p)\mapsto(g_0g_1^{-1},g_1g_2^{-1},\ldots, g_{p-1}g_p^{-1} ),\]
the principal $G$-action \eqref{eq:Gaction} is given by 
$a\cdot(g_0,\ldots,g_p)=(g_0a^{-1},\ldots,g_pa^{-1})$, the face maps $\partial_i\colon E_pG\to E_{p-1}G$ are given by dropping the $i$-th entry, and the degeneracy maps $\epsilon_i\colon E_pG\to E_{p+1}G$ are given by repeating the $i$-th entry. Hence, the face and degeneracy maps are $G$-equivariant, and so $\kappa_\bullet$ is a simplicial map.\\
The actions \eqref{eq:actions} lift to
$p+1$ commuting right $G$-actions $\bar{A}_i$, $i=0, \ldots , p+1$, on $E_pG=B_pG\times_M G$, given by
\begin{equation}\label{eq:actions2} 
\bar{A}_i((g_1,\ldots,g_p;g), a)=\begin{cases}
(a^{-1}g_1,g_2,\ldots,g_p;g) & i=0,\\
(g_1,\ldots,g_ia, a^{-1}g_{i+1},\ldots,g_p;g)&0<i<p,\\
(g_1,\ldots,g_{p-1},g_p a;a^{-1}g) &i=p;
\end{cases}
\end{equation}
the $i$-th action happens along the map $(g_1,\ldots,g_p;g)\mapsto m_i$. 

\subsection{The groupoid complex}
The groupoid cochain complex $\big(\sC(G),\delta\big)$ is given by
\[\xymatrix{0 \ar[r] & \sC^0(G) \ar[r]^-{\delta} & \cdots \ar[r]^-{\delta} & \sC^p(G) \ar[r]^-{\delta} & \sC^{p+1}(G) \ar[r]^-{\delta} & \cdots,}\]
where $\sC^p(G)=C^\infty(B_pG)$ and $\delta$ is given on $p$-cochains by $\delta = \sum_{i=0}^{p+1}(-1)^i \partial_i^*$. A direct calculation shows that $\delta\circ\delta=0$. 
Its cohomology is called the {\it differentiable cohomology of} $G$.

For the simplicial manifold $E_\bullet G$ as in \eqref{eq:kappap}, we denote its cochain complex by $\big(\sC(G\ltimes G),\delta\big)$. Note that the inclusion $\sj\colon\sC(G)\to \sC(G\ltimes G)$ given in degree $p$ by the pullback $\kappa_p^*$ identifies $\sC(G)$ with the $G$-invariant subcomplex of $\sC(G\ltimes G)$ with respect to the action induced by \eqref{eq:Gaction}. \\

 Consider now a Lie subgroupoid $K\subset G$.
For each of the actions \eqref{eq:actions} we can consider the restriction to $K$-actions. For each integer $p \geq 0$, we denote the subspace of $\sC^p(G)$ consisting of $p$-cochains that are invariant w.r.t.\ $p+1$ commuting $K$-actions by $\sC^p(G)^K$. By a direct calculation, we have that $\partial_i^*(\sC^p(G)^K) \subseteq \sC^{p+1}(G)^K$ for any $p \geq 0$ and any $i=0,\ldots,p+1$.
\begin{definition}The {\it Lie groupoid complex relative to $K$} is the $K$-invariant subcomplex of 
\[\big(\sC(G)^{K},\delta \big) \subseteq \big(\sC(G),\delta \big).\]
The {\em (differentiable) cohomology of $G$ relative to $K$} is the cohomology of this subcomplex; we denote it by $H^\bullet(G,K)$. If $K$ is clear from the context, we refer to this cohomology simply as the {\em relative cohomology of $G$}.
\end{definition}

\begin{remark}\label{rmk:normalized} Sometimes it is useful to work with the {\it normalized subcomplex}
$$\widetilde{\sC}(G)\subset \sC(G)$$ consisting of cochains with the property that $f(g_1\ldots,g_p)=0$ whenever $g_i$ is a unit for some $i$. It is well-known that the inclusion $\iota\colon  \widetilde{\sC}(G)\to\sC(G)$ is a homotopy equivalence (see e.g.\ \cite{Guichardet}).

For a Lie subgroupoid $K\subset G$, we analogously define the {\it normalized $K$-invariant subcomplex} $$\widetilde{\sC}(G)^{K}$$ consisting of functions $f\in \sC(G)^{K}$ with the property that $f(g_1,\ldots,g_p)=0$ whenever $g_i \in K$ for some $i$. Observe that $ \widetilde{\sC}(G)^{K}= \widetilde{\sC}(G)\cap\sC(G)^K$.
\end{remark}

Similarly, one defines the normalized subcomplex $\widetilde{\sC}(G\ltimes G)\subset \sC(G\ltimes G)$, the $K$-invariant subcomplex $\sC(G\ltimes G)^K$, and its normalized version $\widetilde{\sC}(G\ltimes G)^K$
\[\widetilde{\sC}(G\ltimes G)^K\subset \sC(G\ltimes G)^K\subset \sC(G\ltimes G)\] 
with respect to the $K$-actions \eqref{eq:actions2}. In particular, a function $f\in \sC(G\ltimes G)^K$ is normalized if $f(g_1,\ldots,g_p;g)=0$ whenever $g_i\in K$ for some $i=1,\ldots p$. Once again, the map $\sj\colon\sC(G)^K\to \sC(G\ltimes G)^K$ ($\sj=\kappa_p^*$ at degree $p$) includes $\sC(G)^K$ as the $G$-invariant subcomplex of $\sC(G\ltimes G)^K$ with respect to the action \eqref{eq:Gaction}, and similarly for $\sj\colon\widetilde{\sC}(G)^K\to \widetilde{\sC}(G\ltimes G)^K$.

\begin{remark}\label{rmk:normalized_2}For latter use, in terms of the description \eqref{eq:Ep} of $EG$,
 the normalized subcomplex $\widetilde{\sC}(G\ltimes G)\subset \sC(G\ltimes G)$ consists of those functions on $E_pG$ with the property that $f(g_0\ldots,g_p)=0$ whenever $g_i=g_{i+1}$ for some $i=0,\ldots,p-1 $. The $K$-invariant subcomplex $\sC(G\ltimes G)^K$ consists of those functions with the property that $f(k_0g_0,\ldots,k_pg_p)=f(g_0,\ldots,g_p)$ whenever $k_i\in K$ and the multiplication $k_ig_i$ is well defined for $i=0,\ldots,p-1 $. Lastly, its normalized version $\widetilde{\sC}(G\ltimes G)^K$ consists of function $f\in\sC^p(G\ltimes G)^K$ such that $f(g_0,\ldots,g_p)=0$ whenever $g_ig_{i+1}^{-1}\in K$ for some $i=0,\ldots, p-1$. 
\end{remark}

\begin{remark}\label{rmk:cup}
The usual cup product
$$(f_1\cup f_2)(g_1,\ldots,g_{p+q})=f_1(g_1,\ldots,g_p)f_2(g_{p+1},\ldots, g_{p+q})$$ 
defines a product structure $\sC^p(G)\times \sC^q(G)\to \sC^{p+q}(G)$, which passes to cohomology. By a direct calculation, the cup product of two elements in $\sC^\bullet(G)^K$ lies in $\sC^\bullet(G)^K$. Hence, $\big(\sC^\bullet(G)^K,\delta\big)\subset \big(\sC^\bullet(G),\delta\big)$ is a DG subalgebra and $H^\bullet(G,K)$ (and $H^\bullet(G)$) is a graded algebra. 
Similarly, the normalized $K$-invariant subcomplex is a DG subalgebra and its cohomology is a graded algebra.
\end{remark}

\subsection{Equivalences between complexes}\label{sec:equivalences}
 Let $K$ be a Lie subgroupoid of $G$. The aim of this section is to show that, if $K$ is proper, then the cohomology of $G$ can be computed using the normalized cochain complex of $G$ relative to $K$ (see Theorem \ref{thm:averaging}). We do this in two steps. First, we prove Theorem \ref{thm:normalized} below, which extends to the relative case the fact that the normalized cochain complex computes the differentiable cohomology. The second step is to prove Theorem \ref{thm:averaging}.



\begin{theorem}\label{thm:normalized} The inclusion $\iota\colon \widetilde{\sC}(G)^{K}\to \sC(G)^{K} $ is a homotopy equivalence.
\end{theorem}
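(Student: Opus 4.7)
The theorem is a relative form of the Eilenberg--MacLane simplicial normalization theorem for simplicial abelian groups. In the special case $K = M$, it reduces to the classical statement that cochains vanishing on degenerate simplices form a chain-homotopy-equivalent subcomplex of the full simplicial cochain complex. The plan is to adapt the classical argument by exploiting $K$-invariance to identify values of a cochain on the locus $\{g_i \in K\}$ with values on degeneracy-like configurations, from which the retraction and homotopy can be built out of the simplicial structure maps.

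The key observation is that, using the $(i-1)$-th $K$-action from \eqref{eq:actions} with parameter $k = g_i \in K$ (together with the analogous boundary actions at $i = 1$ and $i = p$), $K$-invariance forces the identity
\[ f(g_1, \ldots, g_p) = f(g_1, \ldots, g_{i-1} g_i, m_i, g_{i+1}, \ldots, g_p) = \bigl((\epsilon_{i-1} \circ \partial_{i-1})^* f\bigr)(g_1, \ldots, g_p)\]
whenever $g_i \in K$. Thus on the $K$-invariant complex the value of $f$ at a point of the ``$K$-boundary'' is automatically equal to a degeneracy-like value, which is the exact analogue of the classical fact driving the usual normalization theorem. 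Using this, I would introduce the filtration $\sC^p(G)^K = F_0 \supset F_1 \supset \cdots \supset F_p = \widetilde{\sC}^p(G)^K$, where $F_k$ consists of cochains vanishing on $\{g_i\in K\}$ for all $i \le k$, and at each stage construct an explicit retraction $\rho_k \colon F_{k-1} \to F_k$ together with a chain homotopy $h_k$ built from the operators $\epsilon_{i-1}^*$ and $\partial_{i-1}^*$ and appropriate correction terms, checking that they preserve the remaining $p$ commuting $K$-invariance constraints. Composing across $k = 1,\ldots,p$ would yield the desired global retraction and chain homotopy.

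The main obstacle I anticipate is that the subspaces $F_k$ are typically not preserved by the coboundary $\delta$: for instance, $\partial_0^* f$ for $f \in F_1$ can break the vanishing condition at the first coordinate because $\partial_0$ shifts indices. Hence the naive stepwise iteration of partial retractions does not assemble into a chain homotopy without further work. I expect to handle this either by invoking the perturbation lemma of the Appendix to upgrade the partial contractions into a global one, or by writing down a single closed-form retraction as an alternating sum of compositions of $\epsilon_i^*$ and $\partial_j^*$ operators (in the spirit of the classical Eilenberg--Zilber--MacLane shuffle), and verifying the homotopy identity $\id - \iota\circ r = \delta h + h\delta$ by a direct combinatorial calculation combining the simplicial identities with the $K$-invariance identity displayed above.
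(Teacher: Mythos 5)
Your key identity is correct and is exactly the right starting point: invariance under the $(i-1)$-th $K$-action from \eqref{eq:actions} with parameter $g_i\in K$ shows that a $K$-invariant cochain vanishes on $\{g_i\in K\}$ as soon as it vanishes on degenerate simplices, i.e.\ $\widetilde{\sC}(G)^{K}=\widetilde{\sC}(G)\cap\sC(G)^{K}$. This is precisely the (unproved) observation with which the paper's argument begins, so the reduction you propose --- show that the classical normalization homotopy equivalence $\widetilde{\sC}(G)\hookrightarrow\sC(G)$ restricts to the $K$-invariant subcomplexes --- is the same reduction the paper makes.

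The gap is in everything after that: the chain homotopy, which is the actual content of the theorem, is never produced. Your primary route (the filtration $F_0\supset\cdots\supset F_p$) fails for the reason you yourself identify, and neither fallback is carried far enough to be checkable. The Perturbation Lemma of the Appendix is a statement about double complexes with a horizontal contracting homotopy; there is no double complex in sight here, so ``invoking the perturbation lemma to upgrade the partial contractions'' is not set up. The shuffle-type closed formula would have to be written down and the identity $\id-\iota\circ r=\delta h+h\delta$ verified, together with the (true but unproved in your sketch) fact that $\epsilon_i^*$ and $\partial_j^*$ preserve $K$-invariance. For comparison, the paper avoids the combinatorics entirely: it passes to the acyclic complex $\sC(EG)$, uses Guichardet's $G$-equivariant retraction $N$ given by the alternating ``difference of evaluations'' formula (which visibly preserves $K$-invariance), and then builds the homotopy $\mathsf{n}$ \emph{recursively}, degree by degree, out of the explicit contracting homotopy $\mathsf{h}$ of $\sC(EG)$; $G$-equivariance of every operator is tracked so that the whole construction descends to the $G$-invariant subcomplex $\sC(G)^{K}\subset\sC(EG)^{K}$. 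Until you either carry out the combinatorial verification of a universal formula or supply some analogue of this recursive acyclicity argument, the proof is a plan rather than a proof.
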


In preparation for the proof of Theorem \ref{thm:normalized}, recall that the cohomology of
$\big(\sC(G\ltimes G),\delta\big)$ vanishes in all positive degrees.
To see this, consider the maps
\begin{equation*}\label{eq:maph} h_p\colon E_pG\to E_{p+1}G,\ (g_1,\ldots,g_p;g)\mapsto (g_1,\ldots,g_p,g;m)\end{equation*}
where $m=\sz(g)$. 
The map 
\begin{equation}\label{eq:h} \mathsf{h}\colon \sC^{p}(G\ltimes G)\to \sC^{p-1}(G\ltimes G),\ \ f\mapsto (-1)^p h_{p-1}^*f\end{equation}
satisfies 
$[\mathsf{h},\delta]=1-\si\circ \sp,$
where $\si=\pi_0^*\colon C^\infty(M)\to\sC^{0}(G\ltimes G) $ and $\sp\colon \sC^{0}(G\ltimes G)\to C^\infty(M)$ is the 
left inverse to $\si$
given by pullback under the inclusion $\uz\colon M\hookrightarrow E_0 G=G.$ Similarly, the cohomology of $\big(\sC(G\ltimes G)^K,\delta\big)$ vanishes in all positive degrees: To see this, use the restrictions of $\mathsf{h}, \mathsf{p}, \mathsf{i}$ to this complex.

\begin{proof}[Proof of Theorem \ref{thm:normalized}]
First, we will proceed to exhibit a homotopy inverse $N\colon \sC(G)\to  \widetilde{\sC}(G)$ of the inclusion $\iota\colon  \widetilde{\sC}(G)\to\sC(G)$. We will show that $N$ restricts to a homotopy inverse $N^K\colon {\sC}(G)^{K}\to  \widetilde{\sC}(G)^{K}$ of $\iota\colon \widetilde{\sC}(G)^{K}\to \sC(G)^{K} $.

For the definition of $N$ (and that of the homotopy $\mathsf{n}$ below), we use the description of $EG$ presented in \eqref{eq:Ep} and follow Remark \ref{rmk:normalized_2}.
Setting 
\[(\delta_{g_0}\otimes\cdots\otimes \delta_{g_p})(f):=f(g_0,\ldots, g_p),\]
as in \cite[Chapter 2]{Guichardet}, we define $N\colon \sC^p(G\ltimes G)\to  \widetilde{\sC}^p(G\ltimes G)$ by
\[N(f)(g_0,\ldots,g_p)=\delta_{g_0}\otimes(\delta_{g_1}-\delta_{g_0})\otimes\cdots\otimes(\delta_{g_n}-\delta_{g_{p-1}})(f).\]
(It follows by a direct computation that $N(f)\in\widetilde{\sC}^p(G\ltimes G)$.) Note that $N$ is $G$-equivariant and it restricts to a map $N^K\colon {\sC}(G\ltimes G)^{K}\to  \widetilde{\sC}(G\ltimes G)^{K}=\widetilde{\sC}(G\ltimes G)\cap{\sC}(G\ltimes G)^{K}$. In degree $p=0$, $N=\id_{\sC^0(G\ltimes G)}$ so that $\si=\si\circ N$; a computation shows that $N$ is a cochain map (see also \cite[Corollary 2.4]{Guichardet}). It is also a computation to see that \[N\circ\iota=\id_{\widetilde{\sC}(G\ltimes G)}\] for the inclusion $\iota\colon \widetilde{\sC}^p(G\ltimes G)\to {\sC}^p(G\ltimes G)$. For $\mathsf{v}=\id_{{\sC}(G\ltimes G)}-\iota\circ N$, we will show (see the computation below) that $\mathsf{v}=[\mathsf{n},\delta]$ for some $G$-equivariant maps $\mathsf{n}\colon {\sC}^p(G\ltimes G)\to {\sC}^{p-1}(G\ltimes G)$ that restrict to the $K$-invariant subcomplex ${\sC}(G\ltimes G)^K$, thus concluding that $\iota\colon \widetilde{\sC}(G\ltimes G)^{K}\to \sC(G\ltimes G)^{K}$ is a homotopy equivalence. As all the maps involved are $G$-equivariant, it follows that  $\iota$ restricts to the desired homotopy equivalence $\iota\colon \widetilde{\sC}(G)^{K}\to \sC(G)^{K}$ of the $G$-invariant subcomplexes. 

First, we define $\mathsf{n}_1\colon \sC^1(G\ltimes G)\to \sC^0(G\ltimes G)$ as $\mathsf{n}_1=0$. As $\mathsf{v}_0=0$ then $\mathsf{v}_0=\mathsf{n}_1\circ\delta$. Let us argue recursively and suppose that we have constructed $G$-equivariant maps $\mathsf{n}_i\colon \sC^i(G\ltimes G)\to \sC^{i-1}(G\ltimes G)$ for $i=1,\ldots, p-1$ with the property that 
\[\mathsf{v}_{i-1}=\delta\circ\mathsf{n}_{i-1}+\mathsf{n}_i\circ\delta.\]
Let $\bar{\mathsf{v}}_{p-1}=\mathsf{v}_{p-1}-\delta\circ\mathsf{n}_{p-1}$, then
\[\bar{\mathsf{v}}_{p-1}\circ \delta=\mathsf{v}_{p-1}\circ\delta-\delta\circ\mathsf{n}_{p-1}\circ\delta=\delta\circ\mathsf{v}_{p-2}-\delta\circ\mathsf{n}_{p-1}\circ\delta=0,\]
hence $\bar{\mathsf{v}}_{p-1}|_{\mathrm{Im}(\delta\colon \sC^{p-2}{(G\ltimes G)}\to\sC^{p-1}(G\ltimes G))}=0$. As \[\mathrm{Im}(\delta\colon \sC^{p-2}{(G\ltimes G)}\to\sC^{p-1}(G\ltimes G))=\ker (\delta\colon \sC^{p-1}{(G\ltimes G)}\to\sC^{p}(G\ltimes G)),\] $\bar{\mathsf{v}}_{p-1}$ passes to the quotient $\sC^{p-1}(G\ltimes G)/\ker\delta$. Note that 
\[\delta\colon\sC^{p-1}(G\ltimes G)/\ker\delta\to\sC^p(G\ltimes G)\]
is injective with left inverse $\mathsf{h}\colon \sC^{p}(G\ltimes G)\to \sC^{p-1}(G\ltimes G)/\ker\delta$ (the homotopy operator \eqref{eq:h} after passing to the quotient). Define $\mathsf{n}_p \colon \sC^p(G\ltimes G)\to \sC^{p-1}(G\ltimes G)$ by 
\begin{equation}\label{eq:hom}\mathsf{n}_p(f)(g_0,\ldots,g_{p-1})=\bar{\mathsf{v}}_{p-1}(\mathsf{h}(g_0\cdot f))(\tz(g_0),g_1g_0^{-1},\ldots, g_{p-1}g_0^{-1}).\end{equation}
The map $\mathsf{n}_p$ is $G$-equivariant and $\mathsf{n}_p\circ\delta=\bar{\mathsf{v}}_{p-1}$, thus $\mathsf{n}_p\circ\delta=\mathsf{v}_{p-1}-\delta\circ\mathsf{n}_{p-1}$ as desired.\end{proof}

Assume that $K\subset G$ is a {\it proper} Lie subgroupoid, in the sense that $(\tz,\sz):K\to M\times M$ is a proper map.
We will use the properness of the Lie subgroupoid $K$ of $G$ to produce averaging operators, turning any cocycle on $\sC(G)$ into a $K$-invariant cocycle on $\sC(G)^K$.  
For this purpose recall \cite{Crainic:vanEst,Joao, Tu} that a proper Lie groupoid admits {\it (left invariant) properly supported normalized Haar system}
$\mu=\{\mu_m\}_{m\in M}$, i.e.\ a family of smooth measures on $\tz^{-1}(m)$ such that:

\begin{itemize}
\item for any compactly supported function $f\in C^{\infty}(K)$, the formula
$$m \mapsto\int_{a\in\tz^{-1}(m) }f(a)\ \mu_m(a)$$
defines a smooth function on $M$,
\item $\mu$ is left invariant, i.e.\ $(L_g)_*\mu_m=\mu_{\tz(g)}$ for all $g\in\sz^{-1}(m)$,
\item $\tz$ restricts to a proper map $\mathrm{supp}(\mu)\to M$, and
\item $\int_{\tz^{-1}(m) }\mu_m=1$ for all $m\in M$.
\end{itemize}
Choose such a Haar system $\mu$ on $K$ and for each $p$ denote by 
\begin{equation}\label{eq:av} \on{Av}^{(i)}\colon C^\infty(B^pG)\to C^\infty(B^pG)\end{equation}
the induced averaging operator with respect to the $i$-th $K$-action \eqref{eq:actions}. The operators $\on{Av}^{(i)}$ commute since the actions commute, and we denote by 
\begin{equation}\label{eq:averaging}\on{Av}=\on{Av}^{(0)}\circ \cdots \circ \on{Av}^{(p)}\end{equation}
 the total $K^{p+1}$-averaging operator. 
 
 \begin{theorem}\label{thm:averaging} Suppose $K\subset G$ is a proper Lie subgroupoid. An averaging operator 
 \[\on{Av}\colon \sC^\bullet(G)\to  \sC^\bullet(G)^{K}\]
 defined by \eqref{eq:averaging} is a homotopy equivalence with homotopy inverse given by the inclusion
  \[\iota\colon \sC^\bullet(G)^{K}\to \sC^\bullet(G).\]
  
Hence, the inclusion $\iota\colon \widetilde{\sC}^\bullet(G)^{K}\to \sC^\bullet(G)$ is a homotopy equivalence.
  \end{theorem}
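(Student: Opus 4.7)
The plan is to show $\on{Av}\colon\sC^\bullet(G)\to\sC^\bullet(G)^K$ is a cochain map satisfying $\on{Av}\circ\iota=\id$ strictly and $\iota\circ\on{Av}$ chain homotopic to $\id_{\sC^\bullet(G)}$. The identity $\on{Av}\circ\iota=\id$ is immediate, since the normalization $\int_{\tz^{-1}(m)}\mu_m=1$ ensures each $\on{Av}^{(i)}$ fixes every $K$-invariant cochain. For the cochain map property I would verify $\partial_j^*\circ\on{Av}=\on{Av}\circ\partial_j^*$ directly: on the one hand $\partial_j^*f$ is automatically invariant under the $j$-th action \eqref{eq:actions} on $B_{p+1}G$, since that action modifies only the arrows adjacent to the dropped basepoint $m_j$, so $\on{Av}^{(j)}\circ\partial_j^*=\partial_j^*$; on the other hand, the remaining $p+1$ actions on $B_{p+1}G$ are intertwined by $\partial_j^*$ with the $p+1$ actions on $B_pG$ via an index shift (together with left-invariance of $\mu$). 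Combining these observations yields $\on{Av}\circ\delta=\delta\circ\on{Av}$, and since the $\on{Av}^{(i)}$ commute, the image lies in $\sC^\bullet(G)^K$.

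The main task is to produce a chain homotopy for $\iota\circ\on{Av}\sim\id$ on $\sC^\bullet(G)$, for which I would follow the strategy of the proof of Theorem~\ref{thm:normalized}. Lift $\on{Av}$ to a $G$-equivariant cochain map $\on{Av}\colon\sC(EG)\to\sC(EG)^K$ built from the actions \eqref{eq:actions2}; it continues to satisfy $\on{Av}\circ\iota=\id$ and restricts on $G$-invariants to the $\on{Av}$ on $\sC(G)$. Setting $\mathsf{v}=\id-\iota\circ\on{Av}$, I recursively construct $G$-equivariant maps $\mathsf{n}_p\colon\sC^p(EG)\to\sC^{p-1}(EG)$ obeying $\mathsf{v}_{p-1}=\delta\circ\mathsf{n}_{p-1}+\mathsf{n}_p\circ\delta$. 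Starting from $\mathsf{n}_0=0$, at each stage the operator $\bar{\mathsf{v}}_{p-1}:=\mathsf{v}_{p-1}-\delta\circ\mathsf{n}_{p-1}$ vanishes on $\mathrm{Im}\,\delta=\ker\delta\subset\sC^{p-1}(EG)$ (using acyclicity of $(\sC(EG),\delta)$ in positive degrees), hence descends to the quotient $\sC^{p-1}(EG)/\ker\delta$, on which $\delta$ admits the left inverse $\mathsf{h}$ from \eqref{eq:h}; a formula in the style of \eqref{eq:hom} then defines a $G$-equivariant $\mathsf{n}_p$. Restricting all of this data to $G$-invariants yields the sought homotopy on $\sC^\bullet(G)$.

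The second assertion follows by composition: $\iota\colon\widetilde{\sC}^\bullet(G)^K\to\sC^\bullet(G)$ factors as $\widetilde{\sC}^\bullet(G)^K\hookrightarrow\sC^\bullet(G)^K\hookrightarrow\sC^\bullet(G)$, with the first inclusion a homotopy equivalence by Theorem~\ref{thm:normalized} and the second by the argument just sketched. The main obstacle I anticipate lies in the recursive construction of $\mathsf{n}_p$, specifically ensuring $G$-equivariance at every stage so that the homotopy descends to $\sC(G)=\sC(EG)^G$; this is precisely the technical subtlety handled in the proof of Theorem~\ref{thm:normalized} via the twist in \eqref{eq:hom}, and I expect its adaptation here to require only minor modifications, since the $K$-averaging commutes with the right $G$-action \eqref{eq:Gaction} by construction.
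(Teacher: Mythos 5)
Your proposal follows essentially the same route as the paper: lift $\on{Av}$ to a $G$-equivariant cochain map $\on{Av}_{EG}$ on $\sC(EG)$, construct the homotopy $\mathsf{n}$ recursively exactly as in the proof of Theorem~\ref{thm:normalized} (with $\on{Av}_{EG}$ in the role of $N$), descend to $G$-invariants, and deduce the final claim from Theorem~\ref{thm:normalized}. The one step to repair is the base of your recursion: in degree $0$ one has $\ker(\delta\colon\sC^0(EG)\to\sC^1(EG))=\mathrm{Im}(\si)\neq 0$, so the identification $\mathrm{Im}\,\delta=\ker\delta$ you invoke fails there and $\mathsf{n}_0=0$ does not launch the induction by the generic step; instead one either checks directly that $\mathsf{v}_0$ annihilates $\si(C^\infty(M))$ (immediate from the normalization of the Haar system, since $\on{Av}_{EG}$ fixes pullbacks from $M$), or, as the paper does, writes down an explicit nonzero $\mathsf{n}_1$ with $\mathsf{v}_0=\mathsf{n}_1\circ\delta$.
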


  \begin{proof} As in \eqref{eq:averaging}, denote by 
  \[\on{Av}_{EG}\colon \sC(G\ltimes G)\to \sC(G\ltimes G)\]
   the total averaging operator with respect to the $K$-actions \eqref{eq:actions2}. This averaging operator is a $G$-equivariant (w.r.t. \eqref{eq:Gaction}) cochain map, restricted to the $G$-invariant subcomplex $\sC(G)$. Note that $\on{Av}_{EG}$ is a left inverse of the inclusion $\iota\colon\sC(G\ltimes G)^K\to \sC(G\ltimes G)$. We will see that $\iota\circ\on{Av}_{EG}$ is homotopic to the identity with a $G$-equivariant homotopy $\mathsf{n}\colon\sC^\bullet(G\ltimes G)\to \sC^{\bullet-1}(G\ltimes G) $, i.e.\ $\mathsf{v}:=\id_{\sC(G\ltimes G)}-\iota\circ\on{Av}_{EG}=[\mathsf{n}, \delta]$, thus concluding that on the $G$-invariant complexes $\on{Av}\colon \sC(G)\to  \sC(G)^{K}$ is a homotopy equivalence with $\iota\colon\sC(G)^{K}\to \sC(G)$ a homotopy inverse.
   
The construction of $\mathsf{n}$ is completely analogous to that of the proof of Theorem \ref{thm:normalized}, with $\on{Av}_{EG}$ playing the role of $N$. Adopting the description of $EG$ as in \eqref{eq:Ep}, one defines the base case $\mathsf{n}_1\colon\sC^1(G\ltimes G)\to \sC^0(G\ltimes G)$ by 
\[\mathsf{n}_1(f)(g)=\int_{a\in \tz^{-1}(t(g))\cap K}f(a^{-1}g,g)\ \mu_{\tz(g)}(a).\]
For the inductive case one defines $\mathsf{n}_p$ as in \eqref{eq:hom}. 

To conclude that  $\iota\colon \widetilde{\sC}^\bullet(G)^{K}\to \sC^\bullet(G)$ is a homotopy equivalence we use Theorem \ref{thm:normalized}.
  \end{proof}

\section{Relative cohomology of Lie algebroids}\label{section:rel-algebroid}
In this section, we introduce the Chevalley-Eilenberg complex of a Lie algebroid relative to a subalgebroid. In the case where the algebroid comes from a Lie groupoid $G$, we also define the Chevalley-Eilenberg complex of a Lie algebroid relative to a subgroupoid of $G$. 
These notions respectively extend the standard definitions of the $
\k$-basic and $K$-invariant
subcomplexes, when $\g$ is a Lie algebra, $
\k\subset\g$ is a subalgebra, and $K$ is a Lie subgroup integrating $\k$.

\subsection{The Chevalley-Eilenberg complex and its geometry}
Let $A\Ra M$ be a Lie algebroid with anchor map $\a\colon A\to TM.$ Let $\big(\sC(A),\d_{CE}\big)$ be the \emph{Lie algebroid complex} or \emph{Chevalley-Eilenberg complex}, with $p$-cochains 
\[ \sC^p(A)=\Gamma(\wedge ^p A^* ),\] 
and with Chevalley-Eilenberg differential $\d_{CE}\colon \sC^p(A)\to \sC^{p+1}(A)$ defined by
\begin{multline*}
\big(\d_{CE}\alpha\big)(\xi_1,\ldots,\xi_{p+1})=\sum_{i}(-1)^{i+1}\L_{\a(\xi_i)}\big(\alpha(\xi_1,\ldots, \hat \xi_i,\ldots,\xi_{p+1})\big)+\\
\sum_{i<j}(-1)^{i+j}\alpha([\xi_i,\xi_j],\xi_1,\ldots,\hat \xi_i,\ldots,\hat \xi_j,\ldots,\xi_{p+1}),
\end{multline*}
where $\alpha\in\Gamma(\wedge ^p A^* )$ and $\xi_1,\ldots,\xi_{p+1}\in\Gamma(A)$. For $\xi\in\Gamma(A)$, we denote by 
\[\iota_\xi\colon\sC^\bullet(A)\to \sC^{\bullet-1}(A),\ \ \L_\xi\colon\sC^\bullet(A)\to\sC^\bullet(A)\]
the operators given by contraction and the Lie derivative $\L_\xi=\d_{CE}\iota_\xi+\iota_\xi\d_{CE}$, respectively. On $\sC^0(A)=C^{\infty}(M)$, we have $\L_\xi=\L_{\a(\xi)}.$\\

Assume now that $A$ is the Lie algebroid of $G\rra M$. Thus, $A$ is the vector bundle whose sections are the left-invariant vector fields of $G$ (tangent to the $\tz$-fibers); for $\xi\in\Gamma(A)$ we denote by $\xi^L$ the corresponding left-invariant vector field. The anchor map $\a\colon A\to TM$ is characterized by the property that $\xi^L\sim_\sz -\a(\xi)$.  Denote by $\big(\Omega_\mathcal{F}(G),\d_{Rh}\big)$ the de Rham complex of $\tz$-foliated forms of $G$, and denote by $\left(\Omega_\mathcal{F}(G)^L,\d_{Rh}\right)$ the subcomplex of left-invariant forms. For $\alpha\in \sC(A)$ we denote by $\alpha^L$ the left-invariant foliated form on $G$. This operation intertwines the Chevalley-Eilenberg and the de Rham differentials, and the contraction $\iota_\xi$ and Lie derivative $\L_\xi$ operators of $\sC(A)$ with the usual contraction of vector fields $\iota_{\xi^L}$ and Lie derivative of vector fields $\L_{\xi^L}$, respectively. Thus
\begin{equation}\label{eq:forms}
\begin{array}{ccc}
\big(\sC(A),\d_{CE}\big) & \simeq & \big(\Omega_\mathcal{F}(G)^L,\d_{Rh}\big)\\[3pt]
\xi & \longleftrightarrow & \xi^L
\end{array}
\end{equation}
\begin{equation*}\xymatrix{  
  \sC(A)  \ar[r]^-{\iota_\xi } \ar@{-}[d]_{\simeq}&   \sC(A) \ar@{-}[d]^{\simeq}  \\
\Omega_\mathcal{F}(G)^L \ar[r]^-{ \iota_{\xi^L}}& \Omega_\mathcal{F}(G)^L 
 }\qquad\qquad
 \xymatrix{  
  \sC(A)  \ar[r]^-{\L_\xi } \ar@{-}[d]_{\simeq}&   \sC(A) \ar@{-}[d]^{\simeq}  \\
\Omega_\mathcal{F}(G)^L \ar[r]^-{ \L_{\xi^L}}& \Omega_\mathcal{F}(G)^L 
 }
 \end{equation*}
Actually, the Lie derivatives arise as the infinitesimal generators of an action of $\Bis(G)$ -- the group of bisections of $G$:
$$\Bis(G)=\{b\colon M\to G\mid \tz\circ b= \mathrm{id},\ \sz\circ b\mathrm{ \ a \ diffeomorphism}\};$$
if we only require that the map $b$ is defined in an open subset of $M$, then we are talking about a local bisection and we denote by $\Bis_\mathrm{loc}(G)$ the set of local bisections. The product and inverse of $\Bis(G)$ is given by
\begin{equation}\label{eq:prod_bi}
    (b\cdot a)(m)=b(m)a(\sz\circ b(m)),\ \ b^{-1}(m)=\iz(b((\sz\circ b)^{-1}(m))), \ \ \text{for }m\in M.\end{equation}
For local bisections, one has a partially defined product: If $a,b$ are local bisections and the domain of $a$ is equal to the image of $\sz\circ b$, the formula $b\cdot a$ defines a local bisection.\\
The group of bisections acts on the left on $\sC(A)$ by  
\begin{equation}\label{eq:action_Bis}b\cdot\alpha :=C_b^*(\alpha), \ \ \  b\in\Bis(G),  \ \alpha\in\sC(A),\end{equation}
where $C_b$ is the diffeomorphism of the Lie groupoid given by conjugation with $b$
\begin{equation}\label{eq:conjugation}C_b\colon G\to G, \ g\mapsto b(\tz(g))^{-1}gb(\sz(g)). \end{equation}
If $b$ is a local bisection, \eqref{eq:action_Bis} still makes sense but only as a section defined on the domain of $b$.
Note that $C_b^*$ in \eqref{eq:action_Bis} is just the pullback of the dual of the Lie algebroid isomorphism $\mathrm{Lie}(C_b)=dC_b|_A$ induced by $C_b$, and therefore,
\begin{equation}\label{eq:Bis_d} b\cdot(\d_{CE}\alpha)=\d_{CE}(b\cdot\alpha).\end{equation}
By differentiating the action \eqref{eq:action_Bis} one obtains an operator $\nabla\colon\Gamma(A)\times\sC^\bullet(A)\to \sC^\bullet(A)$ 
\begin{equation}\label{eq:operator}\nabla_\xi(\alpha)=\frac{d}{d\epsilon}\mid_{\epsilon=0}\varphi^\epsilon_\xi\cdot\alpha\ ;\end{equation}
here $\varphi_\xi^\epsilon$ is the family of bisections defined by the flow of $\xi^L$ as $\varphi_\xi^\epsilon(m)=\varphi_{\xi^L}^\epsilon(m),\ m\in M$. (Strictly speaking, the r.h.s. of \eqref{eq:operator} only makes sense locally. This is not a problem because everything still works at the level of local bisections.) Note that by the left-invariance of $\xi^L$,
\begin{equation}\label{eq:left-invariant}\varphi_{\xi^L}^\epsilon(g)=g \varphi_{\xi}^\epsilon(\sz(g)).\end{equation}

\begin{lemma}\label{cor:final} Let $G$ be a Lie groupoid with Lie algebroid $A$. For $\alpha\in\sC^\bullet(A)$ and $\xi\in \Gamma(A)$,
$$\L_\xi(\alpha)=\nabla_\xi(\alpha). $$
\end{lemma}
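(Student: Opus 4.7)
The plan is to apply the correspondence \eqref{eq:forms} and translate the identity into an equality of left-invariant $\tz$-foliated forms on $G$, which reduces the statement to a direct computation with flows and pullbacks. Under this dictionary, the left-hand side $\L_\xi\alpha$ corresponds to $\L_{\xi^L}\alpha^L = \tfrac{d}{d\epsilon}\big|_{\epsilon=0}(\varphi_{\xi^L}^\epsilon)^*\alpha^L$. For the right-hand side, since the $\Bis(G)$-action on $\sC(A)$ is by definition pullback along the Lie algebroid isomorphism $TC_b|_A$ induced by the Lie groupoid automorphism $C_b$, one checks directly that $(b\cdot\alpha)^L = C_b^*(\alpha^L)$ (both sides are left-invariant $\tz$-foliated forms and agree at units). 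Hence $\nabla_\xi\alpha$ corresponds to $\tfrac{d}{d\epsilon}\big|_{\epsilon=0}C_{\varphi_\xi^\epsilon}^*\alpha^L$, and the task becomes to prove
\[
\tfrac{d}{d\epsilon}\big|_{\epsilon=0}\,C_{\varphi_\xi^\epsilon}^*\alpha^L \;=\; \L_{\xi^L}\alpha^L.
\]

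The key step is the factorization
\[
C_{\varphi_\xi^\epsilon} = \Lambda_\epsilon \circ \varphi_{\xi^L}^\epsilon, \qquad \Lambda_\epsilon(h):=\varphi_\xi^\epsilon(\tz(h))^{-1}\cdot h,
\]
obtained by combining the defining formula \eqref{eq:conjugation} for $C_b$ with the flow identity \eqref{eq:left-invariant} and the fact that $\tz(\varphi_{\xi^L}^\epsilon(g))=\tz(g)$. Consequently
\[
C_{\varphi_\xi^\epsilon}^*\alpha^L = (\varphi_{\xi^L}^\epsilon)^*\,\Lambda_\epsilon^*\alpha^L,
\]
and it suffices to show that $\Lambda_\epsilon^*\alpha^L = \alpha^L$ for every $\epsilon$.

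This invariance is the main point, and it is resolved by unwinding the left-invariance of $\alpha^L$: on each $\tz$-fiber $\tz^{-1}(m)$, the map $\Lambda_\epsilon$ is left multiplication by the single element $\varphi_\xi^\epsilon(m)^{-1}\in G$, sending $\tz^{-1}(m)$ to $\tz^{-1}(\sz\circ\varphi_\xi^\epsilon(m))$; pullback of a left-invariant $\tz$-foliated form under such a left translation reproduces the same form, since by definition $L_a^*\alpha^L=\alpha^L$ for every $a\in G$. With $\Lambda_\epsilon^*\alpha^L=\alpha^L$ established, differentiating the equality $C_{\varphi_\xi^\epsilon}^*\alpha^L = (\varphi_{\xi^L}^\epsilon)^*\alpha^L$ at $\epsilon=0$ yields $\nabla_\xi\alpha^L = \L_{\xi^L}\alpha^L$, and translating back via \eqref{eq:forms} gives $\nabla_\xi\alpha = \L_\xi\alpha$. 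The only bookkeeping to watch is that $\Lambda_\epsilon$ does not preserve individual $\tz$-fibers but moves them; the left-invariance of $\alpha^L$ must therefore be used as a $G$-equivariance statement between different fibers, rather than as invariance within a single fiber.
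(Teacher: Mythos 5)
Your proof is correct and follows essentially the same route as the paper: translate everything into left-invariant $\tz$-foliated forms via \eqref{eq:forms}, use the flow identity \eqref{eq:left-invariant} together with left-invariance of $\alpha^L$ to identify $C_{\varphi_\xi^\epsilon}^*\alpha^L$ with $(\varphi_{\xi^L}^\epsilon)^*\alpha^L$, and differentiate at $\epsilon=0$. Your explicit factorization $C_{\varphi_\xi^\epsilon}=\Lambda_\epsilon\circ\varphi_{\xi^L}^\epsilon$ and the observation that $\Lambda_\epsilon^*\alpha^L=\alpha^L$ (as an equivariance between different $\tz$-fibers) simply make precise the step the paper compresses into ``one finds that''.
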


\begin{proof} 
One has that 
 \begin{equation}\label{eq:ref}( \varphi_{\xi}^\epsilon\cdot \alpha)|_m=( \varphi_{\xi^L}^\epsilon)^*(\alpha^L)|_m, \ \ \textrm{for }m\in M.\end{equation}
Indeed, by \eqref{eq:left-invariant}, the r.h.s.\ of the above equation evaluated on a vector $X\in T_mG\cap\ker d\tz$ equals
\begin{equation*}
\alpha^L(d\mz(X,d\varphi^\epsilon_\xi(d\sz(X))))=\alpha^L(d\mz(0_{\varphi^\epsilon_\xi(m)^{-1}},d\mz(X,d\varphi^\epsilon_\xi(d\sz(X))))),
\end{equation*}
where in the equality we use the left-invariance of $\alpha^L$,
while the l.h.s.\ evaluated on $X$ equals
\begin{equation*}
\alpha(d\mz(d\iz(d\varphi^\epsilon_\xi(d\tz(X))),d\mz(X,d\varphi^\epsilon_\xi(d\sz(X)))))=\alpha(d\mz(0_{\varphi^\epsilon_\xi(m)^{-1}},d\mz(X,d\varphi^\epsilon_\xi(d\sz(X))))).
\end{equation*}
Since $\alpha_m=\alpha^L_m$ for any $m\in M$, \eqref{eq:ref} follows.
Hence, by the usual formula that uses flows of vector fields,
$$\nabla_\xi(\alpha)(m)=(\L_{\xi^L}\alpha^L)(m), \ \ \textrm{for }m\in M.$$ 
The result follows by \eqref{eq:forms}.
 \end{proof}

An alternative way to understand the action of $\Bis(G)$ on $\sC(A)$ and the operator $\nabla$, is to consider the Lie groupoid of first jets of local bisections of $G$
\begin{equation*}
    J^1G\rra M.
\end{equation*}
The structural maps of $J^1G$ are given by 
\[\sz(j_m^1b)=\sz(b(m)),\quad \tz(j_m^1b)=\tz(b(m))=m, \quad j_m^1b\cdot j_{m'}^1a=j^1_{m}(b\cdot a),\] 
where $m':=\sz(b(m))$ and $b\cdot a$ the product of local bisections \eqref{eq:prod_bi}.
The natural projection $J^1G\to G,\ j^1_mb\mapsto b(m)$ is a morphism of Lie groupoids. The Lie algebroid $A$ of $G$ is a representation of $J^1G$: An element $j^1_mb\in J^1G$ acts on the right on $\xi\in A_m$ by
\begin{equation}\label{eq:jet_action}\xi\cdot j^1_mb= d\mz(0_{b(m)^{-1}},d\mz(\xi,d_mb(-\a(\xi))))\in A_{\sz(b(m))}.\end{equation}
Note that \eqref{eq:jet_action} is equal to $d_mC_b(\xi)$ (see \eqref{eq:conjugation}).

Consider the induced representations of $J^1G$ on $A^*$, given by the dual (left) action on $A^*$, and its exterior powers. The action \eqref{eq:action_Bis} of a (local) bisection $b$ on an element $\alpha\in \sC(A)$ can now be understood in terms of these representations as
\begin{equation}\label{eq:equal}
    (b\cdot \alpha)_m=j^1_mb\cdot \alpha_{\sz(b(m))},\ m\in M.
\end{equation}
Fix an integer $p \geq 0$. By differentiating the representation of $J^1G$ on $\Lambda^p A^*$, one obtains the infinitesimal representation $\tilde\nabla:\Gamma(J^1A)\times \sC^p(A)\to \sC^p(A)$ of $J^1A$ -- the Lie algebroid of $J^1G$. In analogy with \eqref{eq:equal}, the defining formula \eqref{eq:operator} for the action of $\Gamma(A)$ on $\sC(A)$ can now be interpreted in terms of these infinitesimal representations as 
\[\nabla_\xi(\alpha)=\tilde \nabla_{j^1\xi}(\alpha), \quad \mathrm{for }\ \xi\in \Gamma(A), \ \alpha\in \sC^\bullet(A).\]


\subsection{The Lie algebroid complex relative to a subgroupoid}\label{subsection:3.2}

Assume that  $A$ is the Lie algebroid of $G$ and that $B$ is the Lie algebroid of a Lie subgroupoid $K\subset G$. Thus, $B\subset A$ is a Lie subalgebroid over the same base $M$. The $B$-horizontal subspace is defined as 
\begin{equation}\label{eq:hor}
    \sC(A)_{B\mathrm{-hor}}:=\cap_{\eta\in\Gamma(B)}\ker\iota_\eta,\end{equation}
which, in degree $p$, we can think of naturally as the space of sections of $\wedge^p(A/B)^*$. 

To talk about the $K$-invariant elements of $\sC(A)_{B\mathrm{-hor}}$, we need the following digression: When restricting to $J^1K$ the representation \eqref{eq:jet_action} on $A$, it follows that $B$ is a subrepresentation, and therefore, the quotient $A/B$ becomes a representation of $J^1K$. Moreover, if $j^1_ma,j^1_mb\in J^1K$ are so that $a(m)=b(m)$, then
\begin{equation*}\label{eq:descend}
    \xi\cdot j^1_ma=\xi\cdot j^1_mb, \quad\xi\in (A/B)_m.
\end{equation*}
This implies that the action of $J^1K$ comes from a representation of $K$ on $A/B$. By taking the representation on the dual $(A/B)^*$ and its exterior powers, we obtain $K$-representations on $\wedge^\bullet(A/B)^*.$
In terms of bisections, given $k\in K$ and $\alpha\in \sC^\bullet(A)_{B\mathrm{-hor}}$,
\begin{equation}\label{eq:action_K}k\cdot\left(\alpha_{\sz(k)}\right)=\big(b\cdot \alpha\big)_{\tz(k)},\end{equation}
 where $b\in\Bis_{\mathrm{loc}}(K)$ is any local bisection with the property that $b(\tz(k))=k$, and the r.h.s.\ is as in \eqref{eq:action_Bis}.

\begin{definition}The {\it Lie algebroid complex relative to $K$} is the $K$-basic subcomplex defined as
\begin{equation}\label{eq:basic}\sC(A)_{K\mathrm{-basic}}:=\{\alpha\in\sC(A)_{B\mathrm{-hor}}\mid k\cdot\alpha_{\sz(k)}=\alpha_{\tz(k)}\mathrm{ \ for\  all\ } k\in K\}.\end{equation}
The {\em cohomology of $A$ relative to $K$} is the cohomology of $\big(\sC(A)_{K\mathrm{-basic}},\d_{CE}\big)$; we denote it by $H^\bullet(A,K)$. If $K$ is clear from the context, we refer to this cohomology simply as the {\em relative cohomology of $A$}.
\end{definition}
We postpone the proof that \eqref{eq:basic} defines a subcomplex to Proposition \ref{prop:subcomplex}.


\subsection{The Lie algebroid complex relative to a subalgebroid}
Consider now a Lie subalgebroid $B\subset A$ over the same base $M$. The $B$-horizontal subspace \[\sC(A)_{B\mathrm{-hor}}\subset \sC(A)\] is defined as in \eqref{eq:hor} (which we think of as the space of sections of $\wedge^\bullet(A/B)^*$).
By the identity $[\L_\eta,\iota_\xi]=\iota_{[\eta,\xi]}$, one checks that the Lie derivative induces $B$-representations on $\wedge^\bullet(A/B)^*$ with flat connections
\begin{equation}\label{eq:ad}\L\colon\Gamma(B)\times \sC^\bullet(A)_{B\mathrm{-hor}}\to\sC^\bullet(A)_{B\mathrm{-hor}},\quad (\eta,\alpha)\mapsto\L_\eta(\alpha).\end{equation}
Note that for $p=1$, $\L$ coincides with the dual of the Bott connection $$\Gamma(B)\times\Gamma(A/B)\to\Gamma(A/B), \quad(\eta,\xi\mod B)\to [\eta,\xi]\mod B.$$ 
By Lemma \ref{cor:final} if $A$ and $B$ are the Lie algebroids of $G$ and of a Lie subgroupoid $K\subset G$ respectively, then \eqref{eq:ad} are the infinitesimal representations of the $K$-actions \eqref{eq:action_K}. The $B$-invariant subspace is defined as \[\sC(A)_{B\mathrm{-inv}}:=\cap_{\eta\in\Gamma(B)}\ker\L_\eta.\]
Since $\L_\eta=\d_{CE}\iota_\eta+\iota_\eta\d_{CE}$, it follows that the $B$-invariant is a subcomplex of $\big(\sC(A),\d_{CE}\big)$. The same identity shows that the following subspace is also a subcomplex:

\begin{definition}The {\it Lie algebroid complex relative to $B$} is the $B$-basic subcomplex defined as the intersection of the $B$-horizontal and the $B$-invariant subspaces
\[\sC(A)_{B\mathrm{-basic}}:=\{\alpha\in \sC(A)\mid \iota_\eta\alpha=0\mathrm{ \ and\ } \L_\eta(\alpha)=0\mathrm{ \ for\  all\ } \eta\in \Gamma(B)\}.\]
The {\em cohomology of $A$ relative to $B$} is the cohomology of $\big(\sC(A)_{B\mathrm{-basic}},\d_{CE}\big)$; we denote it by $H^\bullet(A,B)$. If $B$ is clear from the context, we refer to this cohomology simply as the {\em relative cohomology of $A$}.
\end{definition}

\begin{remark}\label{rmk:wedge}
The usual wedge product turns $\big(\sC(A)_{K-\mathrm{basic}},\d_{CE}\big)\subset \big(\sC(A),\d_{CE}\big)$ into a DG subalgebra and $H^\bullet(A,K)$ (and $H^\bullet(A)$) into a graded algebra. Analogous result holds for $H^\bullet(A,B)$.\end{remark}

\begin{remark}\label{rmk:standard_basic_algebra} For Lie algebras, our definition for the Lie algebroid complex of $\g$ relative to a Lie subalgebra $\h$ coincides with the standard basic subcomplex of the Chevalley-Eilenberg complex $\sC(\g)$. More precisely, the Lie algebroid complex relative to $\h$ coincides with the $\h$-basic subcomplex $\sC(\g)_{\h-\mathrm{basic}}$  of  $\sC(\g)$ defined using the coadjoint action of $\h$ on $\sC(\g)$. Similarly, when $\g$ is the Lie algebra of a Lie group $G$, and $K\subset G$ is a Lie subgroup, our $K$-basic subcomplex coincides with the $K$-basic subcomplex $\sC(\g)_{K-\mathrm{basic}}$ of $\sC(\g)$ defined using the coadjoint action of $K$ on $\sC(\g)$.
\end{remark}

In the case where $A$ and $B$ are the Lie algebroids of the Lie groupoid $G$ and a Lie subgroupoid $K\subset G$ respectively, the following result explains the relation between the two relative Lie algebroid complexes.
 
 \begin{proposition}\label{prop:subcomplex} Let $K$ be a Lie subgroupoid of $G$ and denote by $B$ and $A$ their Lie algebroids, respectively. Then 
$ \sC(A)_{K\mathrm{-basic}}$
is a subcomplex of $\big(\sC(A)_{B\mathrm{-basic}},\d_{CE}\big)$. Moreover, if $K$ has connected $\tz$-fibers, then they are the same.
\end{proposition}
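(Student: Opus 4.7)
My plan is to split the proof into three steps: (i) the inclusion $\sC(A)_{K\mathrm{-basic}} \subseteq \sC(A)_{B\mathrm{-basic}}$; (ii) the fact that $\d_{CE}$ preserves $\sC(A)_{K\mathrm{-basic}}$; and (iii) the reverse inclusion under the connectedness hypothesis. The main tool throughout is Lemma \ref{cor:final}, which identifies the infinitesimal action of bisections with the Lie derivative.

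For (i), let $\alpha \in \sC(A)_{K\mathrm{-basic}}$. $B$-horizontality is immediate from the definition. For $B$-invariance, fix $\eta \in \Gamma(B)$. Since $B$ is the Lie algebroid of $K$, the left-invariant vector field $\eta^L$ on $G$ is tangent to $K$, so its flow $\varphi_\eta^\epsilon$ takes values in $K$ and defines (for $\epsilon$ small) a bisection of $K$. Applying \eqref{eq:action_K} with $k = \varphi_\eta^\epsilon(m)$ and noting that $\tz\circ\varphi_\eta^\epsilon=\id_M$, the $K$-invariance of $\alpha$ translates to $\varphi_\eta^\epsilon \cdot \alpha = \alpha$. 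Differentiating at $\epsilon = 0$ and invoking Lemma \ref{cor:final} yields $\L_\eta \alpha = \nabla_\eta \alpha = 0$.

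For (ii), I use the identity $\iota_\eta \d_{CE} = \L_\eta - \d_{CE}\iota_\eta$ to conclude $\iota_\eta \d_{CE}\alpha = 0$ for $\eta \in \Gamma(B)$, so $\d_{CE}\alpha$ is $B$-horizontal; the $K$-invariance of $\d_{CE}\alpha$ follows directly from \eqref{eq:Bis_d}, since $b \cdot \d_{CE}\alpha = \d_{CE}(b\cdot\alpha) = \d_{CE}\alpha$ for every (local) bisection $b$ of $K$. For (iii), assume connected $\tz$-fibers and let $\alpha \in \sC(A)_{B\mathrm{-basic}}$. Fix $k \in K$ with $m := \tz(k)$ and connect the unit $m \in K_m$ to $k$ by a smooth path $\gamma : [0,1] \to K_m$. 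Because the values $\eta^L(\gamma(t))$ with $\eta \in \Gamma(B)$ span $T_{\gamma(t)}K_m$, one can choose a smooth time-dependent section $\eta_t \in \Gamma(B)$ such that $\dot\gamma(t) = \eta_t^L(\gamma(t))$. The time-dependent flow of $\eta_t^L$ produces a smooth family of local bisections $b_t$ of $K$ near $m$ with $b_0 = \id_M$ and $b_1(m) = k$. By Lemma \ref{cor:final} and $B$-invariance, $\L_{\eta_t}\alpha = \nabla_{\eta_t}\alpha = 0$ for every $t$, so the standard ODE identity $\frac{d}{dt}(b_t\cdot\alpha) = b_t \cdot \L_{\eta_t}\alpha = 0$ gives $b_t\cdot \alpha = \alpha$ throughout. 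Evaluating at $m$ yields $k\cdot \alpha_{\sz(k)} = (b_1\cdot\alpha)_m = \alpha_m = \alpha_{\tz(k)}$, which is exactly the $K$-invariance required.

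The main obstacle I anticipate is the integration step in (iii): constructing the time-dependent section $\eta_t$ globally (via partitions of unity) and the associated family of local bisections $b_t$ with sufficient domain to reach $k$, and carefully justifying the ODE identity $\frac{d}{dt}(b_t\cdot\alpha) = b_t \cdot \L_{\eta_t}\alpha$ in this time-dependent, local-bisection setting. Once this is in place, the connectedness hypothesis enters only through the ability to choose the path $\gamma$.
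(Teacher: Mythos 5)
Your proof is correct and follows essentially the same route as the paper: parts (i) and (ii) are exactly the paper's argument (Lemma \ref{cor:final} for the infinitesimal invariance, equation \eqref{eq:Bis_d} for compatibility with $\d_{CE}$), and part (iii) rests on the same key computation, namely that the derivative of $k\mapsto k\cdot\alpha_{\sz(k)}$ in the direction $\eta^L$ is $k\cdot(\L_\eta\alpha)_{\sz(k)}=0$.

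The one difference worth noting is the packaging of step (iii). The paper defines the bundle map $e\colon K\to \wedge^\bullet(A/B)^*$, $k\mapsto k\cdot\alpha_{\sz(k)}$, checks that $d_k e$ vanishes on $\ker(d\tz)$ (since the $\eta^L$ span the tangent spaces to the $\tz$-fibers), and concludes that $e$ is constant on each connected $\tz$-fiber. This completely sidesteps the obstacles you anticipate: no time-dependent section $\eta_t$, no family of local bisections $b_t$, and no ODE identity need to be constructed --- ``vanishing fiberwise differential implies fiberwise constant'' does all the work. Your path-integration version is correct (the identity $\frac{d}{dt}(b_t\cdot\alpha)=b_t\cdot\L_{\eta_t}\alpha$ does hold, because $b_{t+s}=b_t\cdot\varphi^s_{\eta_t}$ and $C_{b\cdot a}^*=C_b^*\circ C_a^*$), but it is strictly more laborious. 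One notational caution: your $K_m$ must denote the $\tz$-fiber $\tz^{-1}(m)\cap K$, not the isotropy group $\sz^{-1}(m)\cap\tz^{-1}(m)\cap K$; the arbitrary $k\in K$ with $\tz(k)=m$ lies only in the former, the connectedness hypothesis is on $\tz$-fibers, and the left-invariant vector fields $\eta^L$ span the tangent spaces of the $\tz$-fibers rather than of the isotropy groups.
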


\begin{proof} 
By Lemma \ref{cor:final} and \eqref{eq:Bis_d}, the inclusion $\sC(A)_{K\mathrm{-basic}}\subset \sC(A)_{B\mathrm{-basic}} $ is a subcomplex. 
To see that $\tz$-connectedness implies that they are the same, let $\alpha\in \sC(A)_{B\mathrm{-basic}}$ and set
\[e\colon K\to \wedge^\bullet(A/B)^*,\ \ k\mapsto k\cdot\alpha_{\sz(k)}.\]
Considering $\tz\colon K\to M$ as a bundle, $e$ defines a bundle map covering the identity of $M$.
Note that for any $m\in M\subset K$ and $\eta\in \Gamma(B)$,
\[d_me(\eta)=\frac{d}{d\epsilon}\mid_{\epsilon=0}{\varphi_{\eta}^{\epsilon}(m)}\cdot(\alpha)=\L_\eta(\alpha)_m=0.\]
For any other $k\in K$, since $\varphi_{\eta^L}^{\epsilon}(k)=k\cdot \varphi_{\eta}^{\epsilon}(\sz(k))$,
\[d_ke(\eta^L)=\frac{d}{d\epsilon}\mid_{\epsilon=0}{\varphi_{\eta^L}^{\epsilon}(k)}\cdot\alpha=\frac{d}{d\epsilon}\mid_{\epsilon=0}k\cdot\big({\varphi_{\eta}^{\epsilon}(\sz(k))}\cdot\alpha\big)=k\cdot\big(\L_\eta(\alpha)_{\sz(k)}\big)=0.\] 
 This implies that $de=0$ on $\ker(d\tz)$ and therefore, by connectedness of the $\tz$-fibers, it is constant along the fibers, i.e.\ for $k\in K,$
 $$k\cdot\alpha_{\sz(k)}=\tz(k)\cdot\alpha_{\tz(k)}=\alpha_{\tz(k)}.$$\end{proof}

\section{The transitive case}\label{transitive}

In this section, we construct extension maps from the basic subcomplexes of the isotropy Lie groups and isotropy Lie algebras (see Remark \ref{rmk:standard_basic_algebra}), to the relative cochain complexes of transitive Lie groupoids and Lie algebroids respectively. The outcome is that in the transitive case, the relative cochain complexes of Lie groupoids and Lie algebroids are naturally isomorphic to the cochain basic subcomplexes of their isotropy Lie groups and Lie algebras respectively (see Theorems \ref{thm:ext} and \ref{thm:ext_2}).

Throughout this section, let $G$ be a Lie groupoid over $M$ and let $K\subset G$ be a {\it transitive} Lie subgroupoid, i.e., it has only one orbit: $\tz(\sz^{-1}(m))=M$, for any $m\in M$. Hence, $G$ is also transitive.

\subsection{Lie groupoid cohomology relative to a transitive subgroupoid}\label{sec:transitive_groupoid}

Fix an element  $z\in M$ and let $G_z:= \sz^{-1}(z)\cap \tz^{-1}(z)$, $K_z:=G_z\cap K$,  be the isotropy groups of $G$ and $K$, respectively. Because $K$ is transitive, $\tz\colon P\to M$, $P:=\sz^{-1}(z)\cap K$, is a principal $K_z$-bundle with right action given by right multiplication. Consider the principal $K_z^{p+1}$-bundle given by 
\[(\tz,\ldots,\tz)\colon P^{p+1}\to M^{p+1},\]
and pull it back to $B_pG$ via the map $(g_1,\ldots,g_p)\mapsto(m_0,\ldots,m_p)$:
\[B_pG\times_{M^{p+1}}P^{p+1}\to B_pG.\]
Any function $f\in  \sC^p(G_z)^{K_z}$ defines a smooth function $F$ on $B_pG\times_{M^{p+1}}P^{p+1}$ by the formula
\[F(g_1,\ldots,g_p;k_0,\ldots,k_n)\coloneqq f(k_0^{-1}g_1k_1,\ldots, k_{p-1}^{-1}g_pk_p).\]
In fact, $F$ is $K_z^{p+1}$-invariant as $f$ is invariant w.r.t.\ the $p+1$ commuting $K_z$-actions \eqref{eq:actions}. Hence, $F$ is the pullback of a smooth function of the base $B_pG$, which we call $\mathrm{ext}(f)$:

\begin{definition}\label{def:ext}
    Fix $z\in M$ and let $f\in  \sC^p(G_z)^{K_z}$. We define $\mathrm{ext}(f)\in\sC^p(G)$ by
    \begin{equation}\label{eq:ext}
    \mathrm{ext}(f)(g_1,\ldots,g_p)\coloneqq f(k_0^{-1}g_1k_1,\ldots, k_{p-1}^{-1}g_pk_p), \ \ (g_1,\ldots, g_p)\in B_pG;
    \end{equation}
  here $k_0,\ldots,k_n\in K$ are {\em any} elements satisfying $\sz(k_i)=z$ and $\tz(k_i)=m_i$ for all $i=0,\ldots,p$.
\end{definition}
To illustrate the usefulness of Definition \ref{def:ext}, we state the following result:
  
  \begin{theorem}\label{thm:ext} Suppose $K$ is a transitive Lie subgroupoid of the Lie groupoid $G$. Fix $z\in M$. For any $p$ and any $f\in  \sC^p(G_z)^{K_z}$, the smooth function $\mathrm{ext}(f)$ is an element of $\sC^p(G)^{K}$. Moreover, the extension operator 
    \[\mathrm{ext}\colon \sC^\bullet(G_z)^{K_z}\to \sC^\bullet(G)^{K}\]
is an isomorphism of complexes, which respects the cup product (see Remark \ref{rmk:cup}). Finally, the restriction map
       \[r\colon \sC^\bullet(G)^{K}\to \sC^\bullet(G_z)^{K_z}\]
       is a cochain map inverse to $\mathrm{ext}$, which also respects the cup product. Analogous results hold for $\widetilde{\sC}^\bullet(G_z)^{K_z}$ and $\widetilde{\sC}^\bullet(G)^{K}$.
  \end{theorem}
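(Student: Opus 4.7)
The plan is to establish the theorem in three stages: (i) that $\mathrm{ext}(f)$ is well-defined and lies in $\sC^p(G)^K$; (ii) that $\mathrm{ext}$ and $r$ are mutually inverse; and (iii) that both are cochain maps compatible with the cup product, with the normalized statement following from preservation of the vanishing condition. The guiding observation is that the ambiguity in the auxiliary elements $k_i \in K$ and the effect of the $p+1$ $K$-actions \eqref{eq:actions} on $B_p G$ both reduce, after telescoping cancellation, to the $p+1$ $K_z$-actions on $B_p G_z$ under which $f$ is invariant by hypothesis.

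First, for well-definedness, I would note that two valid choices $k_i$ and $k_i' = k_i a_i$ with $a_i \in K_z$ yield
\[(k_{i-1}')^{-1} g_i k_i' = a_{i-1}^{-1}\big(k_{i-1}^{-1} g_i k_i\big) a_i,\]
so replacing the $k_i$'s by the $k_i'$'s corresponds exactly to acting on $(k_0^{-1}g_1 k_1,\ldots,k_{p-1}^{-1}g_p k_p)$ by $(a_0,\ldots,a_p) \in K_z^{p+1}$ via \eqref{eq:actions}. Smoothness follows from transitivity: $\tz\colon \sz^{-1}(z)\cap K \to M$ is a surjective submersion admitting smooth local sections, and well-definedness lets us glue. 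For $K$-invariance under the $i$-th action with $a\in K$, I would redistribute only the $i$-th auxiliary element (replacing $k_i$ by $a^{-1}k_i$, with the analogous modification at the endpoints $i=0$ and $i=p$), so that the $a$'s cancel telescopically inside the argument of $f$ and one recovers the original tuple.

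For invertibility, $r$ lands in $\sC^p(G_z)^{K_z}$ because the $K_z$-actions on $B_p G_z$ are the restrictions of the $K$-actions. Choosing $k_i$ to be the unit at $z$ whenever $m_i=z$ gives $r\circ \mathrm{ext} = \id$, while $\mathrm{ext}\circ r = \id$ follows by applying the $K$-invariance of $F\in\sC^p(G)^K$ successively with parameters $k_0,\ldots,k_p$. Since $B_\bullet G_z \hookrightarrow B_\bullet G$ is a simplicial subobject (isotropy is closed under composition and contains the units over $z$), the restriction $r$ automatically commutes with the face maps \eqref{eq:face} and hence with $\delta$; as $\mathrm{ext}$ is its inverse, it too is a cochain map. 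Cup product compatibility is immediate from the factorized form of \eqref{eq:ext}. For the normalized version, the key point is that if $g_i$ is a unit of $G$ (so $m_{i-1}=m_i$), I may take $k_{i-1}=k_i$, whence $k_{i-1}^{-1} g_i k_i$ is a unit in $G_z$ and $f$ vanishes; the analogous statement for $r$ is automatic. The only real obstacle is the index bookkeeping in the $K$-invariance step of stage (i), which is purely mechanical once the correspondence between the ambiguity of the $k_i$'s and the $K_z$-action is identified.
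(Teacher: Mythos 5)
Your proposal is correct and follows exactly the route the paper intends: the paper states Theorem \ref{thm:ext} as a ``direct consequence'' of the definition \eqref{eq:ext}, the only recorded justification being that the ambiguity in the $k_i$'s is absorbed by the $p+1$ commuting $K_z$-actions, which is precisely your telescoping observation; your verifications of smoothness, $K$-invariance, mutual inversion, compatibility with $\delta$ and $\cup$ are the routine checks the paper omits. One cosmetic point: the normalization condition in Remark \ref{rmk:normalized} is vanishing whenever some $g_i\in K$ (not merely a unit), but since these two conditions coincide on $K$-invariant cochains (as used in the proof of Theorem \ref{thm:normalized}), your argument for the normalized statement goes through unchanged.
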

  
  \begin{proof} 
  All the claims can be checked by a direct calculation. To give an example here, we prove that $\mathrm{ext}(f)$ is an element of $\sC^p(G)^{K}$. Let $(g_1,\ldots,g_p)\in B_pG$ and let $a_0,\ldots,a_p\in K$ be any elements so that $\tz(a_i)=m_i$ for all $i=0,\ldots,p$. Then,
  \begin{align}\label{eq:comp_equiv}
  \mathrm{ext}(f)(a_0^{-1}g_1a_1,\ldots,a_{p-1}^{-1}g_pa_p)&=f(k_0^{-1}(a_0^{-1}g_1a_1)k_1,\ldots, k_{p-1}^{-1}(a_{p-1}^{-1}g_pa_p)k_p)
  \end{align}
  where $k_0,\ldots,k_p\in K$ are any elements so that $\sz(k_i)=z$ for all $i=1,\ldots,p$, and $\tz(k_0)=\tz(a_0^{-1}g_1a_1)=\sz(a_0)$, $\tz(k_i)=\sz(a_{i-1}^{-1}g_ia_i)=\sz(a_i)$ for all $i=1,\ldots,p$. But the r.h.s.\ of equation \eqref{eq:comp_equiv} is equal to
  \begin{align*}
      f((a_0k_0)^{-1}g_1(a_1k_1),\ldots, (a_{p-1}k_{p-1})^{-1}g_p(a_pk_p))=\mathrm{ext}(f)(g_1,\dots, g_p).
\end{align*}
The last equality follows by the definition \eqref{eq:ext} of $\mathrm{ext}(f)$, as $a_0k_0,\ldots, a_pk_p\in K$ are elements satisfying that $\sz(a_ik_i)=\sz(k_i)=z$ and $\tz(a_ik_i)=\tz(a_i)=m_i$ for all $i=0,\ldots,p$. This proves that $\mathrm{ext}(f)$ is invariant w.r.t.\ the $p+1$ $K$-actions \eqref{eq:actions}, i.e.\ $\mathrm{ext}(f)$ belongs to $\sC^p(G)^{K}$.
\end{proof}  

\subsection{Lie algebroid cohomology relative to a transitive subgroupoid}\label{sec:transitive_algebroid}
Let $A$ be the Lie algebroid of $G$, and $B$ the Lie algebroid the subgroupoid $K\subset G$. As $G$ and $K$ are transitive, the anchor map $\a$ of $A$, and its restriction to $B$, is point-wise surjective onto $TM$. Therefore, the kernels \[\g:=\ker\a\subset A,\ \ \h:=B\cap\ker\a\subset B\] are bundles of Lie algebras, fitting into the following exact sequences of Lie algebroids over $M$
\begin{eqnarray*}
0\longrightarrow\g\longrightarrow A\overset{\a}{\longrightarrow}TM\longrightarrow0 \ \ \ \textrm{and}\ \ \ 
0\longrightarrow\h\longrightarrow B\overset{\a}{\longrightarrow}TM\longrightarrow0.
\end{eqnarray*}
Hence, the inclusion $\g\subset A$ gives the identification
\[\g/\h\simeq A/B,\]
and therefore, the restriction map $\big(\sC(A),\d_{CE}\big)\to \big(\sC(\g),\d_{CE}\big) $ induces a natural identification of the $B$-horizontal subspace of $\sC(A)$ with the $\h$-horizontal subspace of $\sC(\g)$: 
\begin{equation}\label{eq:hor_2}\sC(A)_{B\mathrm{-hor}}\simeq\sC(\g)_{\h\mathrm{-hor}}.\end{equation}

We also have a $K$-basic subcomplex of $\sC(\g)$ defined as follows. The restriction to $J^1K$ of the representation \eqref{eq:jet_action} on $A$, sends $\g$ to $\g$, and $\h$ to $\h$, hence $\g/\h$ becomes a representation of $J^1K$. As in the digression presented at the beginning of Subsection \ref{subsection:3.2}, this action actually comes from a representation of $K$ on $\g/\h$. By taking the dual representation on $(\g/\h)^*$ and its exterior powers, we obtain actions of $K$ on $\wedge^\bullet(\g/\h)^*$ analogous to the actions \eqref {eq:action_K}. We define the {\it $K$-basic subcomplex} of $\sC(\g)$ as 
$$\sC(\g)_{K-\mathrm{basic}}=\{\alpha\in\sC(\g)_{\h\mathrm{-hor}}\mid k\cdot \alpha_{\sz(g)}=\alpha_{\tz(g)}, \ k\in K\}.$$
From the construction it is clear that the natural identification \eqref{eq:hor_2} restricts to their $K$-basic subcomplexes:
\begin{equation}\label{eq:identification}
\sC(A)_{K\mathrm{-basic}}\simeq\sC(\g)_{K-\mathrm{basic}}.
\end{equation}\\

The Lie algebras of the isotropy groups $K_z$ and $G_z$ are $\h_{z}$ and $\g_z$ respectively. Given\footnote{By Remark \ref{rmk:standard_basic_algebra} , this is the $K$-basic subcomplex (w.r.t.\ coadjoint action of $K$) of the Chevalley-Eilenberg complex of the Lie algebra $\g_z$.} $\alpha\in \sC(\g_z)_{K_z\mathrm{-basic}}$, we define $\mathrm{ext}(\alpha)\in \sC(\g)_{\k-\mathrm{hor}}$ as 
\begin{equation}\label{eq:ext_algebroid}
\mathrm{ext}(\alpha)_m:=k\cdot\alpha,\quad m\in M,
\end{equation}
where $k\in K$ is {\em any} element with the property that $\sz(k)=z$ and $\tz(k)=m$. 
Once again, in terms of bisections,
\begin{equation}\label{eq:ext_bisection}
    \mathrm{ext}(\alpha)_m=C_b^*(\alpha),
    \end{equation}
where $b$ is any local bisection of $K$ so that $\sz(b(m))=z$, and $C_b$ is defined by \eqref{eq:conjugation}.
Among other things, the next result shows that $\mathrm{ext}(\alpha)$ is a well-defined smooth section.

\begin{proposition}\label{prop:ext} For $\mathrm{ext}\colon \sC^\bullet(\g_z)_{K_z\mathrm{-basic}}\to \sC^\bullet(\g)_{\k-\mathrm{hor}}$ as above, we have the following:
\begin{enumerate}[leftmargin=*]
\item $\mathrm{ext}$ is well-defined, i.e.,\ $\mathrm{ext}(\alpha)_m$ does not depend on the choice of $k\in K$ as above,
\item $\mathrm{ext}$ is a smooth section of $\wedge^\bullet(\g/\h)^*$,
\item the image of $\mathrm{ext}$ is contained in $\sC(\g)_{K-\mathrm{basic}}$, and
\item $\mathrm{ext}$ is a map of complexes, i.e.,\ $\mathrm{ext}\circ\d_{CE}=\d_{CE}\circ \mathrm{ext}.$
\end{enumerate}
\end{proposition}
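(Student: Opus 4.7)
The plan rests on three observations that reduce the three assertions to essentially formal computations. Observation (i): $\g=\ker\a$ has vanishing anchor, so its bracket is $C^\infty(M)$-bilinear and $\g$ is a bundle of Lie algebras; in particular the Chevalley--Eilenberg differential on $\sC(\g)$ is purely fiberwise. Observation (ii): each $C_b$ with $b\in\Bis(K)$ is a Lie groupoid diffeomorphism of $G$ preserving $K$, so the bundle map $dC_b$ preserves $\g$ and $\h$; whenever $\sz(b(m))=z$, the restriction $dC_b|_m\colon \g_m\to \g_z$ is a Lie algebra homomorphism sending $\h_m$ to $\h_z$. Observation (iii): a direct calculation from the bisection product of \eqref{eq:action_Bis} gives the anti-homomorphism identity $C_{b_1}\circ C_b=C_{b\cdot b_1}$.

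For (1), given $b,b'\in\Bis(K)$ with $\sz(b(m))=\sz(b'(m))=z$, the element $h:=b(m)^{-1}b'(m)$ lies in $K_z$, and one checks that $C_{b'}|_{G_m}=\mathrm{Ad}(h^{-1})\circ C_b|_{G_m}$, where $\mathrm{Ad}(h^{-1})$ denotes inner conjugation on $G_z$. Linearizing and pulling back yields $(dC_{b'}|_m)^*\alpha=(dC_b|_m)^*\mathrm{Ad}(h^{-1})^*\alpha$; since $\mathrm{Ad}(h)$ preserves $\h_z$ and hence descends to $\g_z/\h_z$, the $K_z$-invariance of $\alpha$ collapses this to $(dC_b|_m)^*\alpha$. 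Smoothness of $\mathrm{ext}(\alpha)$ as a section of $\wedge^\bullet(\g/\h)^*$ reduces to the local existence of smooth bisections of $K$ through any prescribed arrow, which is standard since $\tz\colon K\to M$ is a surjective submersion and $K$ is transitive.

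For (2), $\h$-horizontality is immediate from (ii): for $\xi\in\h_m$, $\iota_\xi(dC_b|_m)^*\alpha=(dC_b|_m)^*\iota_{dC_b(\xi)}\alpha=0$ since $dC_b(\xi)\in\h_z$ and $\alpha$ is $\h_z$-horizontal. For $K$-equivariance, let $k\in K$, $m=\sz(k)$, $m'=\tz(k)$, and pick $b\in\Bis(K)$ with $b(m')=k$ together with an auxiliary $b_1\in\Bis(K)$ witnessing $\mathrm{ext}(\alpha)_m$. Unravelling \eqref{eq:action_K} gives $k\cdot\mathrm{ext}(\alpha)_m=(dC_b|_{m'})^*(dC_{b_1}|_m)^*\alpha=(d(C_{b_1}\circ C_b)|_{m'})^*\alpha$. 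By (iii) this equals $(dC_{b_2}|_{m'})^*\alpha$ with $b_2:=b\cdot b_1$; direct evaluation yields $b_2(m')=kb_1(m)$, so $\sz(b_2(m'))=z$, and (1) identifies the right-hand side with $\mathrm{ext}(\alpha)_{m'}$.

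For (3), by (i) we have $(\d_{CE}\mathrm{ext}(\alpha))_m=\d_{CE}^{\g_m}((dC_b|_m)^*\alpha)$, and by (ii) $dC_b|_m$ is a Lie algebra homomorphism. Since pullback along such maps commutes with the Chevalley--Eilenberg differentials, this equals $(dC_b|_m)^*\d_{CE}^{\g_z}\alpha=\mathrm{ext}(\d_{CE}\alpha)_m$. The principal obstacle is the bookkeeping in (2): carefully establishing the anti-homomorphism (iii) with the correct direction, and correctly identifying $b\cdot b_1$ as a bisection witnessing $\mathrm{ext}(\alpha)_{m'}$; once this is in place, each of (1)--(3) is a short computation built from the chain rule and the relative invariance properties of $\alpha$.
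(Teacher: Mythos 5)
Your proof is correct and follows essentially the same route as the paper: the identity $C_{b\cdot a}=C_a\circ C_b$ combined with $K_z$-invariance of $\alpha$ for well-definedness, preservation of $\h$ by $dC_b$ and the same composition trick for $K$-basicity, and the fact that $\g$ has trivial anchor so that $\d_{CE}$ is fiberwise and commutes with pullback along the Lie algebra morphisms $dC_b|_m$. The only (harmless) imprecision is the smoothness remark: what is actually needed is a local section of $\tz$ restricted to $\sz^{-1}(z)\cap K$ (available since $K$ is transitive), rather than a local bisection through a prescribed arrow, since a bisection $b$ satisfies $\sz(b(m))=z$ only at the single point $m$ — a point the paper's own proof glosses over entirely.
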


\begin{proof}
First, we prove parts (a) and (b). Let $P:=\sz^{-1}(z)\cap K$ by the $K_z$-principal bundle $\tz\colon P\to M$ with right action given by right multiplication. For a section $\alpha\in \sC^\bullet(\g_z)_{K_z\mathrm{-basic}}$, define the smooth function
\[F\colon P\to \wedge^p(\g/\h)^*,\quad k\mapsto F(k)=k\cdot \alpha\in \wedge^p(\g_{\tz(k)}/\h_{\tz(k)})^*. \]
It is clear that $F$ takes values in $\wedge^p(\g/\h)^*$: Indeed, as the differential of $C_b$ sends $\k$ to $\k$, then for any $\xi\in\k_{\tz(k)}$,
$$(k\cdot \alpha)(\xi)=C_b^*(\alpha)(\xi)=\alpha(d_{\t(k)}C_b(\xi))=0.$$
Moreover, as $\alpha$ is $K_z$-invariant, then $F$ is $K_z$-invariant as well: Indeed, if $h\in K_z$, then 
\begin{align*}
F(kh)=(kh)\cdot\alpha=k\cdot (h\cdot \alpha)=k\cdot \alpha=F(k).
\end{align*}
This means that $F$ is the pullback $\tz^*(f)$ of a smooth function $f\colon M\to \wedge^p(\g/\h)^*$, which is precisely $\mathrm{ext}(\alpha)$. The fact that $\mathrm{ext}(\alpha)$ is a section follows directly from the definition of $F$. 

To see (c), we fix $h\in K$. Let $k\in K$ be any element so that $\sz(k)=z$ and $\tz(k)=\sz(h)$. Then
\begin{align*}
    h\cdot\mathrm{ext}(\alpha)_{\sz(h)}=h\cdot (k\cdot \alpha)=hk\cdot \alpha=\mathrm{ext}(\alpha)_{\tz(hk)}=\mathrm{ext}(\alpha)_{\tz(h)}. 
\end{align*}

For (d), we use the description of $\mathrm{ext}(\alpha)$ in terms of bisections of $K$ (see \eqref{eq:ext_bisection}). Item (d) then follows by a straightforward computation by noting that $DC_{b}:\g|_{\mathrm{Dom}(b)}\to \g|_{\mathrm{Im}(\sz\circ b)}$ is a Lie algebroid morphism and that both $\g$ and $\g_z$ have trivial anchor.
\end{proof}

By Proposition \ref{prop:ext}, and as a consequence of \eqref{eq:identification}, we obtain the following:

\begin{theorem}\label{thm:ext_2}
Suppose $K$ is a transitive Lie subgroupoid of the Lie groupoid $G$. The extension operator 
    \[\mathrm{ext}\colon \sC^\bullet(\g_z)_{K_z\mathrm{-basic}}\to \sC^\bullet(A)_{K\mathrm{-basic}}\]
    defined by \eqref{eq:ext_algebroid} is an isomorphism of complexes, which respects the wedge product. The restriction map $$r\colon\sC^\bullet(A)_{K\mathrm{-basic}}\to\sC^\bullet(\g_z)_{K_z\mathrm{-basic}}$$ is a cochain map inverse to $\mathrm{ext}$, which also respects the wedge product.
\end{theorem}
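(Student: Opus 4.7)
The plan is to leverage Proposition \ref{prop:ext} together with the identification \eqref{eq:identification} to reduce the theorem to the verification that $\mathrm{ext}$ and the restriction $r$ are mutually inverse cochain maps respecting wedge products. Concretely, I would define $r\colon \sC^\bullet(A)_{K\mathrm{-basic}}\to \sC^\bullet(\g_z)_{K_z\mathrm{-basic}}$ to be the fiberwise pullback along the inclusion $\g_z\hookrightarrow A_z$, after identifying $\sC^\bullet(A)_{K\mathrm{-basic}}$ with $\sC^\bullet(\g)_{K\mathrm{-basic}}$ as in \eqref{eq:identification}. First I would verify that $r$ takes values in the $K_z$-basic subcomplex: for $\beta\in\sC(A)_{K\mathrm{-basic}}$, the fiber $\beta_z\in\wedge^\bullet(\g_z/\h_z)^*$ is automatically $\h_z$-horizontal, while $K_z$-invariance follows by restricting the $K$-invariance of $\beta$ to arrows $k\in K_z$ (for which $\sz(k)=\tz(k)=z$). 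That $r$ is a cochain map uses only that the Chevalley--Eilenberg differential on $\sC(\g)$ specializes fiberwise to that of $\sC(\g_z)$, a consequence of the anchor of $\g$ being zero.

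The core of the argument is then the verification of the two identities $r\circ\mathrm{ext}=\mathrm{id}$ and $\mathrm{ext}\circ r=\mathrm{id}$. For the first, given $\alpha\in \sC^\bullet(\g_z)_{K_z\mathrm{-basic}}$ I would compute $\mathrm{ext}(\alpha)_z$ by choosing any bisection $b\in\Bis(K)$ with $\sz(b(z))=z$, so that $b(z)\in K_z$. Then $C_b$ restricted to $G_z$ is conjugation by $b(z)$, so that $DC_b|_{\g_z}=\Ad_{b(z)^{-1}}$ and $C_b^*(\alpha)$ is precisely the coadjoint image of $\alpha$ under $b(z)\in K_z$. By Remark \ref{rmk:standard_basic_algebra} this equals $\alpha$, yielding $r(\mathrm{ext}(\alpha))=\alpha$.

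For the second identity, I would take $\beta\in\sC^\bullet(A)_{K\mathrm{-basic}}$ (equivalently $\beta\in \sC(\g)_{K\mathrm{-basic}}$) and, for $m\in M$, unwind
\[\mathrm{ext}(r(\beta))_m=C_b^*(\beta_z),\qquad \sz(b(m))=z.\]
Here $DC_b|_m\colon A_m\to A_{\sz(b(m))}=A_z$ is precisely the differential used to define the action of the arrow $b(m)\in K$ on $\wedge^\bullet(A/B)^*_z$ via \eqref{eq:action_K}. Hence $C_b^*(\beta_z)=b(m)\cdot \beta_z$, and the $K$-invariance of $\beta$ gives $b(m)\cdot\beta_z=\beta_m$. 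Thus $\mathrm{ext}\circ r=\mathrm{id}$. Compatibility of both maps with the wedge product is immediate, since each is defined by a fiberwise pullback along a Lie algebroid morphism.

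The main obstacle I anticipate is not conceptual but notational: one must track carefully, in the check of $\mathrm{ext}\circ r=\mathrm{id}$, which fiber of $A$ is the source and which is the target of $DC_b|_m$, and match the pullback $C_b^*$ of \eqref{eq:ext_algebroid} with the formula defining the $K$-action in \eqref{eq:action_K}. The condition $\sz(b(m))=z$ in \eqref{eq:ext_algebroid} is exactly the compatibility needed to identify $C_b^*(\beta_z)$ with $b(m)\cdot\beta_z$; once this alignment is pinned down, the rest of the theorem follows formally from Proposition \ref{prop:ext} and the identification \eqref{eq:identification}.
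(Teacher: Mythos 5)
Your proposal is correct and takes essentially the same route as the paper: the paper derives Theorem \ref{thm:ext_2} directly from Proposition \ref{prop:ext} together with the identification \eqref{eq:identification}, and your two verifications ($r\circ\mathrm{ext}=\mathrm{id}$ via the coadjoint $K_z$-invariance at $z$, and $\mathrm{ext}\circ r=\mathrm{id}$ via matching $C_b^*(\beta_z)$ with the action \eqref{eq:action_K} and the $K$-invariance of $\beta$) are exactly the computations already implicit in the proof of parts (a) and (b) of that proposition. You merely spell out the details the paper leaves to the reader, and your bookkeeping of sources and targets of $DC_b$ is consistent with the paper's conventions.
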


\section{Van Est maps}\label{section:van-est}

In this section, we study the van Est map in the context of relative cohomology and the case when there is a right inverse, called the van Est integration map.
We define a tubular structure and a Cartan decomposition for a pair $(G,K)$ consisting of a Lie groupoid $G$ and  Lie subgroupoid $K$, and make use of this notion to define a van Est integration map (see Theorem \ref{prop:rightinversegk}). In Sections \ref{section:differentiation_proper} and \ref{section:integration_proper}, we specialize to the case in which $K$ is proper, while in Section \ref{section:properandtransitive} we additionally impose the condition that $K$ is transitive. We conclude with the example of $GL_n(\C)$ in Section \ref{section:example}; this application becomes particularly relevant for the definition of characteristic classes in Section \ref{section:cc}.

\subsection{Tubular structures and Cartan decomposition for $(G,K)$}
  
  Let $K$ be a closed embedded Lie subgroupoid of $G$, e.g.\ any proper Lie subgroupoid $K$. Recall from \cite{Macrun} that in this case, the space of left cosets $G/K$ is a Hausdorff manifold and the projection $G\to G/K$ is a submersion. Denote by $A$ and $B$ the Lie algebroids of $G$ and $K$ respectively.
  
  Denote by $\bar{\tz}:G/K\to M$ the surjective submersion induced by the target map. Note that $\bar{\uz}\colon M\to G/K$, the map induced by the unit map of $G$, is a canonical section of $\bar{\tz}$. Along this unit section $M\subset G/K$, the $\bar{\tz}$-fibers give a canonical decomposition of the tangent space $T(G/K)|_M=TM\oplus A/B$. In particular, the normal bundle of $M$ in $G/K$ is naturally identified with $A/B$. This motivates the following definition:
  
 \begin{definition}\label{def:tubular} A \emph{(global) tubular structure} of $(G,K)$ is a diffeomorphism of bundles over $M$
   \begin{equation*}\xymatrix{  
  A/B  \ar[r]^-{\phi } \ar[d]_q&   G/K \ar[d]^{\bar{\tz}}  \\
M \ar@{=}[r]& M 
 }
 \end{equation*}
such that $\phi(0_m)=\bar{\uz}(m)$ for $m\in M$. 
 \end{definition}

\begin{definition}\label{def:Cartan}Let
   \begin{equation*}\xymatrix{  
  A/B  \ar[r]^-{\psi } \ar[d]_q&   G \ar[d]^{\tz}  \\
M \ar@{=}[r]& M 
 }
\end{equation*}
 be a bundle map taking values in the isotropy groups of $G$ and sending the zero section to $K$. If 
\begin{equation}\label{eq:diagram-cartan}
\Psi\colon A/B\tensor[_q]{\times}{_\tz}K\to G,\quad  \Psi (\xi,k)= \psi(\xi) \cdot k
\end{equation}
is a $K$-equivariant diffeomorphism w.r.t.\ the diagonal action and the left multiplication, then $\Psi$ is called a {\it (global) Cartan decomposition} of $(G,K)$.

\end{definition}  

\begin{remark}\label{rmk:local} In the literature, there are notions that are analogous to those of Definitions \ref{def:tubular} and \ref{def:Cartan} and are {\em local}, i.e., they are only defined near $M$ (see \cite{locales,Eckhard_2}). This explains why ``(global)'' appears in both definitions. In the spirit of \cite{Eckhard_2}, some of the constructions and results below hold in this more general context by replacing (relative) groupoid cohomology by the germ version. Since our emphasis is on global results, from now on we drop ``(global)'' from the terminology.  
\end{remark}


\begin{remark}\label{rmk:Cartan} A Cartan decomposition of $(G,K)$ induces a tubular structure for $G/K$. Indeed, the map
\begin{equation}\label{eq:tubular}\phi\colon A/B\to G/K,\ \  \xi\mapsto \psi(\xi)K\end{equation}
is the required diffeomorphism.
\end{remark}

\begin{remark} 
The fact that \eqref{eq:tubular} is a bijective map imposes some restrictions for the pair $(G,K)$. In particular, $K$ is forced to have the same orbits as $G$ and, denoting by $G(m,m')=\sz^{-1}(m')\cap\tz^{-1}(m)$, for $m,m'\in M$ (and similarly for $K$),
\[G(m,m')\cdot K(m',m)\subset \psi((A/B)_m).\]
\end{remark}

The following example explains the relation that our Definitions \ref{def:tubular} and \ref{def:Cartan} have with standard results in Lie theory (see e.g.\ \cite[Theorem 6.31]{knapp} and \cite[Theorem 32.5]{stroppel}). 

\begin{example}\label{exm:semisimple}(See \cite[Chapter VII]{knapp}) Let $G$ be a Lie group with finitely many connected components and Lie algebra $\g$, and let $K\subset G$ be a maximal compact subgroup with Lie algebra $\mathfrak{k}$. Then the diffeomorphism $G/K\simeq \g/\mathfrak{k}$ (see e.g.\ \cite[Theorem 32.5]{stroppel}) is a tubular structure of $(G,K)$.    

Moreover, suppose that $G$ is reductive, i.e.\ there exists a Lie algebra involution $\theta$ of $\g$, and a non-degenerate, $\mathrm{Ad}(G)$- and $\theta$-invariant bilinear form $B$ on $\g$ such that 
\begin{enumerate}
    \item $\g$ is a reductive Lie algebra,
    \item the decomposition of $\g$ into $+1$ and $-1$ eigenspaces under $\theta$ is $\g=\mathfrak{k}\oplus\mathfrak{p}$, where $\k$ is the Lie algebra of $K$,
    \item $\k$ and $\mathfrak{p}$ are orthogonal under $B$, $B$ is positive definite on $\mathfrak{p}$ and negative definite on $\k$,
    \item for any $g \in G$, the automorphism $\mathrm{Ad}(g)$ of $\g^{\C}$ is inner, and
    \item the map \begin{equation}\label{eq:CartanDecomposition}\mathfrak{p}\times K\to G,\ \ (\xi,k)\mapsto \exp(\xi)\cdot k,\end{equation}
    called the {\it global Cartan decomposition}, is a diffeomorphism.
\end{enumerate}
The map \eqref{eq:CartanDecomposition} is a Cartan decomposition of the pair $(G,K)$; here, $\g/\mathfrak{k}$ is canonically identified with $\mathfrak{p}.$ Examples of reductive Lie groups are connected, semisimple Lie groups $G$ with finite center, $K$ the maximal compact subroup, $\theta$ a Cartan involution and $B$ the Killing form. Other examples include connected closed linear groups of real or complex matrices closed under conjugate transpose, $\theta$ its negative conjugate transpose, $K$ its intersection with the unitary group and $B(X, Y)=\mathrm{Re}\ \mathrm{Tr}(X,Y)$.
\end{example}

\begin{example}\label{exm:transitive} If $G$ is a transitive groupoid whose isotropies have finitely many connected components, then
there exists a proper subgroupoid $K$ for which $(G,K)$ admists a tubular structure. 

To see this, without loss of
generality we assume that $G$ is the Gauge groupoid $\mathrm{Gauge}(P):=(P\times P)/G_m$ of
the principal $G_m$-bundle $P\to M$ given by the source fiber of a specified point $m\in M$ and where $G_m$ is the isotropy group at $m$. 
Consider now the maximal compact subgroup $K_m$ of $G_m$ and choose a reduction of the structural group to $K_m$, i.e.\ a principal $K_m$-subbundle $Q\to M$. Such a $Q$ always exists as this is guaranteed by the existence of a section $s$ of the bundle $P\tensor[]{\times}{_{G_m}}(G_m/K_m)\to M$ with contractible fibers $G_m/K_m$ (indeed, out of $s$ the fibers of $Q$ are defined by $Q_z=s'(z)\cdot K,\ z\in M$, where $s'(z)\in P$ is any element with the property that $s(z)=[s'(z), eK]$). 
Then, acting by $G_m$ on the subset $Q\subset P$, we have the isomorphism of principal $G_m$-bundles
\[Q\tensor[]{\times}{_{K_m}} G_m\overset{\simeq}{\longrightarrow}P,\ \ [p,g]\mapsto p\cdot g, \]
and therefore,
\begin{eqnarray*}(P\times P)/G_m\simeq (Q\times Q)\tensor[]{\times}{_{K_m\times K_m}}(G_m\times G_m)/G_m\simeq(Q\times Q)\tensor[]{\times}{_{K_m\times K_m}}G_m,\\
(Q\times Q)/K_m\simeq (Q_m\times Q_m)\tensor[]{\times}{_{K_m\times K_m}}(K_m\times K_m)/K_m\simeq(Q\times Q)\tensor[]{\times}{_{K_m\times K_m}}K_m,
\end{eqnarray*}
where on the right hand side the right action of $K_m\times K_m$ on $G_m$ (and $K_m$) is given by $(a,b)\cdot g=a^{-1}gb$. Hence, $K:=\mathrm{Gauge}(Q)\subset \mathrm{Gauge}(P)=G$ is a Lie subgroupoid which is proper as the fiber of $Q\to M$ is compact, and with the property that its isotropy group $K_m$ is the maximal compact subgroup of the isotropy group $G_m$ of $G$. 

The bundle of left cosets $\bar\tz\colon G/K\to M$ is diffeomorphic as a bundle to the bundle of left cosets $\bar\tz\colon\bar{G}/\bar{K}\to M$, where $\bar{G}$ and $\bar{K}$ are the Lie subgroupoids of $G$ and $K$ given by the union of the isotropy groups. Analogously, the quotient $A/B$ of the Lie algebroids of $G$ and $K$ is isomorphic as a vector bundle to the quotient $\g/\k$ of the kernels of the anchors $\g=\ker(\a)\subset A$, $\k=\ker(\a)\subset B$. As $\g$ and $\k$ are locally trivial Lie algebra bundles with fibers $\g_m$, $\k_m$ -- the Lie algebras of $G_m$ and $K_m$, respectively -- then the isomorphism $\g_m/\mathfrak{k}_m\simeq G_m/K_m$ extends to a tubular structure
\[A/B\simeq\g/\k\to \bar{G}/\bar{K}\simeq G/K.\]

Similarly, a transitive Lie groupoid whose isotropies have finitely many connected components and are reductive admits a proper subgroupoid with a Cartan decomposition. In this case, the Cartan decomposition for the isotropy Lie groups \eqref{eq:CartanDecomposition} extends to a diffeomorphism
$ \bar\Psi\colon A/B\tensor[_q]{\times}{_\tz} \bar{K} \to  \bar{G}$
that extends naturally to a Cartan decomposition 
\[ A/B\tensor[_q]{\times}{_\tz} K \overset{\Psi}{\longrightarrow}  G. \]
%
%
 
\end{example}


\subsection{Van Est differentiation map for relative complexes}
 The van Est differentiation map is a cochain map
\begin{equation*}\label{eq:vanEst_differentiation}VE_G\colon\sC^\bullet(G)\to\sC^\bullet(A).\end{equation*}
In Appendix \ref{appendix}, we recall from \cite{Eckhard_2} how the van Est differentiation map is obtained using the Perturbation Lemma. 
Explicitly, the commuting left $G$-actions on $B_pG$ obtained from \eqref{eq:actions} in the standard fashion have generating vector fields $\xi^{(i)}\in\mathfrak{X}(B_pG),\ \xi\in\Gamma(A),\ i=0,\ldots,p$.

\begin{theorem}[\cite{Eckhard,Eckhard_2}]
The map $VE_G$ is given by the formula
\begin{equation}\label{eq:vanEst_expression}VE_G(f)(\xi_1,\ldots,\xi_p)=\sum_{s\in S_p}\mathrm{sign}(s)\ \L_{\xi_{s(1)}^{(1)}}\cdots\L_{\xi_{s(p)}^{(p)}}(f)|_M\end{equation}
for $f\in \sC^p(G)$ and $\xi_1,\ldots,\xi_p\in\Gamma(A)$. Here the sum is over the permutation group $S_p$ and $M$ is regarded as a submanifold of $B_pG$ consisting of constant trivial $p$-arrows.
\end{theorem}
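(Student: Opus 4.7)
The plan is to unfold the Perturbation Lemma construction of $VE_G$ from the appendix and match it to the explicit alternating-sum formula. I proceed in two stages: first, verify that the right-hand side of \eqref{eq:vanEst_expression} defines a well-formed cochain map $\sC(G)\to\sC(A)$; second, identify this explicit operator with the Perturbation Lemma output.

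For the first stage, the decisive fact is that the $p+1$ left actions on $B_pG$ pairwise commute, so the generating vector fields satisfy $[\xi^{(i)},\eta^{(j)}]=0$ for $i\ne j$. Combined with the antisymmetrization via $\mathrm{sign}(s)$, this ensures that $VE_G(f)$ is alternating in $\xi_1,\ldots,\xi_p$. For $C^\infty(M)$-multilinearity in each $\xi_j$, the obstructing Leibniz terms coming from inserting $h\in C^\infty(M)$ into one of the slots are symmetric in two of the leg indices and therefore cancel upon antisymmetrization. The cochain identity $\d_{CE}\circ VE_G=VE_G\circ\delta$ is then a direct bookkeeping computation using: (i) the compatibility of the $i$-th action with the face maps $\partial_j$ for $j\notin\{i-1,i\}$; (ii) Cartan's magic formula $\L_\xi=\iota_\xi\d+\d\iota_\xi$; and (iii) the way the $i$-th action interacts with $\partial_{i-1}$ and $\partial_i$ via groupoid multiplication, infinitesimally producing the Lie bracket terms of the Chevalley--Eilenberg differential.

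For the second stage, I would argue by induction on $p$. In degree zero both constructions coincide with the pullback $\uz^*\colon\sC^0(G)\to C^\infty(M)$. For the inductive step, the Perturbation Lemma presents $VE_G$ as an iterated composition of the contracting homotopy $\mathsf{h}$ from \eqref{eq:h} with the differential $\delta$, followed by restriction to the unit submanifold $M\subset B_pG$. The key lemma to establish is that $\delta\circ\mathsf{h}$, after restriction to $M$, produces an antisymmetrized application of one of the commuting generators $\xi^{(i)}$: the operator $h_p$ prepends an additional leg equal to the base point, so the alternating face maps in $\delta$ differentiate this new leg against each of the existing ones, and the infinitesimal limit yields precisely $\L_{\xi^{(i)}}$ via the identification \eqref{eq:left-invariant} of $\xi^L$ with right multiplication by the flow $\varphi_\xi^\epsilon$.

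The main technical obstacle lies in this last identification: carefully tracking the signs and the indexing through $p$ iterations of the Perturbation Lemma so that each iteration contributes a Lie derivative along a distinct action, and so that the accumulated signs assemble into exactly the signed sum over $S_p$ in \eqref{eq:vanEst_expression}. Once this combinatorial matching is carried out, the resulting operator agrees with the right-hand side of \eqref{eq:vanEst_expression} on the generators of $\sC^p(G)$, and by the cochain property established in the first stage, the two agree as cochain maps.
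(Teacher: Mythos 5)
First, note that the paper does not actually prove this theorem: it is quoted from Weinstein--Xu, with the explicit formula attributed to \cite{Eckhard} and the Perturbation Lemma derivation to \cite{Eckhard_2}, which Appendix \ref{appendix} only recalls. So you are attempting a proof the paper deliberately outsources, and the natural benchmark is the derivation of $VE_G=\sp\circ(1+\d\,\mathsf{h})^{-1}\circ\sj$ recalled there. Measured against that, your outline contains one genuine error and one genuine gap.

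The error is in your second stage. On degree-$p$ elements the Perturbation Lemma gives $VE_G=(-1)^p\,\sp\circ(\d\,\mathsf{h})^{p}\circ\sj$, where $\d$ is the \emph{vertical} differential of the van Est double complex, i.e.\ the Chevalley--Eilenberg/foliated de Rham differential along the $\kappa_p$-fibers of $E_pG$ (differentiation in the free $G$-coordinate $g$). You instead iterate $\delta\circ\mathsf{h}$ and describe the mechanism through ``the alternating face maps in $\delta$''. The simplicial differential $\delta$ enters only through the homotopy identity $[\mathsf{h},\delta]=1-\si\circ\sp$; it is not the operator being iterated, and the antisymmetrization over $S_p$ in \eqref{eq:vanEst_expression} is produced by the exterior-derivative structure of $\d$ accumulating one degree in $\wedge^\bullet A^*$ per iteration, not by face maps. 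As written, the ``key lemma'' of your induction concerns the wrong composite and would not close the induction. The gap is that, even with the correct composite, the entire content of the theorem is the sign-and-index bookkeeping showing that $p$ iterations of $\d\,\mathsf{h}$ applied to $\kappa_p^*f$, followed by $\sp=\uz^*$, assemble into $\sum_{s\in S_p}\mathrm{sign}(s)\,\L_{\xi_{s(1)}^{(1)}}\cdots\L_{\xi_{s(p)}^{(p)}}(f)|_M$; you explicitly defer this (``once this combinatorial matching is carried out\dots''), and likewise assert rather than prove the cochain identity and the $C^\infty(M)$-multilinearity in your first stage. What remains is a plan for a proof, not a proof.
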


\begin{remark}
    The formula \eqref{eq:vanEst_expression} is closely related to the Weinstein-Xu formula for the van Est map on normalized cochains in \cite{Weinstein_Xu} (see \cite[Remark 4.5]{Eckhard_2} for more details).
\end{remark}

For the relative subcomplexes, we obtain the following result:

\begin{theorem}\label{cor:VE_relative}
Let $K$ be a Lie subgroupoid of $G$. Then $VE_G$ restricts to a cochain map of the $K$-basic subcomplexes
\[VE_G\colon\sC^\bullet(G)^K\to\sC^\bullet(A)_{K\mathrm{-basic}}.\] 
Moreover, when restricted to $\widetilde{\sC}^\bullet(G)^K$, $VE_G$ is an algebra map, where the products are the ones in Remarks \ref{rmk:cup} and \ref{rmk:wedge}.
\end{theorem}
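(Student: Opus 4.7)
The plan is to verify three assertions: $B$-horizontality and $K$-invariance of $VE_G(f)$ for $f\in \sC^p(G)^K$, and the algebra property on the normalized subcomplex. For $B$-horizontality, given $\eta\in\Gamma(B)$, each summand of $VE_G(f)(\eta,\xi_2,\ldots,\xi_p)$ in \eqref{eq:vanEst_expression} contains a Lie derivative $\L_{\eta^{(i)}}$ for some $i\in\{1,\ldots,p\}$. Because the $p+1$ $K$-actions in \eqref{eq:actions} pairwise commute, so do the operators $\L_{\xi^{(i)}}$ and $\L_{\zeta^{(j)}}$ for $i\neq j$ acting on functions, so I may move $\L_{\eta^{(i)}}$ to the rightmost slot of each summand, where it annihilates $f$ by the $K$-invariance of $f$ at the $i$-th position.

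For $K$-invariance of $VE_G(f)$, I exploit naturality of the van Est map under Lie groupoid automorphisms. For $b\in\Bis(K)$, conjugation $C_b\colon G\to G$ is a Lie groupoid automorphism preserving $K$, and a direct computation using $C_b(g)=b(\tz(g))^{-1}g\,b(\sz(g))$ shows that the pullback $(C_b^* f)(g_1,\ldots,g_p)=f(C_b(g_1),\ldots,C_b(g_p))$ coincides with the composition of the $p+1$ $\Bis(K)$-actions from \eqref{eq:actions} applied to $f$; hence $C_b^* f=f$ for every $K$-invariant $f$. Since $C_b$ intertwines the generating vector fields appearing in \eqref{eq:vanEst_expression}, naturality yields
\[ VE_G(f)=VE_G(C_b^* f)=C_b^* VE_G(f)=b\cdot VE_G(f), \]
which by \eqref{eq:action_K} is precisely the pointwise $K$-invariance of $VE_G(f)$ on the horizontal subspace.

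For the algebra property, I expand $VE_G(f_1\cup f_2)$ via \eqref{eq:vanEst_expression} for $f_1\in\widetilde{\sC}^p(G)^K$, $f_2\in\widetilde{\sC}^q(G)^K$, and distribute each Lie derivative $\L_{\xi^{(i)}}$ over the product by Leibniz. Inspecting \eqref{eq:actions}, $\L_{\xi^{(i)}}$ on $B_{p+q}G$ acts only on the $f_1$-factor for $i<p$, only on the $f_2$-factor for $i>p$, and on both for $i=p$ (corresponding respectively to the $p$-th action generator on $B_pG$ and the $0$-th action generator on $B_qG$). After evaluating at the constant $m$-arrow, the normalization hypothesis kills every term where $f_1$ (resp.\ $f_2$) receives fewer than $p$ (resp.\ $q$) derivatives, since a normalized cochain vanishes to first order along each ``unit hyperplane'' $\{g_j=m\}$, and hence any $r$-fold partial derivative at the constant $m$-arrow vanishes whenever $r<p$. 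Thus the only surviving terms are those where the $i=p$ Leibniz slot feeds its derivative to $f_1$, so that $f_1$ is hit by the $p$ derivatives indexed by $\{1,\ldots,p\}$ and $f_2$ by the $q$ derivatives indexed by $\{p+1,\ldots,p+q\}$. Decomposing a permutation of $\{1,\ldots,p+q\}$ as a $(p,q)$-shuffle composed with a pair in $S_p\times S_q$, the remaining sum reassembles into $(VE_G(f_1)\wedge VE_G(f_2))(\xi_1,\ldots,\xi_{p+q})$ via the standard shuffle formula for the wedge product.

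The main obstacle is the algebra property, specifically the careful bookkeeping to confirm that normalization kills all ``cross'' terms in the Leibniz expansion and that the surviving sum matches the shuffle expansion of the wedge product with the correct signs; the other two assertions reduce to direct computations with the Weinstein--Xu formula and the naturality of $VE_G$ under Lie groupoid automorphisms.
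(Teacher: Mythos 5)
Your arguments for $B$-horizontality and for $K$-invariance are essentially the paper's own: horizontality is obtained by commuting the Lie derivatives (the $p+1$ actions commute, so their generators do) until $\L_{\eta^{(i)}}$ acts directly on $f$, where it vanishes by $K$-invariance; and the invariance of $VE_G(f)$ rests on the identity $C^p_{b,*}\big(\xi^{(i)}\big)=\big(C_{b,*}(\xi)\big)^{(i)}$ together with the observation that the diagonal conjugation $C^p_b$ on $B_pG$ is the composite of the $p+1$ actions by $b(m_0),\dots,b(m_p)$, so $C_b^{p,*}f=f$. The paper writes out the resulting chain of equalities explicitly where you invoke ``naturality of $VE_G$ under groupoid automorphisms'', but the content is identical. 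The one genuine divergence is the algebra property: the paper does not reprove multiplicativity of $VE_G$ on normalized cochains — it cites the known fact that $VE_G\colon\widetilde{\sC}^\bullet(G)\to\sC^\bullet(A)$ is an algebra map and merely checks that $\widetilde{\sC}^\bullet(G)^K$ and $\sC^\bullet(A)_{K\mathrm{-basic}}$ are subalgebras — whereas you redo the Leibniz/normalization/shuffle computation from scratch. Your version is self-contained and correct in outline, at the cost of length; the only step that deserves one more line is ``an $r$-fold derivative of a normalized $p$-cochain vanishes at the constant arrow when $r<p$'': the unit loci $\{g_j\in M\}$ have codimension $\dim G-\dim M$ rather than $1$, so one should note (via the Hadamard lemma applied block by block in adapted coordinates) that a smooth function vanishing on all $p$ of them lies in the product of their vanishing ideals, hence in the $p$-th power of the maximal ideal at $(m,\dots,m)$. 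With that remark both routes are complete; the paper's is shorter, yours avoids the external citation.
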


\begin{proof}
Let $f\in \sC^p(G)^K$. To see that $\iota_\eta (VE_G(f))=0$ for any $\eta\in\Gamma(B)$, note that the Lie derivatives on expression \eqref{eq:vanEst_expression} commute as the $G$-actions commute with each other. Hence, 
$$VE_G(f)(\xi_1,\ldots,\xi_{p-1},\eta)$$
is computed as the sum (with appropriate signs) of expressions of the form
\[\L_{\xi_{s(1)}^{(1)}}\cdots\L_{\xi_{s(p-1)}^{(p-1)}}\L_{\eta^{(i)}}(f)|_M.\]
But $f$ is invariant w.r.t.\ the $p+1$ $K$-actions \eqref{eq:actions}, and so $\L_{\eta^{(i)}}(f)=0$, $i=1,\ldots, p$. 

To see that $VE_G(f)$ is $K$-invariant, let $b\in \Bis(K)$ and $m\in M$. Then
\begin{equation}\label{eq:VE}
(b\cdot VE_G(f))(\xi_1,\ldots,\xi_p)(m)=\sum_{s\in S_p}\mathrm{sign}(s)\ \L_{\big(C_{b,*}(\xi_{s(1)})\big)^{(1)}}\cdots\L_{\big(C_{b,*}(\xi_{s(p)})\big)^{(p)}}(f)|_{(C_b(m),\ldots C_b(m))}.
\end{equation}
The generating vector fields $\xi^{(i)}\in\mathfrak{X}(B_pG),\ \xi\in\Gamma(A)$ are of the form
\begin{equation*}\label{eq:infinitesimal_generators} 
\xi^{(i)}_{g_1,\ldots,g_p}=\begin{cases}
(d\iz(\xi)^R_{g_1},0_{g_2},\ldots,0_{g_p}) & i=0,\\
(0_{g_1},\ldots,\xi^L_{g_i},d\iz(\xi)^R_{ g_{i+1}},\ldots,0_{g_p})&0<i<p,\\
(0_{g_1},\ldots,0_{g_{p-1}},\xi^L_{g_p}) &i=p,
\end{cases}
\end{equation*}
where $\xi^L,d\iz(\xi)^R\in\mathfrak{X}(G)$ denote the left and right invariant vector fields associated to $\xi$, respectively. From this description, it is a straightforward computation to see for any $b\in\Bis(G)$, the pushforward of $\xi^{(i)}$ by the diffeomorphism $C^p_b\colon B_pG\to B_pG,\ C^p_b=C_b\times\cdots\times C_b$, is such that 
\[C^p_{b,*}\big(\xi^{(i)}\big)=\big(C_{b,*}(\xi)\big)^{(i)}.\]
Since $f$ is $K$-invariant, $(C_b^{p,-1})^*(f)=f$ for any $b\in \Bis(G)$, where $C_b^{p,-1}$ denotes the function inverse to $C_b^{p}$. Hence, we compute
\begin{multline*}\label{eq:mult}
\L_{\big(C_{b,*}(\xi_{s(1)})\big)^{(1)}}\cdots\L_{\big(C_{b,*}(\xi_{s(p)})\big)^{(p)}}(f)=\L_{\big(C_{b,*}(\xi_{s(1)})\big)^{(1)}}\cdots\L_{\big(C_{b,*}(\xi_{s(p)})\big)^{(p)}}((C_b^{p,-1})^*(f))\\ 
=\L_{C^p_{b,*}\big(\xi_{s(1)}^{(1)}\big)}\cdots\L_{C^p_{b,*}\big(\xi_{s(p)}^{(p)}\big)}((C_b^{p,-1})^*(f))= (C_b^{p,-1})^*\big(\L_{\xi_{s(1)}^{(1)}}\cdots\L_{\xi_{s(p)}^{(p)}}(f)\big).
\end{multline*}
In the last line, we use recursively that $\L_{C^p_{b,*}(\xi^{(i)})}\big((C^{p,-1}_b)^*(F)\big)=(C^{p,-1}_b)^*\big(\L_{\xi^{(i)}}(F)\big)$ for any function $F\in C^\infty (B_pG)$. Hence, 
\begin{equation}\label{eq:mult-}
\L_{\big(C_{b,*}(\xi_{s(1)})\big)^{(1)}}\cdots\L_{\big(C_{b,*}(\xi_{s(p)})\big)^{(p)}}(f)|_{(C_b(m),\ldots C_b(m))}= \L_{\xi_{s(1)}^{(1)}}\cdots\L_{\xi_{s(p)}^{(p)}}(f)|_{(m,\ldots ,m)}
\end{equation}
Putting together \eqref{eq:VE} and \eqref{eq:mult-} we obtain that $VE_G(f)\in \sC^p(A)^K$. 

Finally, to see that when restricted to $\widetilde{\sC}^\bullet(G)^K$, $VE_G$ becomes an algebra map, recall the following: When considering the normalized subcomplex $\widetilde\sC^\bullet(G)\subset \sC^\bullet(G)$, the restriction $VE_G:\widetilde\sC^\bullet(G)\to \sC^\bullet(A)$  is an algebra map \cite{Crainic:vanEst,Eckhard}. As $\widetilde{\sC}^\bullet(G)^{K}\subset\widetilde \sC^\bullet(G)$ and $\sC^\bullet(A)_{K-\mathrm{basic}}\subset \sC^\bullet(A)$ are subalgebras, then $VE_G|_{\widetilde{\sC}^\bullet(G)^K}$ is also an algebra map.
\end{proof}

  \subsection{Van Est differentiation maps in the proper case}\label{section:differentiation_proper} 
Let $K\subset G$ be a proper subgroupoid.
 Choose any \emph{left invariant} normalized Haar system $\mu$ on $K$
and denote by $VE_{G/K}$ the composition 
 \begin{equation}\label{eq:vanEst_relative}VE_{G/K}\colon \sC^\bullet(G)\overset{\on{Av}}{\longrightarrow}  \sC^\bullet(G)^{K}\overset{VE_G}{\longrightarrow}\sC^\bullet(A)_{K\mathrm{-basic}}; \end{equation}
here, $\on{Av}$ is the homotopy equivalence given by the averaging operator \eqref{eq:averaging} associated to $\mu$. 

In this section, we use the Perturbation Lemma to construct a cochain map $\sC^\bullet(G) \to \sC^\bullet(A)_{K\mathrm{-basic}}$ and show that it coincides with  $VE_{G/K}$. This allows us to obtain van Est integration maps in Section \ref{section:integration_proper}.
We refer to Appendix \ref{appendix}, where we summarize some of the results and techniques of \cite{Eckhard_2} to obtain $VE_G$ (see \eqref{eq:vanEst_expression}) via the Perturbation Lemma. \\

Let $\sD^{p,q}(G)=\Gamma\big(\pi_p^*(\wedge^q A^*)\big)$ be the auxiliary double complex, where $\pi_p \colon E_p G=B_pG\times_M G\to M, \,\pi_p(g_1,\ldots,g_p;g)=\sz(g)$ (see \eqref{eq:double}). We write 
$\sD^{\bullet,\bullet}(G)_{B-\mathrm{inv}}$
for the subspace of $\sD^{\bullet,\bullet}(G)$ consisting of 
sections that are annihilated by contractions with elements of $B\subset A$ --the Lie algebroid of $K$--. Hence one can think of $\sD^{p,q}(G)_{B-\mathrm{inv}}$ as sections of $\Gamma\big(\pi_p^*(\wedge^q (A/B)^*)\big)$. By a direct calculation, both differentials $\delta$ and $\d$ of the auxiliary double complex restrict to $\sD^{\bullet,\bullet}(G)_{B-\mathrm{inv}}$, i.e.\ $(\sD^{\bullet, \bullet}(G)_{B-\mathrm{inv}}, \delta,\d)$ is a double subcomplex.

The $K$-action on $\wedge^q (A/B)^*$ of \eqref{eq:action_K} induces the following $K$-action on  $\sD^{p, q}(G)_{B-\mathrm{inv}}$:
\begin{equation}\label{eq:principalaction}
(k\cdot\psi)(g_1,\ldots,g_p;g)=k\cdot \psi(g_1,\ldots,g_p;gk).
\end{equation}
We denote by 
\begin{equation}\label{eq:Kbasic} \sD^{\bullet, \bullet}(G)_{K\mathrm{-basic}}\end{equation}
the $K$-invariant subspace of $\sD^{\bullet,\bullet}(G)_{B-\mathrm{inv}}$. By a direct calculation, the differentials $\delta$ and $\d$ commute with this action. Hence, $(\sD^{\bullet, \bullet}(G)_{K\mathrm{-basic}}, \delta,\d)$ is a subcomplex, called the $K$-basic subcomplex.

The $K$-basic subcomplex comes with horizontal and vertical augmentation maps\linebreak
$\si\colon \sC^\bullet(A)_{K\mathrm{-basic}}\to 
\sD^{0,\bullet}(G)_{K\mathrm{-basic}}$ given by the pullback $\pi_0^*$ and by $\sj\colon \sC^\bullet(G)
\to \sD^{\bullet,0}(G)_{K\mathrm{-basic}}$ given in degree $p$ by the pullback map $\kappa_p^*$, where $\kappa_p\colon E_pG\to B_pG$ is the projection. The augmentation maps define cochain maps to the total complex 
\[ \sC^\bullet(G)\stackrel{\sj}{\lra} \on{Tot}^\bullet(\sD(G)_{K\mathrm{-basic}})\stackrel{\si}{\longleftarrow}\sC^\bullet(A)_{K\mathrm{-basic}} .\]
Since the Haar measure $\mu$ is left invariant, the $K$-basic subcomplex also comes with a horizontal homotopy operator $\mathsf{h}_K \colon \sD^{\bullet,\bullet}(G)_{K\mathrm{-basic}}\to \sD^{\bullet-1,\bullet}(G)_{K\mathrm{-basic}}$ given by
\begin{equation}\label{eq:hmap} (\mathsf{h}_K\psi)(g_1,\ldots,g_{p-1};g)=(-1)^p \int_{\tz^{-1}(m)}\ \psi(g_1,\ldots,g_{p-1},ga;a^{-1})\mu_{m}(a);\end{equation}
here $m=\sz(g)$. Indeed, a direct calculation shows that 
$\delta\mathsf{h}_K+\mathsf{h}_K\delta=1-\si\circ \sp_K$, where $\sp_K$ vanishes on elements of bidegree $(p,q)$ with $p>0$, while
\begin{equation}\label{eq:pmap}
 \sp_K(\psi)=\int_{\tz^{-1}(m)} \psi(a^{-1})\mu_{m}(a)
,\end{equation} 
for $\psi\in \Gamma\big(\pi_0^*(\wedge^q A^*)\big)_{K\mathrm{-basic}}= \Gamma\big((\ker d\tz)^*\big)_{K\mathrm{-basic}}$. Using the Perturbation Lemma \ref{lem:perturbed}, we obtain a cochain map
\[\sp_K\circ (1+\d\mathsf{h}_K)^{-1}\circ \sj\colon 
\sC^\bullet(G)\to \sC^\bullet(A)_{K-\on{basic}}.\]

\begin{remark}
For $K=M$, one recovers the homotopy operator  \[ (\mathsf{h}\psi)(g_1,\ldots,g_{p-1};g)=(-1)^p  \psi(g_1,\ldots,g_{p-1},g;m).\] 
and the map $\sp=\uz^*$, discussed in the Appendix \ref{appendix}. In particular, we recover the formula 
$VE_G=\sp\circ (1+\d\mathsf{h})^{-1}\circ \sj$ from Theorem \ref{Thm:appendix}.
\end{remark}

The following form of $VE_{G/K}$ via the Perturbation Lemma will allow us to conclude in Theorem \ref{prop:rightinversegk} that $VE_{G/K}$ becomes a homotopy equivalence in the presence of tubular structures for $(G,K)$.

\begin{proposition}\label{thm:vanest}
For any proper Lie subgroupoid $K\subset G$, the cochain map $\sp_K\circ (1+\d\mathsf{h}_K)^{-1}\circ \sj$ equals 
\[\sp\circ (1+\d\mathsf{h})^{-1}\circ \sj\circ\on{Av}.\]
In particular, $VE_{G/K}=\sp_K\circ (1+\d\mathsf{h}_K)^{-1}\circ \sj$.
\end{proposition}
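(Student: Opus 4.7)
My plan is to expand both sides via the Perturbation Lemma series $(1+X)^{-1}=\sum_{k\ge 0}(-X)^k$, reduce each to its only nonzero bidegree component, and then identify the two expressions by propagating averaging operators. Since $\sj(f)\in \sD^{p,0}$ and each application of $\d\mathsf{h}_K$ (resp.\ $\d\mathsf{h}$) shifts the bidegree by $(-1,+1)$, while $\sp_K,\sp$ vanish on positive horizontal bidegree, only the $k=p$ term contributes on each side; it therefore suffices to prove
\[ \sp_K(\d\mathsf{h}_K)^p\sj(f) \;=\; \sp(\d\mathsf{h})^p\sj(\on{Av}\,f). \]

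As preparatory work, I would write $\on{Av}_{EG}^{(i)}$ for the $K$-averaging on $\sD^{\bullet,\bullet}(G)$ with respect to the $i$-th action of \eqref{eq:actions2} (extending the $\on{Av}_{EG}$ of the proof of Theorem \ref{thm:averaging} to arbitrary vertical degree), and $\on{Av}_{EG}=\on{Av}_{EG}^{(0)}\circ\cdots\circ\on{Av}_{EG}^{(p)}$ for the full averaging at bidegree $(p,q)$. I would then verify two factorizations of the $K$-basic operators in terms of the standard ones,
\[ \mathsf{h}_K|_{\sD^{p,q}} = \mathsf{h}\circ \on{Av}_{EG}^{(p)}|_{\sD^{p,q}}, \qquad \sp_K|_{\sD^{0,q}} = \sp\circ \on{Av}_{EG}^{(0)}|_{\sD^{0,q}}, \]
both by direct substitution into \eqref{eq:hmap} and \eqref{eq:pmap}, using the unit identity $a^{-1}m = a^{-1}$ whenever $\sz(a^{-1})=m$. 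I would also record the compatibility $\sj\circ\on{Av} = \on{Av}_{EG}\circ\sj$ on $\sC^p(G)$, which holds because $\kappa_p^* f$ is independent of the fiber coordinate $g$ in $E_pG = B_pG\times_M G$, so the $g$-components of \eqref{eq:actions2} act trivially and the residual actions on $B_pG$ coincide with \eqref{eq:actions}.

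The main step is the commutation
\[ \mathsf{h}\circ \on{Av}_{EG}|_{\sD^{p,q}} \;=\; \on{Av}_{EG}|_{\sD^{p-1,q}}\circ \mathsf{h}_K, \]
which rests on two facts: \emph{(i)} $\mathsf{h}$ commutes with $\on{Av}_{EG}^{(i)}$ for every $i\le p-1$, because the $i$-th action of \eqref{eq:actions2} on $E_pG$ does not touch the last coordinate $g$ and pulls back, under the evaluation $e_p\colon (g_1,\ldots,g_{p-1};g)\mapsto (g_1,\ldots,g_{p-1},g;m)$ appearing in \eqref{eq:h}, to the $i$-th action on $E_{p-1}G$; and \emph{(ii)} the remaining factor $\on{Av}_{EG}^{(p)}$ is absorbed by $\mathsf{h}_K=\mathsf{h}\circ\on{Av}_{EG}^{(p)}$. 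Combined with $[\d,\on{Av}_{EG}^{(i)}]=0$ (each action preserves $\sz(g)$, hence acts trivially on the $\wedge^q A^*$-fiber, while $\d$ is fiberwise), iterating $p$ times yields
\[ (\d\mathsf{h})^p\circ \on{Av}_{EG}|_{\sD^{p,0}} \;=\; \on{Av}_{EG}|_{\sD^{0,p}}\circ (\d\mathsf{h}_K)^p. \]
At bidegree $(0,p)$ we have $\on{Av}_{EG}=\on{Av}_{EG}^{(0)}$, so $\sp\circ\on{Av}_{EG}=\sp_K$; combined with $\sj\circ\on{Av}=\on{Av}_{EG}\circ\sj$ this gives the required equality, and the ``in particular'' claim is then immediate from \eqref{eq:vanEst_relative}.

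The hardest piece of the argument will be the base commutation $\mathsf{h}\circ\on{Av}_{EG}^{(i)}|_{\sD^{p,q}}=\on{Av}_{EG}^{(i)}|_{\sD^{p-1,q}}\circ\mathsf{h}$ for $i\le p-1$, especially the boundary case $i=p-1$: the action on $E_pG$ then touches the pair $(g_{p-1},g_p)$ while on $E_{p-1}G$ it touches $(g_{p-1},g)$, so one must verify using \eqref{eq:actions2} that the substitutions $g_p\mapsto g$, $g\mapsto m$ intertwine them and that the anchor $m_{p-1}=\sz(g_{p-1})$ agrees in both bidegrees, so that the Haar measures match.
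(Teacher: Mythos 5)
Your proposal is correct and is essentially the paper's own argument: you reduce to the single contributing term $\sp_K\circ(\d\mathsf{h}_K)^p\circ\sj=\sp\circ(\d\mathsf{h})^p\circ\sj\circ\on{Av}$ and derive it from the same identities the paper records as \eqref{eq:avcommutes}, \eqref{eq:avcommutes2} and Lemma \ref{lemma:av}, merely pushing the total averaging operator through the composition in one block (which uses $[\mathsf{h},\on{Av}^{(i)}_{\sD(G)}]=0$ for all $i\le p-1$ on $\sD^{p,q}(G)$, a harmless generalization of the case $i=p-1$ stated in \eqref{eq:avcommutes2}) rather than one factor at a time as in the paper's displayed chain of equalities. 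The only point to expand is your parenthetical justification of $[\d,\on{Av}^{(i)}_{\sD(G)}]=0$: ``the action is trivial on the $\wedge^qA^*$-fiber and $\d$ is fiberwise'' is not quite enough, since $\d$ differentiates along the $\kappa_p$-fibers; as in the proof of Lemma \ref{lemma:av}, one should add that each action sends $\kappa_p$-fibers to $\kappa_p$-fibers by left translations in the $g$-coordinate, hence preserves the left-invariant identification $\pi_p^*A\cong T_{\mathcal F}E_pG$ under which $\d$ is the foliated de Rham differential.
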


In preparation for the proof of Proposition \ref{thm:vanest}, in analogy with \eqref{eq:av} for each $p,q \geq 0$ and each $0 \leq i \leq p$, we denote by 
\[\on{Av}^{(i)}_{\sD(G)}\colon \sD^{p,q}(G)\to \sD^{p,q}(G) \]
the induced averaging operator with respect to the $i$-th $K$-action \eqref{eq:actions2}, and by \[\on{Av}_{\sD(G)}=\on{Av}^{(0)}_{\sD(G)}\circ \cdots \circ \on{Av}_{\sD(G)}^{(p)}\]
 the total $K^{p+1}$-averaging operator. It is immediate to see that
\begin{align}\label{eq:avcommutes}\mathsf{h}_K|_{\sD^{p,q}(G)}=\mathsf{h}\circ\on{Av}^{(p)}_{\sD(G)}|_{\sD^{p,q}(G)},\  \ \, \sp_K=\sp\circ \on{Av}^{(0)}_{\sD(G)}
,\end{align}
 \begin{align}\label{eq:avcommutes2}\mathsf{h}\circ\on{Av}^{(p-1)}_{\sD(G)}|_{\sD^{p,q}(G)}=\on{Av}^{(p-1)}_{\sD(G)}\circ\ \mathsf{h}|_{\sD^{p,q}(G)},\  \ \,\on{Av}_{\sD(G)}\circ\ \sj=\sj\circ\on{Av}.\end{align}
We stress that the equality on the left of \eqref{eq:avcommutes} (respectively \eqref{eq:avcommutes2}) holds for the $p$-th (respectively $(p-1)$-th) averaging operator on $\sD^{p,q}(G)$.

\begin{lemma}\label{lemma:av} For all $i$, the averaging operators commute with the vertical differential: 
\[\d\circ\on{Av}^{(i)}_{\sD(G)}=\on{Av}^{(i)}_{\sD(G)}\circ\ \d.\]
\end{lemma}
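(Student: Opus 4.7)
The plan is to reduce the commutation $\d\circ\on{Av}^{(i)}_{\sD(G)}=\on{Av}^{(i)}_{\sD(G)}\circ\d$ to the elementary observation that, for each $i$, the $K^{(i)}$-action \eqref{eq:actions2} on $E_pG$ commutes with the principal right $G$-action \eqref{eq:Gaction}, together with the fact (to be traced back to the Appendix) that the vertical $\d$ on $\sD^{p,\bullet}(G)$ is the Chevalley--Eilenberg-type differential associated with the infinitesimal $A$-action on $E_pG$ generated by the principal $G$-action.

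First I would verify directly from the formulas \eqref{eq:actions2} and \eqref{eq:Gaction} that the two actions commute, for each $i$. For $0\le i<p$ this is immediate, since the $K^{(i)}$-action touches only the slots $g_1,\ldots,g_p$ while the principal action touches only $g$. For $i=p$ the $K^{(p)}$-action sends $g\mapsto a^{-1}g$ (left multiplication) while the principal action sends $g\mapsto gb^{-1}$ (right multiplication), and their effects on the $g_p$-slot are independent; left and right multiplication in $G$ commute by associativity.

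Next I would pass to the infinitesimal level. The principal right $G$-action is generated, for $\xi\in\Gamma(A)$, by the vector fields $\tilde\xi(g_1,\ldots,g_p;g)=(0_{g_1},\ldots,0_{g_p},-\xi^L_g)$, which are tangent to $E_pG$ because $\xi^L$ is tangent to the $\tz$-fibers, and which project to $\a(\xi)$ under $\pi_p$ since $\xi^L\sim_\sz -\a(\xi)$. The commutation of the two actions implies that each $\tilde\xi$ is invariant under the $K^{(i)}$-action, so for every $\phi\in C^\infty(E_pG)$ and $a\in K$, $R_a^*(\L_{\tilde\xi}\phi)=\L_{\tilde\xi}(R_a^*\phi)$ where $R_a$ denotes the $K^{(i)}$-action map. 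Integrating against the Haar measure then yields $\on{Av}^{(i)}_{\sD(G)}\circ\L_{\tilde\xi}=\L_{\tilde\xi}\circ\on{Av}^{(i)}_{\sD(G)}$.

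Finally I would plug this into the Koszul-type expression for $\d$ on $\sD^{p,q}(G)$, namely $(\d\psi)(\xi_0,\ldots,\xi_q)=\sum_{j}(-1)^j\L_{\tilde\xi_j}\bigl(\psi(\xi_0,\ldots,\hat\xi_j,\ldots,\xi_q)\bigr)+\sum_{j<k}(-1)^{j+k}\psi([\xi_j,\xi_k],\xi_0,\ldots,\hat\xi_j,\ldots,\hat\xi_k,\ldots,\xi_q)$. The Lie-derivative terms commute with $\on{Av}^{(i)}_{\sD(G)}$ by the previous step. The bracket/contraction terms are $C^\infty(M)$-linear operations on the $\wedge^q A^*$ factor of $\pi_p^*(\wedge^q A^*)$, whereas $\on{Av}^{(i)}_{\sD(G)}$ is $C^\infty(M)$-linear (since the $K^{(i)}$-action preserves $\pi_p$) and acts on the $C^\infty(E_pG)$ factor; so these terms commute with averaging tautologically. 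The only step that is not formal is the $i=p$ case of the commutation of actions, where the two actions act on overlapping slots; I expect this to be the main (and only) real obstacle.
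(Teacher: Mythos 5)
Your proposal is correct and follows essentially the same route as the paper: the paper also identifies the vertical differential with the foliated de Rham/Chevalley--Eilenberg differential along the vector fields $\hat\xi_{(g_1,\ldots,g_p;g)}=(0_{g_1},\ldots,0_{g_p},\xi^L_g)$, observes that their flows $g\mapsto g\varphi^\epsilon_\xi(\sz(g))$ commute with all $p+1$ of the $K$-actions \eqref{eq:actions2} (the $i=p$ case being left versus right multiplication in the last slot, exactly as you note), and concludes that averaging commutes with the Lie derivatives and hence with $\d$. The only cosmetic difference is that you invoke the Koszul formula on forms while the paper reduces to functions and uses that $\Gamma(T_\F E_pG)$ is generated by the $\hat\xi$; both steps are routine.
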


\begin{proof} By \eqref{eq:dlacomplex}, we have that $\d=-(1)^p\d_{Rh}$ is the foliated de Rham differential on elements of bidegree $(p,q)$ under the isomorphism $\pi_p^* A\cong T_\F (E_pG)$. For a section $\xi\in \Gamma(A)$, the pullback $\pi_p^*(\xi)\in \Gamma(\pi_p^* A)$ is identified with $\hat\xi\in \Gamma(T_\F E_p)$ given by
\[\hat\xi_{(g_1,\ldots,g_p;g)}=(0_{g_1},\ldots,0_{g_p}, \xi^L_{g}).\]
Hence, the flow of $\hat\xi$ is given by
\[\varphi^\epsilon_{\hat\xi}(g_1,\ldots,g_p;g)=\left(g_1,\ldots,g_p;g\varphi^\epsilon_{\xi}(\sz(g)\right)\]
(cf.\ \eqref{eq:left-invariant}). From this expression it is immediate to see that $\varphi^\epsilon_{\hat\xi}$ commutes with the $p+1$ $G$-actions \eqref{eq:actions2} and therefore, for $\psi\in C^\infty (E_pG)$, 
$$\d_{Rh}\left(\on{Av}^{(i)}_{\sD(G)}\psi \right)(\hat\xi)=\on{Av}^{(i)}_{\sD(G)}\left(\d_{Rh}\psi \right)(\hat\xi) \quad i=0,\ldots,p.$$ 
Since $\Gamma(T_\F E_p)$ is generated by sections $\hat\xi$ for $\xi\in\Gamma(A)$, then $\d_{Rh}\circ\on{Av}^{(i)}_{\sD(G)}\psi =\on{Av}^{(i)}_{\sD(G)}\circ\ \d_{Rh}\psi$ for any $\psi\in C^\infty (E_pG)$. Hence, $\d_{Rh}\circ\on{Av}^{(i)}_{\sD(G)}\psi =\on{Av}^{(i)}_{\sD(G)}\circ\ \d_{Rh}\psi$ for $\psi\in \Gamma(\wedge ^qT^*_\F E_p)\simeq\sD^{p,q}(G)$  and any $q\geq0$.
\end{proof}

\begin{proof}[Proof of Proposition \ref{thm:vanest}]
At degree $p$, we compute $\sp_K\circ (1+\d\mathsf{h}_K)^{-1}\circ \sj$:
\begin{align*}
\sp_K\circ (\d \mathsf{h}_K)^p \circ \sj&=
\sp\circ \on{Av}^{(0)}_{\sD(G)} (\d \mathsf{h}_K)^p \circ \sj\\
 &=\sp\circ \on{Av}^{(0)}_{\sD(G)}\circ (\d\mathsf{h}\on{Av}^{(1)}_{\sD(G)} \circ \cdots\circ
\d\mathsf{h}\on{Av}^{(p)}_{\sD(G)})
 \circ \sj\\&=\sp\circ (\d\on{Av}^{(0)}_{\sD(G)}\mathsf{h}\circ\d\on{Av}^{(1)}_{\sD(G)}\mathsf{h} \circ \cdots\circ
\d\on{Av}^{(p-1)}_{\sD(G)}\mathsf{h}\on{Av}^{(p)}_{\sD(G)})
 \circ \sj\\&=\sp\circ (\d\mathsf{h}\on{Av}^{(0)}_{\sD(G)}\circ\ \d\mathsf{h}\on{Av}^{(1)}_{\sD(G)} \circ \cdots\circ
\d\mathsf{h}\on{Av}^{(p-1)}_{\sD(G)}\on{Av}^{(p)}_{\sD(G)})
 \circ \sj\\
 &=\ldots
 \\ &=\sp\circ (\d \mathsf{h})^p \circ \on{Av}_{\sD(G)}\circ\sj\\
 &=\sp\circ (\d \mathsf{h})^p \circ\sj\circ \on{Av},
\end{align*}
where in the second equality we used \eqref{eq:avcommutes}, in the third we applied Lemma \ref{lemma:av}, in the fourth we used the left equation of \eqref{eq:avcommutes2}, and in the last we used the right equation of \eqref{eq:avcommutes2}.
\end{proof}

\subsection{Van Est integration maps in the proper case}\label{section:integration_proper} 

Let $K$ be a proper Lie subgroupoid of $G$. We will use the Perturbation Lemma to obtain van Est integration maps (see  Appendix \ref{appendix} for background on the Perturbation Lemma).
For the discussion, another interpretation of the $K$-basic (double) complex will be convenient (see the case of Lie groups in \cite[Section 3.3]{Eckhard_2}). In this case, we will make use of a tubular structure to obtain a vertical homotopy operator on the double complex and define the van Est integration map $R_{G/K}\colon\sC^\bullet(A)_{K-\mathrm{basic}}\to \sC^\bullet(G)$.\\

The manifold $G/K$ comes equipped with a left $G$-action 
\[a\cdot gK=agK\]
that happens along the map $\bar{\tz}:G/K\to M$. Let $(\Omega^\bullet_\F(G/K)^L,\d_{Rh})$ be the de Rham complex of $G$-invariant $\bar{\tz}$-foliated forms; this can be seen as a subcomplex of $(\Omega^\bullet_\F(G)^L,\d_{Rh})$ via pulling back along the quotient map $G \to G/K$. Hence, by unraveling the isomorphism of differential complexes \eqref{eq:forms} and the definition of $\sC^\bullet(A)_{K\mathrm{-basic}}$, the restriction of \eqref{eq:forms} to $\Omega^\bullet_\F(G/K)^L$ is an isomorphism onto $\sC^\bullet(A)_{K\mathrm{-basic}}$:
  \begin{equation}\label{eq:dcedrh}\big(\sC^\bullet(A)_{K\mathrm{-basic}},\d_{CE}\big)\simeq\big(\Omega^\bullet_\F(G/K)^L,\d_{Rh}\big).\end{equation}
Similarly, elements in the double complex $\sD^{p,q}(G)_{K-\on{basic}}$ as in \eqref{eq:Kbasic} may be identified with functions  
\begin{equation}\label{eq:betamap} \beta\colon B_pG\to \Omega^q_\F(G/K)\end{equation}
such that $\beta(g_1,\ldots,g_p)\in \Omega^q(\bar\tz^{-1}(m))$ for $m=\sz(g_p)$, and smoothly depending on $(g_1,\ldots,g_p)$. (For $p=0$, this is to be interpreted as $\beta\colon M\to \Omega^q_\F(G/K)$ such that $\beta(m)\in \Omega^q(\bar\tz^{-1}(m))$.) In these terms, the two
differentials are 
\begin{equation}\label{eq:dbeta}
 (\d\beta)(g_1,\ldots,g_p)=(-1)^p \d_{Rh} \beta(g_1,\ldots,g_p),
 \end{equation}
 where $\d_{Rh}$ is the de Rham differential, and 
\begin{align} \label{eq:deltabeta}(\delta\beta)(g_1,\ldots,g_{p+1})&=
\beta(g_2,\ldots,g_{p+1})+\sum_{i=1}^p (-1)^i \beta(g_1,\ldots,g_ig_{i+1},\ldots,g_{p+1})
\\&\ \ +(-1)^{p+1} L(g_{p+1})^*\beta(g_1,\ldots,g_p),\nonumber\end{align}
where $L(a)\colon \bar{\tz}^{-1}(\sz(a))\to  \bar{\tz}^{-1}(\tz(a))$ is 
the action of $a\in G$. 
(For $p=0$, this is to be interpreted as $(\delta\beta)(g_1)=\beta-L(g_1)^*\beta$.) 
The horizontal augmentation map $\si$ is the inclusion of the invariant forms \eqref{eq:dcedrh}, while $\sj$ is the inclusion of $C^\infty(B_pG)$ into the space of 
 	maps $\beta\colon B_pG\to \Omega^0_\F(G/K)$ such that $\beta(g_1,\ldots,g_p)\in C^\infty(\bar\tz^{-1}(m))$ is constant on $\bar\tz^{-1}(m)$, for any given $(g_1,\ldots,g_p)$ with $m=\sz(g_p)$.\\

Suppose now that $(G,K)$ admits a tubular structure. We transport the scalar multiplication in $A/B$ to a retraction along $\bar\tz$-fibers 
 \begin{equation}\label{eq:lambdaretraction}
  \lambda\colon [0,1]\times  G/K\to  G/K,\ (t,gK)\mapsto \lambda_t(gK).
  \end{equation}
Here, $\lambda_0=\bar\uz\circ \bar\tz$.
  The retraction determines a homotopy operator 
\[ T\colon \Omega^q_\F(G/K)\to \Omega^{q-1}_\F(G/K)\]
given by pullback under $\lambda$ followed by integration over $[0,1]$. 
This has the properties $T\circ T=0$ and 
\[ T\beta|_K=0.\]
It defines a vertical homotopy operator $\K$ on the double complex, where
\[ (\K\beta)(g_1,\ldots,g_p)=(-1)^p T(\beta(g_1,\ldots,g_p)).\]
That is, $[\K,\d]=1-\sj\circ \sq$, where for $\beta$ of bidegree $(p,0)$
\[ (\sq\beta)(g_1,\ldots,g_p)=\beta(g_1,\ldots,g_p)(\sz(g_p)K).\] 
The properties of $T$ imply that 
\[ \K\circ \K=0,\ \ \sq\circ \K=0.\]

By the Perturbation Lemma \ref{lem:perturbed}, we obtain a cochain map 
\begin{equation}\label{eq:vanEst} R_{G/K}=\sq\circ (1+\delta\K)^{-1}\circ \si\colon \sC^\bullet(A)_{K-\mathrm{basic}}\to \sC^\bullet(G). \end{equation}
This formula together with Proposition \ref{thm:vanest} imply that the relative cohomology of the Lie algebroid computes the differentiable cohomology of the Lie groupoid, as stated precisely below: 

\begin{theorem}\label{prop:rightinversegk} Let $K$ be a proper Lie subgroupoid of $G$ and choose a tubular structure for $(G,K)$.
The van Est integration map $R_{G/K}$ is a homotopy equivalence, which is a right inverse to the van Est differentiation map $VE_{G/K}$
\[VE_{G/K}\circ R_{G/K}=\mathrm{id}_{\sC(A)_{K-\mathrm{basic}}}.\]
 Moreover, if the tubular structure comes from a Cartan decomposition as in Remark \ref{rmk:Cartan}, then $R_{G/K}$ takes values in $\widetilde{\sC}^\bullet(G)^K$,
is a right inverse to the restriction $VE_G\colon \widetilde{\sC}^\bullet(G)^K\to \sC^\bullet(A)_{K-\mathrm{basic}}$, and induces an algebra isomorphism in cohomology.
\end{theorem}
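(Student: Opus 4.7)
The plan is to invoke the Perturbation Lemma (Appendix \ref{appendix}) on the double complex $\sD^{\bullet,\bullet}(G)_{K\mathrm{-basic}}$ twice, once horizontally via $(\sp_K,\si,\mathsf{h}_K)$ and once vertically via $(\sq,\sj,\K)$, and then verify the strict identity $VE_{G/K}\circ R_{G/K}=\mathrm{id}$ by an explicit computation, before specializing to the Cartan setting. First I would check the side conditions $\K\circ\K=0$ and $\sq\circ\K=0$, both of which are direct consequences of $T\circ T=0$ and $T\beta|_K=0$ for the fiberwise integration $T$. The Perturbation Lemma then produces $R_{G/K}=\sq\circ(1+\delta\K)^{-1}\circ\si$ as part of a deformation retract of the total complex $(\on{Tot}(\sD(G)_{K\mathrm{-basic}}),\delta+\d)$ onto $\sC^\bullet(G)$, symmetrically to the retract from Proposition \ref{thm:vanest} that produces $VE_{G/K}$ onto $\sC^\bullet(A)_{K\mathrm{-basic}}$. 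Since both $\sj$ and $\si$ are quasi-isomorphisms with the total complex, both $R_{G/K}$ and $VE_{G/K}$ are homotopy equivalences.

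For the strict identity $VE_{G/K}\circ R_{G/K}=\mathrm{id}_{\sC^\bullet(A)_{K\mathrm{-basic}}}$, only the bidegree-matched terms survive the geometric series expansions, so $R_{G/K}(\alpha)=(-1)^q\sq(\delta\K)^q\si(\alpha)$ and $VE_{G/K}(f)=(-1)^q\sp_K(\d\mathsf{h}_K)^q\sj(f)$ for $\alpha\in\sC^q(A)_{K\mathrm{-basic}}$ and $f\in\sC^q(G)$. Under the identification \eqref{eq:dcedrh} with left-invariant foliated forms on $G/K$, $R_{G/K}(\alpha)(g_1,\ldots,g_q)$ is the integral of $\alpha^L$ over a $q$-simplex in $G/K$ whose vertices are the partial products $g_1\cdots g_iK$ and $\bar\uz(\sz(g_q))$, with faces traced out by the retraction $\lambda$. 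Applying $VE_G$ evaluates iterated Lie derivatives $\L_{\xi_i^{(i)}}$ at units; the simplex collapses to a point at $(m,\ldots,m)$, so the fundamental theorem of calculus picks out exactly the top-order coefficient and the antisymmetrization $\sum_{s\in S_q}\mathrm{sign}(s)$ recovers $\alpha(\xi_1,\ldots,\xi_q)$, iterating the degree-one identity $\tfrac{d}{d\epsilon}\big|_{0}\int_0^1\alpha^L|_{\lambda_t(g_\epsilon K)}(\partial_t\lambda_t(g_\epsilon K))\,dt=\alpha(\xi)$.

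For the Cartan case, the $K$-equivariance of the decomposition $\Psi(\xi,k)=\psi(\xi)k$ lifts to $K$-equivariance of the retraction $\lambda$ via the identity $\psi(a\xi)=a\psi(\xi)a^{-1}$, showing that the integration simplex depends only on the $A/B$-components of the $g_i$ under their Cartan factorization; hence $R_{G/K}(\alpha)$ is invariant under the $(q+1)$ commuting $K$-actions \eqref{eq:actions} and lies in $\sC^\bullet(G)^K$. If some $g_i\in K$ then $g_i=\psi(0)k_i$ so two adjacent vertices of the integration simplex coincide and the simplex degenerates; hence $R_{G/K}(\alpha)\in\widetilde{\sC}^\bullet(G)^K$. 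Since $\on{Av}$ acts as the identity on $\sC^\bullet(G)^K$, we obtain $VE_G\circ R_{G/K}=VE_{G/K}\circ R_{G/K}=\mathrm{id}_{\sC(A)_{K\mathrm{-basic}}}$. The algebra isomorphism follows: by Theorem \ref{cor:VE_relative}, $VE_G|_{\widetilde{\sC}^\bullet(G)^K}$ is an algebra map; by Theorem \ref{thm:averaging} the inclusion $\widetilde{\sC}^\bullet(G)^K\hookrightarrow\sC^\bullet(G)$ is a homotopy equivalence, and combined with $VE_{G/K}$ being a homotopy equivalence this forces $VE_G|_{\widetilde{\sC}^\bullet(G)^K}$ to be a quasi-isomorphism, which therefore induces an algebra isomorphism in cohomology.

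The main obstacle is the strict identity $VE_{G/K}\circ R_{G/K}=\mathrm{id}$: a generic Perturbation Lemma argument only yields the identity up to a chain homotopy of the form $[\d_{CE},\sp_K^\infty\K^\infty\si]$ coming from the compatibility of the two retractions, so the proof must either execute the explicit simplex-integration calculation to cancel this correction or exploit a side-condition compatibility between $\mathsf{h}_K$ and $\K$ that forces this term to vanish identically.
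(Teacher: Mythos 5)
Your overall architecture is the right one (two applications of the Perturbation Lemma on $\sD^{\bullet,\bullet}(G)_{K\mathrm{-basic}}$, one horizontal and one vertical), but the crux of the first assertion is left open. You correctly flag at the end that a generic Perturbation Lemma argument only yields $VE_{G/K}\circ R_{G/K}=\mathrm{id}$ up to chain homotopy, and that one must either complete an explicit calculation or exploit a compatibility between $\mathsf{h}_K$ and $\K$ --- but you do neither. The side conditions you do verify, $\K\circ\K=0$ and $\sq\circ\K=0$, are true but are not the ones that make the composite the identity; what Lemma \ref{lem:backandforth} requires is $\mathsf{h}_K\circ\K=0$ and $\sp_K\circ\K=0$, i.e.\ the \emph{horizontal} homotopy and projection annihilate the image of the \emph{vertical} homotopy. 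This is the step the paper carries out, and it is short: in the description \eqref{eq:betamap} of elements of the double complex as maps $\beta\colon B_pG\to \Omega^q_\F(G/K)$, one computes $(\mathsf{h}_K\beta)(g_1,\ldots,g_{p-1})(gK)=(-1)^p\int_{\tz^{-1}(m)} L((ga)^{-1})^*\beta(g_1,\ldots,g_{p-1},ga)\,\mu_m(a)$, which depends only on the restriction of $\beta(\cdots)$ to the basepoint $\sz(\cdot)K$ of the $\bar{\tz}$-fiber; since $T\beta|_K=0$, every element in the range of $\K$ is killed by $\mathsf{h}_K$, and similarly by $\sp_K$. Your alternative route --- the simplex-integration formula for $R_{G/K}$ followed by iterated Lie derivatives at units --- is only sketched ("the fundamental theorem of calculus picks out exactly the top-order coefficient") and would require substantially more work to make rigorous; as written, the right-inverse identity is not established.

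For the Cartan part, your argument again rests on the unproved explicit simplex description of $R_{G/K}$. The paper's route is more robust: it introduces the subcomplex $\sD^{\bullet,\bullet}(G,K)_{K\mathrm{-basic}}$ of elements that are invariant under the $p+1$ restricted $K$-actions and vanish whenever some $g_i\in K$, checks (using $K$-equivariance of the retraction $\lambda$, which is where the Cartan decomposition enters) that $\delta$, $\d$, $\K$ and $\si$ preserve it, and observes that $\sq$ carries its bidegree $(p,0)$ part into $\widetilde{\sC}^p(G)^K$; hence $R_{G/K}$ lands in $\widetilde{\sC}^\bullet(G)^K$ with no explicit formula needed. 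Finally, your closing sentence proves that $VE_G|_{\widetilde{\sC}^\bullet(G)^K}$ induces an algebra isomorphism in cohomology, but the theorem asserts this of $R_{G/K}$; you still need the (easy) remark that in cohomology $R_{G/K}$ is the two-sided inverse of that algebra isomorphism, and is therefore an algebra isomorphism itself.
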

%
%

\begin{remark}\label{rmk:integration_groups} Following Example \ref{exm:semisimple}, let $G$ be a Lie group with finitely many connected components and Lie algebra $\g$, and let $K\subset G$ be a maximal compact subgroup. In this context, the van Est integration map \eqref{eq:vanEst}
$$R_{G/K}\colon \sC^\bullet(\g)_{K-\mathrm{basic}}\to \sC^\bullet(G)$$ and the first part of Theorem \ref{prop:rightinversegk} first appeared in \cite[Theorem 3.6]{Eckhard_2} (see also \cite{locales}), where also an explicit formula was given in terms of the tubular structure of $(G,K)$. As a bonus from Theorem \ref{prop:rightinversegk} we obtain that, if we are in the setting of Example \ref{exm:semisimple} (e.g.\ if $G$ reductive as well), then $R_{G/K}$ takes values in the normalized $K$-invariant subcomplex $ \widetilde{\sC}^\bullet(G)^K$ and it becomes an algebra map when passing to cohomology. 

To recall the formula for $R_{G/K}$, we denote by $\lambda_t=\lambda(t,\cdot)\colon G/K\to G/K$ the induced scalar multiplication by $t$, with $\lambda$ as in \eqref{eq:lambdaretraction}. For $(g_1,\ldots,g_p)\in G^p$ and $(t_1,\ldots,t_p)\in [0,1]^p$, let 
\begin{equation}\label{eq:gammaformula0}
\gamma^{(p)}_{t_1,\ldots,t_p}(g_1,\ldots,g_p)=\big(\lambda_{t_1}\circ L(g_1)\cdots \circ \lambda_{t_p}\circ L(g_p)\big)(eK).
\end{equation}
For a fixed $(g_1,\ldots,g_p)\in G^p$, this defines a map $\gamma^{(p)}(g_1,\ldots,g_p)\colon [0,1]^p\to G/K$. On the other hand,
\begin{equation}\label{eq:form}\sC^\bullet(\g)_{K-\mathrm{basic}}\simeq \Omega^\bullet(G/K)^L,\ \ \alpha\mapsto \alpha_{G/K}:=\alpha\circ\theta,\end{equation}
where $\alpha_{G/K}\in\Omega^\bullet(G/K)^L$ is the $G$-invariant form obtained by precomposing $\alpha$ with the left invariant Maurer-Cartan form $\theta$ of $G$ and then passing to the quotient $G/K$. 
The van Est integration map is then given by the formula 
\begin{equation}\label{eq:veintk}
R_{G/K}(\alpha)(g_1\ldots,g_p)=\int_{[0,1]^p}\gamma^{(p)}(g_1,\ldots,g_p)^*\alpha_{G/K}.\end{equation} 
\end{remark}

\begin{proof}[Proof of Theorem \ref{prop:rightinversegk}]
By Proposition \ref{thm:vanest}, $VE_{G/K}$ is obtained from the Perturbation Lemma using the double complex  $\sD^{\bullet,\bullet}(G)_{K-\on{basic}}$ with horizontal homotopy $\mathsf{h}_K$ and projection\linebreak $\sp_K\colon \sD^{0,\bullet}(G)_{K-\on{basic}}\to \sC^\bullet(A)_{K-\on{basic}}$. Then by Lemma \ref{lem:backandforth} for the first part it suffices to show that 
$\mathsf{h}_K\circ \K=0$ and $\sp_K\circ \K=0$. Given $\beta$ as in \eqref{eq:betamap}, 
\[ (\mathsf{h}_K\beta)(g_1,\ldots,g_{p-1})(gK)=(-1)^p \int_{\tz^{-1}(m)}\ L((ga)^{-1})^*\beta(g_1,\ldots,g_{p-1},ga)\ \mu_{m}(a);\]
hence $\mathsf{h}\beta=0$ when $\beta(g_1,\ldots,g_p)|_{\sz(g_p)K}=0$ for all $(g_1,\ldots,g_p)\in B_pG$. In particular, this applies when $\beta$ is in the range of $T$. This shows $\mathsf{h}_K\circ \K=0$; the argument for $\sp_K\circ \K=0$ is similar.

To see that $R_{G/K}$ takes values in $\sC^\bullet(G)^K$ when having a Cartan decomposition, denote by $\sD^{p,q}(G,K)_{K\mathrm{-basic}}\subset \sD(G)^{p,q}_{K\mathrm{-basic}}$ the space of invariant elements $\beta$ w.r.t.\ the $p+1$ $K$-actions \eqref{eq:actions2} (i.e.\ we restrict the $G$-actions to $K$), and so that $\beta(g_1,\ldots,g_p)=0$ whenever $g_i\in K$ for some $i$. Note that $\sq$ maps $\sD^{p,0}(G,K)_{K\mathrm{-basic}}$ to $\widetilde{\sC}^p(G)^K$. We claim that $\sD^{\bullet,\bullet}(G,K)_{K\mathrm{-basic}}$ is a subcomplex of $\big(\sD(G)^{\bullet,\bullet}_{K\mathrm{-basic}},\delta, \d_{CE}\big)$, and that 
the rest of the maps $\K,\si$ involved in the definition of $R_{G/K}$ send $\sD(G,K)_{K\mathrm{-basic}}$ to $\sD(G,K)_{K\mathrm{-basic}}$. Indeed, for the last of the $p+1$ action we have that 
$$(a\cdot \beta)(g_1,\ldots,g_p)=L(a^{-1})^*\beta(g_1,\ldots g_pa)$$
for $a\in K$, $\tz(a)=\sz(g_p)$. A direct computation then shows that $\sD(G,K)_{K\mathrm{-basic}}$ is closed under $\delta$ and $\d_{CE}$ as in \eqref{eq:deltabeta} and \eqref{eq:dbeta}, respectively;  similarly, $\si$ maps $\sC^\bullet(A)_{K-\mathrm{basic}}=\Omega^q_\mathcal{F}(G/K)^{L}$ to $\sD^{0,q}(G,K)_{K\mathrm{-basic}}$ -- the space of left $K$-invariant foliated forms of $G/K$ --. Moreover, the tubular structure \eqref{eq:tubular} of $G/K$ is $K$-equivariant; hence, 
\[\lambda_t(a\cdot g K)=a\cdot\lambda_t(gK)\]
for $a\in K$, $\sz(a)=\tz(g)$. This implies that $$L(a^{-1})^*T(\beta(g_1,\ldots,g_pa))=T(L(a^{-1})^*\beta(g_1,\ldots,g_pa)).$$
With this, a direct computation shows that $\sD(G,K)_{K\mathrm{-basic}}$ is closed under $\K$, hence implying that  $R_{G/K}$ lands in $\widetilde{\sC}^\bullet(G)^K$. To see that in this case $R_{G/K}$ is an algebra map in cohomology, note that in this case  
$VE_G|_{\widetilde{\sC}^\bullet(G)^K}\circ R_{G/K}=\mathrm{id}.$
As $VE_G|_{\widetilde{\sC}^\bullet(G)^K}$ is an algebra map by Theorem \ref{cor:VE_relative},
we then obtain that, when passing to cohomology, $R_{G/K}$ is an isomorphism with inverse the algebra map $VE_G|_{\widetilde{\sC}^\bullet(G)^K}$. This implies that in cohomology $R_{G/K}$ is an algebra map.
%
\end{proof}

  \subsection{The proper and transitive case}\label{section:properandtransitive} 
  
By Example \ref{exm:transitive} we know that any transitive Lie groupoid $G$ satisfying a very mild assumption admits a proper transitive subgroupoid $K$ such that the pair $(G,K)$ has a tubular structure as in Definition \ref{def:tubular} (e.g.\  when the isotropy Lie groups of $G$ have finitely many connected components). Moreover, in some cases the tubular structure comes from a Cartan decomposition as in Definition \ref{def:Cartan} (e.g. when the isotropy Lie groups of $G$ are non-compact semisimple). In this section we put together some results from the proper and transitive cases, and follow the notation of Section \ref{transitive}.\\

Assume that $K$ is a proper and transitive Lie subgroupoid of $G$ and fix $z\in M$. Recall from \cite[Appendix 10.4]{Joao} that any measure $\mu_M$ on $M$ determines a Haar system $\mu$ on $K$ by the formula
\[\int_{\tz^{-1}(m)}f(a)\mu_m(a)=\int_M\left(\int_{K_m}f(ka)  \d k\right)\mu_M;\]
  here $\d k$ is the normalized invariant Haar measure on the compact Lie group $K_m$ and $a\in\tz^{-1}(m) \mapsto \int_{K_m}f(ka)  \d k$ is regarded as a function on $M=\tz^{-1}(m)/K_m$ as it is $K_m$-invariant. In particular we may take $\mu_M$ to be the delta-distribution at $z\in M$. The resulting properly supported normalized Haar system becomes
  \begin{equation}\label{eq:measure}\int_{\tz^{-1}(m)}f(a)\mu_m(a)=\int_{K_m}f(kh)  \d k\end{equation}
where, by the invariance of $\d k$, $h\in K$ is {\it any} arrow so that $\sz(h)=z$ and $\tz(h)=m.$ \\

For the remaining results in this section, we fix $z\in M$ and set the following notation: The isotropy Lie group of $G$ (respectively $K$) at $z$ is denoted by $G_z$ (respectively $K_z$). Analogously, $A$ denotes the Lie algebroid of $G$ and the isotropy Lie algebra of $A$ at $z$ is denoted by $\g_z$.
  
  \begin{lemma}\label{lemma:Av} Suppose $K$ is a proper and transitive subgroupoid of $G$. The following diagram commutes
  \begin{equation*}\label{eq:diagram1} \xymatrix{ 
\sC^p(G)  \ar[r]^-{\mathrm{Av} } \ar[d]_r&   \sC^p(G)^{K} \ar[d]^r  \\
\sC^p(G_z) \ar[r]^-{\mathrm{Av}}& \sC^p(G_z)^{K_z}.  
 }
 \end{equation*}
Here, $r$ denotes the restriction map, the bottom averaging operator is associated to the normalized invariant Haar measure $\d k$ on $K_z$, and the upper averaging operator to the Haar system defined by \eqref{eq:measure}.
  \end{lemma}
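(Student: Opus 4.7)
The plan is to unpack both composites in the diagram and show that on a tuple $(g_1,\ldots,g_p) \in G_z^p = B_pG_z$, the two procedures coincide term by term. The central observation is twofold: first, the $K$-actions \eqref{eq:actions} restrict to $K_z$-actions on $G_z^p$, and second, the Haar system $\mu$ defined by \eqref{eq:measure} restricts, on target fibers over $z$, to the normalized Haar measure $\d k$ on $K_z$.

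First I would verify that each of the $p+1$ actions \eqref{eq:actions} sends $G_z^p \times K_z$ into $G_z^p$. For the $0$-th action, $k_0 \in K_z$ acts by $g_1 \mapsto k_0^{-1} g_1$; since $\sz(k_0^{-1}) = \tz(k_0) = z = \tz(g_1)$, the result lies in $G_z$. The middle and last actions are checked analogously. Consequently, restriction $r$ intertwines the $K^{p+1}$-action on $\sC^p(G)$ with the $K_z^{p+1}$-action on $\sC^p(G_z)$, in the sense that $r$ descends/commutes with the actions on tuples in $G_z^p$.

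Next I would carry out the measure comparison. For $(g_1,\ldots,g_p) \in G_z^p$ all base points $m_i$ equal $z$, so every one of the $p+1$ integrals defining $\mathrm{Av}(f)(g_1,\ldots,g_p)$ is against $\mu_z$ on $\tz^{-1}(z) \cap K$. Applying \eqref{eq:measure} with $m = z$ and taking $h = z$ (the unit, which indeed satisfies $\sz(h) = \tz(h) = z$) gives $\int_{\tz^{-1}(z)\cap K} \varphi(a)\,\mu_z(a) = \int_{K_z}\varphi(k)\,\d k$ for every integrable $\varphi$. Plugging this into the iterated integral defining $\mathrm{Av}$ and using step one to replace $f$ by $r(f)$ inside the integrand (since the argument lies in $G_z^p$), one obtains
\[
r(\mathrm{Av}(f))(g_1,\ldots,g_p) = \int_{K_z^{p+1}} r(f)\bigl((g_1,\ldots,g_p)\cdot(k_0,\ldots,k_p)\bigr)\, \d k_0 \cdots \d k_p = \mathrm{Av}(r(f))(g_1,\ldots,g_p),
\]
which is exactly the commutativity claim.

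There is no serious obstacle here: the lemma is essentially a direct computation once the compatibility of the actions with restriction is noted. The only mild care needed is in interpreting formula \eqref{eq:measure} when $m = z$, and in confirming that the unit $z$ is a valid choice for the auxiliary element $h$, which permits the Haar system to reduce on the fiber over $z$ to the intrinsic Haar measure on the compact isotropy group $K_z$.
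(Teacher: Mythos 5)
Your proof is correct and follows essentially the same route as the paper's: restrict to a tuple in $B_pG_z$, observe that all the averaging integrals are then over $\tz^{-1}(z)\cap K$, and use \eqref{eq:measure} with the unit $h=z$ to identify $\mu_z$ with the normalized Haar measure on $K_z$, then iterate over the $p+1$ actions. The only cosmetic difference is that you state explicitly (correctly) that the $K$-actions restrict to $K_z$-actions on $G_z^p$, which the paper leaves implicit.
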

  
  \begin{proof} For $f\in C^\infty(B_pG)$ and $(g_1,\ldots,g_p)\in B_pG_z$, we have that $r\circ \mathrm{Av^{(i)}}(f)(g_1,\ldots g_p)= \mathrm{Av^{(i)}}(f)(g_1,\ldots g_p),$ where $\mathrm{Av^{(i)}}$ is the averaging operator w.r.t. the $i$-th $K$-action induced by the Haar system \eqref{eq:measure}. For $0<i<p,$
\[\begin{split}
\mathrm{Av^{(i)}}(f)(g_1,\ldots g_p)&=\int _{\tz^{-1}(z)\cap K}f(g_1,\ldots,g_i a,a^{-1} g_{i+1},\ldots,g_p)\mu_z(a)\\
&= \int _{K_z}f(g_1,\ldots,g_i (kh),(kh)^{-1} g_{i+1},\ldots,g_p)\d k
\end{split}\]
with $h\in K$ any arrow with $\sz(h)=\tz(h)=z$ (see \eqref{eq:measure}). Letting $h=z$, the above equation becomes 
\[\begin{split}
\int _{K_z}f(g_1,\ldots,g_i k,k^{-1} g_{i+1},\ldots,g_p)\d k= \mathrm{Av^{(i)}}(r(f))(g_1,\ldots g_p),
\end{split}\]
where now $\mathrm{Av^{(i)}}$ is the one induced by $\d k$. For $i=0$ and $p$, we again obtain that $r\circ \mathrm{Av^{(i)}}(f)=\mathrm{Av^{(i)}}\circ r(f)$. The conclusion now follows by consecutively applying the $i$-th averaging operators so as to obtain that $r\circ \mathrm{Av}(f)=\mathrm{Av}\circ r(f)$ (see \eqref{eq:averaging}).
  \end{proof}  
  
When dealing with pairs $(G,K)$ that admit a tubular structure $\phi$ (or a Cartan decomposition), the restriction $\phi_z$ to their isotropies at $z$ is also a tubular structure (respectively a Cartan decomposition) of $(G_z,K_z)$. These are the tubular structures considered in the following result:

\begin{lemma}\label{lemma:R}
Suppose that $K$ is a proper and transitive subgroupoid of $G$ such that $(G,K)$ has a Cartan decomposition. Then the following diagrams commute
\[ \xymatrix{ 
\sC^p(A)_{K-\mathrm{basic}}  \ar[r]^-{R_{G/K} } \ar[d]_r&   \widetilde{\sC}^p(G)^{K} \ar[d]^r  \\
\sC^p(\g_z)_{K_z-\mathrm{basic}}\ar[r]^-{R_{G_z/K_z}}& \widetilde{\sC}^p(G_z)^{K_z}  
 }
 \xymatrix{ 
& \sC^p(A)_{K-\mathrm{basic}}  \ar[r]^-{R_{G/K} } &   \widetilde{\sC}^p(G)^{K} \\
& \sC^p(\g_z)_{K_z-\mathrm{basic}}\ar[r]^-{R_{G_z/K_z}} \ar[u]^{\mathrm{ext}}& \widetilde{\sC}^p(G_z)^{K_z}  \ar[u]_{\mathrm{ext}}.
 }
\]
\end{lemma}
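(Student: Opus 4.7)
The plan is to deduce the second diagram formally from the first, so the bulk of the work goes into establishing $r\circ R_{G/K}=R_{G_z/K_z}\circ r$. The strategy is to use the explicit construction of $R_{G/K}$ via the Perturbation Lemma, namely $R_{G/K}=\sq\circ(1+\delta\K)^{-1}\circ\si$, and show that the natural restriction to isotropy is a morphism of the entire Perturbation Lemma datum: double complex, both differentials $\delta$ and $\d$, the augmentations $\si$ and $\sj$, the vertical homotopy $\K$, and the projection $\sq$.

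The key geometric input is that a Cartan decomposition $(\psi,\Psi)$ of $(G,K)$ restricts to a Cartan decomposition of $(G_z,K_z)$: since $\psi\colon A/B\to G$ takes values in the isotropy groups, its restriction $\psi_z$ to the fiber $(A/B)_z=\g_z/\h_z$ lands in $G_z$, and the induced tubular structure $\phi_z\colon \g_z/\h_z\to G_z/K_z$ coincides with $\phi|_{(A/B)_z}$ under the canonical identification $\bar{\tz}^{-1}(z)\simeq G_z/K_z$. Consequently, the fiberwise-linear retraction $\lambda$ on $G/K$ (obtained by transporting scalar multiplication on $A/B$ via $\phi$) restricts to the analogous retraction $\lambda^z$ on $G_z/K_z$. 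Using the interpretation of elements of $\sD^{p,q}(G)_{K\text{-basic}}$ as maps $\beta\colon B_pG\to\Omega^q_\F(G/K)$, the restriction is simply $r\beta\colon B_pG_z\to\Omega^q_\F(G_z/K_z)$, and the compatibility of $T$ (hence of $\K$) with $r$ follows because integration along $[0,1]$ against $\lambda^*$ commutes with pulling back from $G/K$ to $G_z/K_z$. The remaining intertwinings $r\circ\si=\si\circ r$, $r\circ\sq=\sq\circ r$, and commutation of $r$ with $\delta,\d$ are immediate from the definitions. Expanding $(1+\delta\K)^{-1}$ as a Neumann series and applying these intertwinings term by term yields the first diagram.

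For the second diagram, given $\alpha\in\sC^p(\g_z)_{K_z\text{-basic}}$, apply the first diagram to $\mathrm{ext}(\alpha)\in\sC^p(A)_{K\text{-basic}}$ and use $r\circ\mathrm{ext}=\mathrm{id}$ (Theorem \ref{thm:ext_2}) to obtain
\[r\bigl(R_{G/K}(\mathrm{ext}(\alpha))\bigr)=R_{G_z/K_z}(\alpha).\]
Since $R_{G/K}(\mathrm{ext}(\alpha))$ lies in $\widetilde{\sC}^p(G)^K$ by Theorem \ref{prop:rightinversegk}, and on this complex $\mathrm{ext}\circ r=\mathrm{id}$ by Theorem \ref{thm:ext}, applying $\mathrm{ext}$ to both sides gives $R_{G/K}(\mathrm{ext}(\alpha))=\mathrm{ext}(R_{G_z/K_z}(\alpha))$, which is the second diagram.

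The main obstacle is bookkeeping rather than conceptual: one has to verify the compatibility of $\K$ with $r$ carefully in the double-complex formulation. In particular, one must identify $\bar{\tz}^{-1}(z)$ with $G_z/K_z$ (which uses transitivity of $K$), check that this identification intertwines $\lambda$ with $\lambda^z$, and then confirm that pullback under the retraction followed by integration over $[0,1]$ commutes with evaluating $\beta$ on $p$-arrows in $B_pG_z$. Once this is in place, the rest of the proof is a formal manipulation of the Perturbation Lemma formula.
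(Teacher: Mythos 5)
Your proof is correct and follows essentially the same route as the paper: the paper's own proof simply asserts that the left diagram "clearly commutes" and then derives the right diagram exactly as you do, from the left one together with $r$ and $\mathrm{ext}$ being mutually inverse (Theorems \ref{thm:ext} and \ref{thm:ext_2}). Your verification of the left diagram --- checking that restriction to the isotropy at $z$ intertwines the whole Perturbation Lemma datum $(\delta,\d,\si,\sj,\K,\sq)$, with the key point that the Cartan decomposition restricts to one for $(G_z,K_z)$ so that $\lambda$ restricts to $\lambda^z$ on $\bar\tz^{-1}(z)\simeq G_z/K_z$ --- is exactly the content the paper leaves implicit.
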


\begin{proof} 
The left diagram clearly commutes. The right diagram commutes because $r$ and $\mathrm{ext}$ are inverse to each other, and the commutativity of the left diagram.
\end{proof}

Passing to the van Est differentiation maps, we have the following general result for transitive Lie subgroupoids, whose proof is analogous to that of Lemma \ref{lemma:R}.

\begin{corollary}\label{cor:r-ve}
Let $K$ be a transitive Lie subgroupoid of $G$. Then, the diagrams
\[
\xymatrix{ 
\sC(A)^p_{K-\mathrm{basic}} \ar[d]_{r}&   \sC^p(G)^{K} \ar[d]^{r}  \ar[l]_-{VE_{G}} \\
\sC^p(\g_z)_{K_z\mathrm{-basic}} & \sC^p(G_z)^{K_z} \ar[l]_-{VE_{G_z}}
 }
\xymatrix{ 
&\sC(A)^p_{K-\mathrm{basic}}  &   \sC^p(G)^{K}  \ar[l]_-{VE_G }\\
&\sC^p(\g_z)_{K_z\mathrm{-basic}} \ar[u]^{{\mathrm{ext}}} & \sC^p(G_z)^{K_z} \ar[l]_-{VE_{G_z}} \ar[u]_{{\mathrm{ext}}} 
 }
\]
 commute (an analogous result holds for the normalized invariant subcomplexes). Here, the right and left $\mathrm{ext}$ are given by equations \eqref{eq:ext} and \eqref{eq:ext_algebroid}, respectively.
\end{corollary}

%

  For the following result, we consider the Haar system \eqref{eq:measure} on $K$ and the invariant Haar measure $\d k$ on $K_x$.  We also consider the tubular structure of $(G,K)$ given by the Cartan decomposition and its induced tubular structure on $(G_z,K_z).$ 

\begin{theorem}\label{thm:transitive} Let $K$ be a proper and transitive subgroupoid of $G$ such that $(G,K)$ admits a Cartan decomposition. Then the following is a commutative diagram of homotopy equivalences

 \[
 \xymatrix{ 
\sC^\bullet(G)  \ar[r]^-{VE_{G/K} } \ar[d]_r&   \sC^\bullet(A)_{K-\mathrm{basic}}\ar[d]^r  \\
\sC^\bullet(G_z) \ar[r]^-{VE_{G_z/K_z}}& \sC^\bullet(\g_z)_{K_z-\mathrm{basic}}.
 }
\]
In particular, \[r\circ VE_{G/K}\colon \sC^\bullet(G)\to \sC^\bullet(\g_z)_{K_z-\mathrm{basic}} \ \ \mathrm{and}\ \ \mathrm{ext}\circ R_{G_z/K_x}\colon \sC^\bullet(\g_z)_{K_z-\mathrm{basic}}\to \sC^\bullet(G)\] are homotopy equivalences with $(r\circ VE_{G/K})\circ(\mathrm{ext}\circ R_{G_z/K_x})=\mathrm{id}_{\sC^\bullet(g_z)_{K_z-{\mathrm{basic}}}}$.

\end{theorem}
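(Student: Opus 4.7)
The plan is a pure diagram chase, using the commutativity to transfer homotopy-equivalence properties across the square and to deduce the composition identity at the end. I would proceed in four steps.

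First, I would establish commutativity of the square. By definition \eqref{eq:vanEst_relative} one has $VE_{G/K}=VE_G\circ \mathrm{Av}$ and $VE_{G_z/K_z}=VE_{G_z}\circ \mathrm{Av}$. Chaining them with the restriction gives
\[
r\circ VE_{G/K}=r\circ VE_G\circ \mathrm{Av}=VE_{G_z}\circ r\circ \mathrm{Av}=VE_{G_z}\circ \mathrm{Av}\circ r=VE_{G_z/K_z}\circ r,
\]
where the second equality is the first diagram of Corollary \ref{cor:r-ve} and the third is Lemma \ref{lemma:Av} (here one must be careful to use the specific Haar system \eqref{eq:measure}, which is precisely what the hypotheses of Lemma \ref{lemma:Av} prescribe, and its induced normalized Haar measure on $K_z$, matching what Theorem \ref{prop:rightinversegk} needs on both the upper and the lower rows).

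Second, I would check that three of the four maps in the square are homotopy equivalences and then deduce the fourth. The top row $VE_{G/K}$ is a homotopy equivalence by Theorem \ref{prop:rightinversegk}, since $K$ is proper and $(G,K)$ admits a tubular structure induced by the Cartan decomposition (see Remark \ref{rmk:Cartan}); the bottom row $VE_{G_z/K_z}$ is a homotopy equivalence by the same theorem applied to the pair $(G_z,K_z)$, noting that $K_z$ is compact and $(G_z,K_z)$ inherits a Cartan decomposition by restriction. The right vertical $r\colon\sC^\bullet(A)_{K\mathrm{-basic}}\to \sC^\bullet(\g_z)_{K_z\mathrm{-basic}}$ is in fact an isomorphism of complexes by Theorem \ref{thm:ext_2}, hence a homotopy equivalence. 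The commutative identity $r\circ VE_{G/K}=VE_{G_z/K_z}\circ r$ together with invertibility (up to homotopy) of the two van Est maps and of the right-hand $r$ then forces the left vertical $r\colon \sC^\bullet(G)\to \sC^\bullet(G_z)$ to be a homotopy equivalence as well.

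Third, since compositions of homotopy equivalences are homotopy equivalences, $r\circ VE_{G/K}$ is a homotopy equivalence; similarly $R_{G_z/K_z}\colon \sC^\bullet(\g_z)_{K_z\mathrm{-basic}}\to \widetilde{\sC}^\bullet(G_z)^{K_z}$ is a homotopy equivalence by the second part of Theorem \ref{prop:rightinversegk} (this is where the Cartan decomposition is used to ensure values in the normalized $K_z$-invariant subcomplex), and $\mathrm{ext}\colon \widetilde{\sC}^\bullet(G_z)^{K_z}\to \widetilde{\sC}^\bullet(G)^{K}\subset\sC^\bullet(G)$ is an isomorphism by Theorem \ref{thm:ext}; hence $\mathrm{ext}\circ R_{G_z/K_z}$ is a homotopy equivalence.

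Finally, for the composition identity, I would rewrite using the commutativity of the square:
\[
(r\circ VE_{G/K})\circ(\mathrm{ext}\circ R_{G_z/K_z})=VE_{G_z/K_z}\circ(r\circ \mathrm{ext})\circ R_{G_z/K_z}=VE_{G_z/K_z}\circ R_{G_z/K_z}=\mathrm{id},
\]
where the middle step uses $r\circ \mathrm{ext}=\mathrm{id}$ on $\widetilde{\sC}^\bullet(G_z)^{K_z}$ from Theorem \ref{thm:ext}, and the last step is $VE_{G_z/K_z}\circ R_{G_z/K_z}=\mathrm{id}$ from Theorem \ref{prop:rightinversegk}. The only real subtlety in the whole argument is checking that the various compositions are well defined at the level of subcomplexes---in particular, that $R_{G_z/K_z}$ actually lands in the normalized $K_z$-invariant subcomplex so that $\mathrm{ext}$ can be applied, and that the Haar systems used on $K$ and $K_z$ are compatible as in Lemma \ref{lemma:Av}; once these bookkeeping points are in place, the conclusion is immediate.
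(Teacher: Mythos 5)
Your proof is correct and follows essentially the same route as the paper: commutativity of the square via Corollary \ref{cor:r-ve} together with Lemma \ref{lemma:Av} (you are in fact more explicit than the paper in spelling out the $\mathrm{Av}$-compatibility step), the homotopy-equivalence claims via Theorems \ref{thm:ext}, \ref{thm:ext_2} and \ref{prop:rightinversegk}, and the same computation for the composition identity using $r\circ\mathrm{ext}=\mathrm{id}$ and $VE_{G_z/K_z}\circ R_{G_z/K_z}=\mathrm{id}$. The only genuine deviation is that you deduce that the left vertical $r\colon\sC^\bullet(G)\to\sC^\bullet(G_z)$ is a homotopy equivalence by a two-out-of-three argument in the commutative square, whereas the paper simply cites the known fact that restriction to an isotropy group is a homotopy equivalence for transitive groupoids; both justifications are valid.
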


\begin{proof}First, one can check that for transitive groupoids the inclusion $\iota\colon G_z\to G$ is an essential equivalence (i.e.\ a Lie groupoid map which induces a Morita equivalence, see \cite{Crainic:vanEst} for the definitions). Hence, by \cite[Theorem 1]{Crainic:vanEst} the map $r\colon \sC^\bullet(G)\to \sC^\bullet(G_z)$ induced by $\iota$ at cohomology, is a homotopy equivalence. That the rest of the maps involved in the diagram and $R_{G_z/K_x}$ are homotopy equivalences is a consequence of Theorems \ref{thm:ext}, \ref{thm:ext_2} and \ref{prop:rightinversegk}. The commutativity of the diagram is Lemma \ref{lemma:Av} together with Corollary \ref{cor:r-ve}.

Also, by the commutativity of the diagram above, and Theorems \ref{thm:ext} and \ref{prop:rightinversegk},
\begin{align*}(r\circ VE_{G/K})\circ(\mathrm{ext}\circ R_{G_z/K_x})&=(VE_{G_z/K_z}\circ r)\circ(\mathrm{ext}\circ R_{G_z/K_x})\\
&=VE_{G_z/K_z}\circ R_{G_z/K_x}=\mathrm{id}_{\sC^\bullet(g_z)_{K_z-\mathrm{basic}}}.\end{align*}

To conclude the proof, note that $\mathrm{ext}\circ R_{G_z/K_x}\colon \sC^\bullet(g_z)_{K_z-\mathrm{basic}}\to \sC^\bullet(G)$ can be written as the composition
\[\sC^\bullet(g_z)_{K_z-\mathrm{basic}}\overset{R_{G_z/K_x}}{\longrightarrow} \widetilde{\sC}^\bullet(G_z)^{K_z} \overset{\mathrm{ext}}{\longrightarrow}  \widetilde{\sC}^\bullet(G)^{K}\overset{\iota}{\longrightarrow} {\sC}^\bullet(G),\]
where all the maps are homotopy equivalences ($\iota$ by Theorem \ref{thm:normalized}, and $\mathrm{ext}$ by Theorem \ref{thm:ext}). 
\end{proof}

\subsection{Example: The polar decomposition and the cohomology of $GL_n(\C)$}\label{section:example}

Let $GL_n=GL_n(\C)$ and let $\gln$ be its Lie algebra. We will explicitly find generators of $H(GL_n)$ at the cochain level, described in terms of cochains in $\sC^\bullet(\gln)_{U(n)-\mathrm{basic}},$ where $U(n)=\{B\in GL_n\mid BB^*=I\}$. We will also describe the van Est map $VE_{GL_n}\colon\sC^\bullet(GL_n)\to\sC^\bullet(\gln)$ in terms of the cochain generators of the respective cohomologies. 

In the first part, we find explicit generators for $H(\gln,U(n))$ at the cochain level. Then, setting $G=GL_n$ and $K=U(n)$, one can apply Theorem \ref{prop:rightinversegk} to find the generators of $H(GL_n)$ as the image of the generators of $H(\gln,U(n))$ via the Est integration map \linebreak
$R_{GL_n/U(n)}\colon  \sC^\bullet (\gln)_{U(n)-\mathrm{basic}}\to \sC^\bullet(GL_n).$
\\

First of all, recall from \cite{kamber} that 
\[H(\gln)= \wedge(u_1',u_3',\ldots, u'_{2n-1}, b_1,b_3,\ldots, b_{2n-1}),\]
where the $u's$ and $b's$ are given (at the cochain level) by
 $u_q=\mathrm{Re}(\Phi_{2q-1}),\ b_q=\mathrm{Im}(\Phi_{2q-1})$ for $q$ odd and $u'_q=\mathrm{Im}(\Phi_{2q-1}),\ b_q=\mathrm{Re}(\Phi_{2q-1})$ for $q$ even, with
\begin{equation*}\Phi_{2q-1}(A_1,\ldots,A_{2q-1})=\sum_{\gamma\in S_{2p-1}}\mathrm{sgn}(\gamma)\mathrm{tr} (A_{\gamma(1)}\cdots A_{\gamma(2q-1)})
\end{equation*}
being the $\Ad$--invariant skew-symmetric multilinear maps on $\gln$. In turn, 
\[H(\gln,U(n))=(\wedge^\bullet\mathfrak{p}^*)_{U(n)-\mathrm{inv}}=\wedge(u_1,u_3,\ldots,u_{2n-1}),\]
where the $u's$ are given (at the cochain level) by
\begin{equation}\label{eq:u}u_{2q-1}=i^{q-1}\Phi_{2q-1}\in C^{2q-1}(\gln)_{U(n)-\mathrm{basic}}.\end{equation}

Note that the inclusion map 
\begin{equation}\label{eq:inc_H}\iota\colon H(\gln,U(n))\to H(\gln)\end{equation}
sends $u_{2q-1}$ to $u'_{2q-1}$ and it is injective. \\

For the second step, we use the {\it polar decomposition} of $GL_n$ (see Example \ref{exm:semisimple}). It comes from the polar decomposition of $\gln$ as  
\[\gln=\mathfrak{u}(n)\oplus \mathfrak{p},\ \ \ A=\frac{1}{2}(A-A^*)+\frac{1}{2}(A+A^*),\ A\in\gln.\]
with $\mathfrak{u}(n)=\{A\in\gln\mid A^*=-A\}$ -- the Lie algebra of $U(n)$ --, while $\mathfrak{p}=\{A\in\gln\mid A^* = A\}$. Note that $\mathfrak{u}(n)$ and $\mathfrak{p}$ correspond to the eigenspace  $1$ and $-1$, respectively, of the Cartan involution $\theta(X)=-X^*$. The polar decomposition of $GL_n$ is then defined as the $U(n)$-equivariant diffeomorphism
\[\mathfrak{p}\times U(n)\to GL_n,  \ \ (X,U)\mapsto e^XU,\]
 thus it is a Cartan decomposition for the pair $(GL_n, U(n)).$ Applying Theorem \ref{prop:rightinversegk} we obtain the following description of $H(GL_n)$.

\begin{theorem}\label{eq:group} 
The van Est integration map \eqref{eq:vanEst}
\[R_{GL_n/U(n)}\colon  \sC^\bullet (\gln)_{U(n)-\mathrm{basic}}\to \sC^\bullet(GL_n)\]
is a homotopy equivalence and it induces an algebra isomorphism in cohomology. In particular, 
\begin{equation*}H(GL_n)=\wedge(v_1,\ldots, v_{2n-1}) \end{equation*}
where $v_{2p-1}:=R_{GL_n/U(n)}(u_{2p-1})$ has degree $2p-1$ and is a normalized $U(n)$-invariant cochain (in the sense of Remark \ref{rmk:normalized}).
\end{theorem}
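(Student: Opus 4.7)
The plan is to deduce this theorem as a direct application of Theorem \ref{prop:rightinversegk} to the pair $(GL_n, U(n))$, using the polar decomposition as the Cartan decomposition. The first task is to verify that the polar decomposition $\mathfrak{p} \times U(n) \to GL_n$, $(X,U) \mapsto e^X U$ fits Definition \ref{def:Cartan}: here $K = U(n)$ is proper (compact), the map $\psi(X) = e^X$ sends $\mathfrak{p} = \g/\mathfrak{u}(n)$ into $GL_n$ via the exponential, and $U(n)$-equivariance follows from $\Ad_U(\mathfrak{p}) \subset \mathfrak{p}$. This is exactly the setup of Example \ref{exm:semisimple}, except that $GL_n$ is reductive rather than semisimple; the construction goes through unchanged since all that matters is that $U(n)$ is a maximal compact subgroup and $\mathfrak{p}$ is the $(-1)$-eigenspace of the Cartan involution $\theta(X) = -X^*$.

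Once this Cartan decomposition is in place, Theorem \ref{prop:rightinversegk} applies verbatim and yields the first assertion: $R_{GL_n/U(n)}$ is a homotopy equivalence that lands in the normalized $U(n)$-invariant subcomplex $\widetilde{\sC}^\bullet(GL_n)^{U(n)}$, is a right inverse to $VE_{GL_n}|_{\widetilde{\sC}^\bullet(GL_n)^{U(n)}}$, and induces an algebra isomorphism in cohomology. In particular, every cochain in the image of $R_{GL_n/U(n)}$ is automatically normalized and $U(n)$-invariant, and cochain degrees are preserved since $R_{GL_n/U(n)}$ is a map of graded complexes.

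For the ``In particular'' statement, I would invoke the classical Kamber--Tondeur computation \cite{kamber} recalled above, namely
\[
H^\bullet(\gln, U(n)) \;=\; \big(\wedge^\bullet \mathfrak{p}^*\big)_{U(n)\text{-inv}} \;=\; \wedge(u_1, u_3, \ldots, u_{2n-1}),
\]
with cochain-level generators $u_{2q-1} = i^{q-1}\Phi_{2q-1}$ as in \eqref{eq:u}. Since $R_{GL_n/U(n)}$ induces an algebra isomorphism on cohomology, transporting this exterior algebra presentation gives $H^\bullet(GL_n) = \wedge(v_1, v_3, \ldots, v_{2n-1})$ with $v_{2p-1} = R_{GL_n/U(n)}(u_{2p-1})$ of degree $2p-1$, and each $v_{2p-1}$ is normalized and $U(n)$-invariant by the previous paragraph.

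There is essentially no obstacle here: the theorem is a specialization of Theorem \ref{prop:rightinversegk} combined with a known cohomology computation. The only subtlety worth double-checking is that Example \ref{exm:semisimple} applies to $GL_n$ even though $GL_n$ is only reductive, but this is harmless since the only structural input actually needed is the Cartan involution and the existence of a maximal compact subgroup, both of which are present for $GL_n$.
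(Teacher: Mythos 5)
Your proposal is correct and matches the paper's argument exactly: the paper likewise observes that the polar decomposition $\mathfrak{p}\times U(n)\to GL_n$, $(X,U)\mapsto e^XU$ is a Cartan decomposition for $(GL_n,U(n))$ in the sense of Definition \ref{def:Cartan} (noting only the analogy with the semisimple case of Example \ref{exm:semisimple}, not relying on it), then applies Theorem \ref{prop:rightinversegk} together with the Kamber--Tondeur computation $H(\gln,U(n))=\wedge(u_1,u_3,\ldots,u_{2n-1})$ to transport the generators. Your extra care in checking the $U(n)$-equivariance via $\Ad_U(\mathfrak{p})\subset\mathfrak{p}$ and in flagging that reductivity rather than semisimplicity suffices is a welcome clarification but not a departure from the paper's route.
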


With the previous result at hand, we obtain the following description of the degree one generator $v_1\in \sC^1(GL_n)$:

\begin{corollary}\label{cor:group}
The first cohomology class $v_1=R_{GL_n/U(n)}(u_1)\in H^1(GL_n)$ is explicitly given at the cochain level by
\[v_1(A)=\mathrm{tr}(X)\in\R,\ \ \ A\in GL_n,\]
where $A=e^XU\ (X\in\mathfrak{p})$ is the polar decomposition of the invertible matrix $A$.
\end{corollary}

\begin{proof}

First, we identify the quotient $GL_n/U(n)$ with the space of positive-definite Hermitian matrices $P=\exp_{GL_n}(\mathfrak{p})$. The scalar multiplication of $P$ is inherited by the diffeomorphism given by the restriction of the exponential $e\colon \mathfrak{p}\to P$:
\begin{equation}\label{eq:lambda}\lambda_t\colon P\to P,\ \ e^X\mapsto e^{tX}\end{equation}
We apply $R_{GL_n/U(n)}$ as in \eqref{eq:veintk} to $u_1=\mathrm{tr}|_\mathfrak{p}$. 
For this, notice that under the identification $C^1(\gln,\mathfrak{u}(n))\simeq\Omega^1(P)^{GL_n}$ given in \eqref{eq:form}, the $1$-form $\mathrm{tr}_{P}=\mathrm{tr}_{GL_n/U(n)}$ is written as the composition 
\[\mathrm{tr}_{P}\colon  TP\overset{\theta}{\longrightarrow}\gln=\mathfrak{p}\oplus\mathfrak{u}(n)\overset{\mathrm{tr}|_{\mathfrak{p}}}{\longrightarrow}\R\] 
(all diagonal entries of a Hermitian matrix are real), where $\theta$ is the left invariant Maurer Cartan form restricted to $TP$. Now, the map $\gamma_A\colon [0,1]\to P, \ A\in GL_n$ appearing in $R_{GL_n/U(n)}$ is just 
\[\gamma_A(t)=\lambda_t(AU(n))=\lambda_t(e^XUU(n))=\lambda_t(e^XU(n))=e^{tX}\]
where $A=e^XU$ is the polar decomposition of $A$. We have that
\begin{equation*}
\begin{split}
\gamma_A^*(\mathrm{tr}_P)\left(\frac{d}{dt}\right)=\mathrm{tr}_P\circ \gamma_{A,*}\left(\frac{d}{dt}\right)=\mathrm{tr}_P\left(\frac{d}{dt}e^{tX}\right)=\mathrm{tr}_P(L_{e^{tX}}(X))=\mathrm{tr}(X),
\end{split}
\end{equation*}
where the third equality follows by using that $t\mapsto e^tX\in GL_n$ is an integral curve of the left invariant vector field determined by $X\in\mathfrak{p}\subset\gln$, and the last equality by left-invariance of $\mathrm{tr}_P$. Thus
\[R_{GL_n/U(n)}(u_1)(A)=\int_{[0,1]}\gamma_A^*(\mathrm{tr}_P)\left(\frac{d}{dt}\right)=\int_{[0,1]}\mathrm{tr}(X)=\mathrm{tr}(X).\] \end{proof}

As a byproduct of the discussion above, we have the following result:
\begin{corollary}\label{cor:injective}
The van Est map $VE_{GL_n}: H(GL_n)\to H(\gln)$ is injective.
\end{corollary}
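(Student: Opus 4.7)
The plan is to factor $(VE_{GL_n})_*$ on cohomology as the composition of an isomorphism (coming from the relative van Est map $VE_{GL_n/U(n)}$) with the inclusion $\iota_* \colon H^\bullet(\gln, U(n)) \to H^\bullet(\gln)$, which has just been described and is injective on generators.

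First I would assemble the three ingredients from earlier in the paper. Since $U(n)$ is compact it is in particular a proper wide Lie subgroupoid of $GL_n$ (viewed as groupoids over a point), so Theorem~\ref{thm:averaging} gives that the inclusion $\iota \colon \sC^\bullet(GL_n)^{U(n)} \to \sC^\bullet(GL_n)$ is a homotopy equivalence, and in particular an isomorphism on cohomology. The polar decomposition of $GL_n$ against $U(n)$ is a Cartan decomposition in the sense of Definition~\ref{def:Cartan} (this is precisely Example~\ref{exm:semisimple} applied to $GL_n$ with Cartan involution $X\mapsto -X^*$ and $\mathfrak{p}$ the space of Hermitian matrices). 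Consequently, Theorem~\ref{prop:rightinversegk} gives that $VE_{GL_n/U(n)}\colon \sC^\bullet(GL_n) \to \sC^\bullet(\gln)_{U(n)-\mathrm{basic}}$ is a homotopy equivalence.

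Next I would combine these with the commutative square provided by Theorem~\ref{cor:VE_relative}, writing $VE_{GL_n}^K$ for the restriction of $VE_{GL_n}$ to the $U(n)$-invariant (resp. $U(n)$-basic) subcomplexes:
\[
\begin{array}{ccc}
\sC^\bullet(GL_n)^{U(n)} & \xrightarrow{\ VE_{GL_n}^K\ } & \sC^\bullet(\gln)_{U(n)-\mathrm{basic}} \\
\iota \downarrow & & \downarrow \iota \\
\sC^\bullet(GL_n) & \xrightarrow{\ VE_{GL_n}\ } & \sC^\bullet(\gln).
\end{array}
\]
By the definition $VE_{GL_n/U(n)} = VE_{GL_n}^K \circ \on{Av}$ and the fact that $\on{Av}$ is a homotopy inverse of $\iota$ on the left, the map $VE_{GL_n}^K$ is itself a quasi-isomorphism.

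To conclude, on cohomology the commutative square reads $(VE_{GL_n})_* \circ \iota_* = \iota_* \circ (VE_{GL_n}^K)_*$. On the right, the first factor is an isomorphism and the second factor $\iota_*$ is injective by the explicit description $u_{2q-1}\mapsto u'_{2q-1}$, so the right-hand side is an injection. Since the left $\iota_*$ is an isomorphism, it can be cancelled, yielding injectivity of $(VE_{GL_n})_*$. The only mild subtlety is to confirm that the classical polar decomposition fits Definition~\ref{def:Cartan} (taking $\psi=\exp$ on $\mathfrak{p}$); once that is observed, everything else is a direct assembly of the quoted theorems.
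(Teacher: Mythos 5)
Your proof is correct and follows essentially the same route as the paper: both factor $(VE_{GL_n})_*$ through the $U(n)$-invariant subcomplex via a commutative square whose vertical legs are inclusions, use Theorems \ref{thm:averaging} and \ref{prop:rightinversegk} to see that the inclusion $\sC^\bullet(GL_n)^{U(n)}\to\sC^\bullet(GL_n)$ and the restricted van Est map are quasi-isomorphisms, and conclude from the injectivity of $\iota_*\colon H(\gln,U(n))\to H(\gln)$ on the explicit generators $u_{2q-1}\mapsto u'_{2q-1}$. The only (immaterial) differences are that you work with $\sC^\bullet(GL_n)^{U(n)}$ where the paper uses the normalized subcomplex $\widetilde{\sC}^\bullet(GL_n)^{U(n)}$, and that $GL_n(\C)$ is not semisimple, so the polar decomposition should be checked against Definition \ref{def:Cartan} directly (as the paper does later in the section) rather than literally via Example \ref{exm:semisimple}.
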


\begin{proof}
This is a direct consequence of the commutativity of the diagram 
\[
\xymatrix{ 
\widetilde{\sC}^\bullet(GL_n)^{U(n)} \ar[d]_{VE_{GL_n}} \ar[r]^-\iota &   \sC^\bullet(GL_n)\ar[d]^{VE_{GL_n}}  \\
\sC^\bullet(\gln)_{U(n)\mathrm{-basic}} \ar[r]^-\iota & \sC^\bullet(\gln),
}
 \]
 and the facts that the lower map is injective in cohomology (see \eqref{eq:inc_H}), and that the upper and left vertical maps are homotopy equivalences by 
 Theorems \ref{thm:averaging} and \ref{prop:rightinversegk}, respectively. 
\end{proof}

\section{Characteristic classes}\label{section:cc}

In this section, we introduce the definition of characteristic classes of a representation of a Lie groupoid, and relate them with the existing notion of characteristic classes of a representation of a Lie algebroid (see \cite{Crainic:vanEst,CrainicFernandes:classes,Fernandes:classes}).\\ 

Throughout, we deal with cohomologies with real coefficients.
  
\subsection{Background on representations}
A \emph{representation} of a Lie groupoid $G\rra M$ is a (left) $G$-action on a (real or complex) vector bundle $V\to M$ so that for any $g\in G$ with $\sz(g)=m,\tz(g)={m'}$, the induced map $V_m\to V_{m'}, \ e\mapsto ge$ is a linear isomorphism.  Alternatively, a representation can be encoded as follows: 
Associated to any vector bundle $V\to M$, we consider the gauge groupoid $GL(V)\rra M$ of linear isomorphism between the fibers
\[GL(V)=\{\phi\colon V_m\to V_{m'}\mid m,m'\in M\ \mathrm{and}\ \phi\ \mathrm{is\ a\ linear\ isomorphism}\},\]
with source and target $\sz(\phi)=m,\tz(\phi)=m'$, and multiplication given by composition. 
A representation of a Lie groupoid $G\rra M$ is equivalently described as a Lie groupoid morphism 
 \[\Phi\colon G\to GL(V)\]
 over the identity map of $M$.
 
 Passing to the infinitesimal picture, a representation of a Lie algebroid $A\to M$ on a vector bundle $V\to M$ is a connection-like operator $\nabla:\Gamma(A)\times\Gamma(V)\to \Gamma(V)$, satisfying the Leibniz rule w.r.t. the anchor map, and so that it is flat. In analogy with the groupoid case a representation can be alternatively described as a Lie algebroid morphism
 \[\Psi\colon A\to \mathfrak{gl}(V)\]
over the identity map, where $\mathfrak{gl}(V)$ is the Lie algebroid of $GL(V)$. 

If $A$ is the Lie algebroid of $G$, then any representation $\Phi:G\to GL(V)$ induces a representation of $A$ by
\begin{equation}\label{eq:induced}\Psi:=\mathrm{Lie}(\Phi)\colon A\to \mathfrak{gl}(V)\end{equation}
with $\mathrm{Lie}(\Phi)=d\Phi|_A.$\\

The following result is used and proved in Section \ref{section:characteristic}, and may be interesting in its own right. We remind the reader that we are dealing with real coefficient in the cohomology.

\begin{corollary}\label{cor:VE-injective} For a complex vector bundle $V,$ the van Est map $VE_{GL(V)}\colon H(GL(V))\to H(\mathfrak{gl}(V))$ is injective.
\end{corollary}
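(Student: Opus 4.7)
The plan is to reduce the claim to Corollary \ref{cor:injective}, exploiting that $GL(V)$ is a transitive Lie groupoid whose isotropy at any chosen point $z\in M$ is canonically identified with $GL(V_z)\cong GL_n$, with isotropy Lie algebra $\mathfrak{gl}(V_z)\cong\gln$.

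First, I would fix $z\in M$ and consider the inclusion of Lie groupoids $\iota_z\colon GL(V_z)\hookrightarrow GL(V)$ as the isotropy at $z$, covering the inclusion $\{z\}\hookrightarrow M$. On the cochain complexes this induces restriction chain maps $r_G\colon \sC^\bullet(GL(V))\to \sC^\bullet(GL(V_z))$ and, at the infinitesimal level (by evaluation at $z$ followed by restriction to $\mathfrak{gl}(V_z)\subset \mathfrak{gl}(V)|_z$), $r_A\colon \sC^\bullet(\mathfrak{gl}(V))\to \sC^\bullet(\mathfrak{gl}(V_z))$. The naturality of the van Est map with respect to such Lie groupoid morphisms -- which follows directly from the Lie derivative formula \eqref{eq:vanEst_expression}, since pullback intertwines the generators $\xi^{(i)}$ of the commuting right actions \eqref{eq:actions} -- then yields a commutative square
\[
\xymatrix{
H^\bullet(GL(V)) \ar[r]^-{VE_{GL(V)}} \ar[d]_{r_G} & H^\bullet(\mathfrak{gl}(V)) \ar[d]^{r_A} \\
H^\bullet(GL_n) \ar[r]^-{VE_{GL_n}} & H^\bullet(\gln).
}
\]

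Next I would combine two ingredients. By Corollary \ref{cor:injective}, the bottom horizontal arrow $VE_{GL_n}$ is injective in cohomology. On the other hand, the left vertical arrow $r_G$ is a cohomology isomorphism: this is the classical fact -- recalled at the start of the proof of Theorem \ref{thm:transitive} and attributed to \cite{Crainic:vanEst} -- that for a transitive Lie groupoid the restriction to the isotropy at any point is a homotopy equivalence, reflecting the Morita equivalence of $GL(V)$ with its isotropy $GL_n$. It follows that $VE_{GL_n}\circ r_G=r_A\circ VE_{GL(V)}$ is injective, and hence so is $VE_{GL(V)}$.

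I do not expect a serious obstacle. The only point requiring care is the naturality square, which is a straightforward verification from the explicit formula for $VE$, or alternatively from the observation that every piece of the van Est double complex (face, degeneracy, and anchor maps) is natural under Lie groupoid morphisms. As an alternative route one could instead invoke Theorem \ref{thm:transitive} applied to $(GL(V),U(V))$ (whose tubular/Cartan structure comes from a Hermitian metric on $V$, via Example \ref{exm:transitive}) together with the factorization $VE_{GL(V)/U(V)}=VE_{GL(V)}\circ\mathrm{Av}$ of Theorem \ref{prop:rightinversegk} and the injectivity of $\iota\colon H(\gln,U(n))\to H(\gln)$ already exploited in Corollary \ref{cor:injective}; this is essentially the same argument recast in the language of relative complexes.
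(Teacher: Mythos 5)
Your argument is correct, but it takes a genuinely different route from the paper's. The paper factors $VE_{GL(V)}$ through the relative complexes of the pair $(GL(V),U(h))$: it uses the left square of \eqref{eq:1}, in which $\iota_{GL(V)}\colon\widetilde{\sC}(GL(V))^{U(h)}\to\sC(GL(V))$ is a homotopy equivalence (Theorem \ref{thm:averaging}) and $VE_{GL(V)}\colon\widetilde{\sC}(GL(V))^{U(h)}\to\sC(\mathfrak{gl}(V))_{U(h)-\mathrm{basic}}$ is a homotopy equivalence (Theorem \ref{prop:rightinversegk}), and then shows that the inclusion $H(\mathfrak{gl}(V),U(h))\to H(\mathfrak{gl}(V))$ is injective by transporting the problem to the isotropy via the $\mathrm{ext}/r$ isomorphisms of Theorem \ref{thm:ext_2} and the explicit $(\gln,U(n))$ computation. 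You instead restrict directly to the isotropy at $z$ and use (i) Morita invariance of differentiable cohomology for transitive groupoids ($r_G$ an isomorphism, which the paper itself recalls in the proof of Theorem \ref{thm:transitive}) and (ii) naturality of the van Est map under this restriction, reducing everything to Corollary \ref{cor:injective}; this is essentially your ``alternative route'' recast without the relative complexes, and it is arguably more economical since it bypasses the averaging, integration, and extension machinery for the pair $(GL(V),U(h))$ (though Corollary \ref{cor:injective} itself already invokes that machinery for $(GL_n,U(n))$). The one step you should not wave away too quickly is the naturality square on the \emph{full} complexes: the formula \eqref{eq:vanEst_expression} involves iterated Lie derivatives evaluated along $M\subset B_pGL(V)$, and one must check that at points of $B_pGL(V_z)$, for sections of the Atiyah algebroid whose value at $z$ lies in $\ker\a_z=\mathfrak{gl}(V_z)$, the generating vector fields $\xi^{(i)}$ are tangent to $B_pGL(V_z)$, so that the computation only depends on $f|_{B_pGL(V_z)}$; one also needs that evaluation at $z$ followed by restriction to $\ker\a_z$ is a cochain map, which holds because $\ker\a$ is a bundle of Lie algebras with pointwise bracket. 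Both verifications go through, and they are of exactly the same nature as the commutativity the paper records (only for the invariant subcomplexes) in Corollary \ref{cor:r-ve}, so your proof is sound.
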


\subsection{Characteristic classes for algebroids revisited}\label{section:characteristic-algebroid}
In this section we focus on an alternative, simple and direct definition of the characteristic classes of representations of Lie algebroids first defined in \cite{Crainic:vanEst} (see also \cite{CrainicFernandes:classes,Fernandes:classes}). Our approach relies on the notion of relative cohomology and its properties. \\

Let $A$ be a Lie algebroid over $M$, $V$ a complex vector bundle over $M$ of rank $n$, and 
 \[\Psi\colon A\to \mathfrak{gl}(V)\]
a representation of $A$ on $V$. Fix a (fiberwise) Hermitian metric $h$ on $V$ and denote by $U(h)\subset GL(V)$ the transitive and proper Lie subgroupoid  
\begin{equation*}
U(h)=\{\phi\colon V_m\to V_{m'}\in GL(V)\mid \phi\colon (V_m,h_m)\to (V_{m'},h_{m'})\  \mathrm{is\ an\ isometry}\}.
\end{equation*}
Fix a point $z\in M$ and an isometry
  \begin{equation}\label{eq:F}F\colon (\C^n, h_0)\to (V_z,h_z),\end{equation}
where $h_0$ is the standard Hermitian metric of $\C^n$. Note that conjugation by $F$
\[C_F\colon GL(V_z)\to GL_n,\ \ C_F(\phi):= F^{-1}\circ\phi\circ F\]
is a Lie group isomorphism sending the Lie subgroup $U(h_z)$ onto the unitary group $U(n)$.

Consider the following composition of cochain maps
\begin{equation}\label{eq:ch}
\xymatrix{
M_\Psi\colon\sC^\bullet(\gln)_{U(n)-\mathrm{basic}} \ar[r]^-{C_F^*} & \sC^\bullet(\mathfrak{gl}(V_z))_{U(h_z)-\mathrm{basic}} \ar[r]^-{\mathrm{ext}}& \sC^\bullet(\mathfrak{gl}(V))_{U(h)-\mathrm{basic}}\ar[r]^-{\Psi^*} & \sC^\bullet(A).
}
\end{equation}
Recall from Section \ref{section:example} that
$$H(\gln,U(n))=\wedge(u_1,u_3,\ldots,u_{2n-1}),$$
where the $u's$ are explicitly defined at the cochain level by trace operators \eqref{eq:u}.

\begin{definition}\label{def:classes} The {\it characteristic classes} of the representation of the Lie algebroid $A$ on the complex vector bundle $V$  are defined as
 \[u_{2p-1}(V):=[M_\Psi(u_{2p-1})]\in H^{2p-1}(A), \ \ 1\leq p\leq n, \]
 where $n$ is the rank of $V$. If $V$ is a real vector bundle then they are defined as $u_{2p-1}(V):=u_{2p-1}(V_\C),$
 where $V_\C$ is the complexification of $V$. 
\end{definition}

\begin{remark}
This definition should be compared with the explicit approach in \cite{Crainic:vanEst}. By inspection, Definition \ref{def:classes} agrees with \cite[formula (45)]{Crainic:vanEst} in local coordinates.
\end{remark}

\begin{lemma}\label{prop:ch}
The characteristic classes of the representation $\Psi$ do not depend on the choice of $z\in M$, $F$ as in \eqref{eq:F}, or the Hermitian metric $h$ on $V$. 
\end{lemma}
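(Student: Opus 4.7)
The plan is to prove the three independences in turn: of the isometry $F$, of the base point $z$, and of the Hermitian metric $h$. The first two follow directly from the structure developed earlier; the third is the main technical hurdle.

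For the $F$-independence: any two isometries $F,F'\colon(\mathbb{C}^n,h_0)\to(V_z,h_z)$ differ by a unique $U\in U(n)$ via $F'=F\circ U$, and a direct computation gives $C_{F'}=C_U\circ C_F$ as Lie group isomorphisms $GL(V_z)\to GL_n$. Since $u_{2p-1}=i^{p-1}\Phi_{2p-1}$ is built from a trace polynomial and hence is $\operatorname{Ad}(GL_n)$-invariant, one has $C_U^*u_{2p-1}=u_{2p-1}$ for every $U\in U(n)$, so that $C_{F'}^*u_{2p-1}=C_F^*(C_U^*u_{2p-1})=C_F^*u_{2p-1}$.

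For the $z$-independence: given $z,z'\in M$, transitivity of $K=U(h)$ furnishes $k\in K$ with $s(k)=z$ and $t(k)=z'$, and one takes $F':=k\circ F$ as the isometry at $z'$ (the previous step shows the answer does not depend on this particular choice). Unpacking \eqref{eq:ext_algebroid} with a local bisection of $K$ whose value at $z'$ is $k$, one obtains $r_{z'}\circ\mathrm{ext}_z=C_k^*$ on basic cochains. Combined with the identity $C_k^*\circ C_F^*=C_{F'}^*$ (which follows from $F^{-1}k^{-1}\phi kF=(kF)^{-1}\phi(kF)$), this gives $r_{z'}(\mathrm{ext}_z(C_F^*u_{2p-1}))=C_{F'}^*u_{2p-1}$. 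Since by Theorem~\ref{thm:ext_2} the restriction $r_{z'}$ and the extension $\mathrm{ext}_{z'}$ are mutually inverse, one concludes $\mathrm{ext}_z(C_F^*u_{2p-1})=\mathrm{ext}_{z'}(C_{F'}^*u_{2p-1})$ in $\sC(\mathfrak{gl}(V))_{U(h)\mathrm{-basic}}$, so the same cocycle arises after applying $\Psi^*$.

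For the $h$-independence (the main obstacle): let $h_0,h_1$ be Hermitian metrics on $V$, and let $T\in\mathrm{Aut}(V)$ be the positive $h_0$-self-adjoint bundle automorphism with $T^*h_1=h_0$ (concretely, $T=(h_0^{-1}h_1)^{-1/2}$). The path $T_t:=T^t$, $t\in[0,1]$, lies inside the positive $h_0$-self-adjoint cone and hence inside $\Bis(GL(V))$, connecting $\mathrm{id}_V$ to $T$. Viewing $T$ as a bisection, conjugation $C_T$ sends $U(h_0)$ onto $U(h_1)$, and a direct computation at $z$---replacing $F_1$ by $F_0:=T_z^{-1}F_1$---gives $C_T^*\tilde u^{(h_1)}=\tilde u^{(h_0)}$, where $\tilde u^{(h_i)}:=\mathrm{ext}(C_{F_i}^*u_{2p-1})$ is regarded as a cocycle in $\sC(\mathfrak{gl}(V))$. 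To conclude $[\tilde u^{(h_0)}]=[\tilde u^{(h_1)}]$ in $H(\mathfrak{gl}(V))$ one invokes a Cartan-magic-formula argument along the path $T_t$: writing $\eta_t\in\Gamma(\mathfrak{gl}(V))$ for its time derivative, Lemma~\ref{cor:final} together with $\L_{\eta_t}=\d_{CE}\iota_{\eta_t}+\iota_{\eta_t}\d_{CE}$ yields, after integrating and using that $\tilde u^{(h_1)}$ is closed, the explicit primitive $C_T^*\tilde u^{(h_1)}-\tilde u^{(h_1)}=\d_{CE}\bigl(\int_0^1\iota_{\eta_t}C_{T_t}^*\tilde u^{(h_1)}\,dt\bigr)$. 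Applying the cochain map $\Psi^*$ transports this homotopy to $\sC(A)$, giving $u_{2p-1}(V;h_0)=u_{2p-1}(V;h_1)$ in $H(A)$. The main subtlety lies in this last step: constructing the path $T_t$ as a smooth family of bisections (handled cleanly by the polar-decomposition choice above) and verifying that the transgression operator $\int_0^1\iota_{\eta_t}C_{T_t}^*(\cdot)\,dt$ produces a valid Chevalley--Eilenberg homotopy whose image under $\Psi^*$ witnesses the equality in $H(A)$.
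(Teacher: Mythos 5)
Your proposal is correct and follows essentially the same route as the paper: independence of $z$ and $F$ via the identity $r\circ\mathrm{ext}=C_{b}^{*}$ together with the $U(n)$-invariance of the generators $u_{2p-1}$, and independence of $h$ by producing an isometry $(V,h_0)\to(V,h_1)$ over $\mathrm{id}_M$, connecting it to the identity by a path of automorphisms, and showing the induced $C^{*}$ is chain homotopic to the identity via the transgression operator $\omega\mapsto\int_0^1\iota_{\eta_t}C_{T_t}^{*}(\omega)\,dt$ before pushing forward by $\Psi^{*}$. The only (welcome) addition is that you make the path explicit via the polar decomposition $T_t=T^{t}$, where the paper merely asserts that a smooth path in $GL(V)_M$ exists.
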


To prove Lemma \ref{prop:ch} we will need the following result: 

\begin{lemma}\label{lemma:ch} Fix $z\in M$ and $F$ as in \eqref{eq:F}, then 
\begin{itemize}[leftmargin=*]
\item for any other point $m\in M$ and isometry $F'\colon (\C^n, h_0)\to (V_m,h_m)$, $$\mathrm{ext}_z\circ C_F^*=\mathrm{ext}_m\circ C_{F'}^*;$$
\item for any other Hermitian metric $h'$ of $V$, and any fiberwise isometry $H:(V,h)\to (V,h')$ covering the identity,
$$C_H^*\colon\sC(\mathfrak{gl}(V))\to \sC(\mathfrak{gl}(V))$$
is homotopic to the identity. 
\end{itemize}
\end{lemma}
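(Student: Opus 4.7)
My plan for part (a) is to exploit Theorem \ref{thm:ext_2}: both compositions $\mathrm{ext}_z \circ C_F^*$ and $\mathrm{ext}_m \circ C_{F'}^*$ land in $\sC^\bullet(\mathfrak{gl}(V))_{U(h)-\mathrm{basic}}$, and on this subcomplex the restriction $r_z$ to the fiber at $z$ is an isomorphism with inverse $\mathrm{ext}_z$. Hence it suffices to check that the two compositions agree after restriction to $z$. Clearly $r_z \circ \mathrm{ext}_z \circ C_F^* = C_F^*$. For the other side, pick any bisection $b$ of $U(h)$ with $\sz(b(z)) = m$, so that $b(z)\colon V_m \to V_z$ is a linear isometry; a direct computation at the level of group homomorphisms yields
\[
C_{F'} \circ C_b \;=\; C_{b(z) F'} \;=\; C_u \circ C_F \colon GL(V_z) \to GL_n,
\]
where $u := F^{-1} \circ b(z) \circ F' \in U(n)$ is well-defined because both $F$ and $b(z) F'$ are isometries $(\C^n, h_0) \to (V_z, h_z)$. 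Pulling back and using that any $\beta \in \sC^\bullet(\gln)_{U(n)-\mathrm{basic}}$ is $U(n)$-invariant gives $(C_{F'} \circ C_b)^* \beta = C_F^*(C_u^* \beta) = C_F^*\beta$, concluding part (a).

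For part (b), the strategy is to realize $C_H^*$ as the endpoint of a smooth one-parameter family of automorphisms of $GL(V)$ originating at the identity, and then to extract the chain homotopy via Cartan's magic formula. The construction of the path exploits that the space of Hermitian metrics on $V$ is convex: setting $h_t := (1-t)h + t h'$ gives a smooth family of Hermitian metrics from $h$ to $h'$. Defining $A_t \in \Gamma(\mathrm{End}(V))$ by $h_t(v,w) = h(A_t v, w)$, one has $A_t = (1-t) I + t A$, which is positive self-adjoint; taking the smooth square root yields $H_t := A_t^{1/2}$, a smooth path of bundle automorphisms (hence bisections of $GL(V)$) from $H_0 = \mathrm{id}$ to a canonical isometry $(V,h) \to (V,h')$. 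A general isometry $H$ differs from this canonical choice by a unitary $U \in \Gamma(U(h))$, which can be absorbed into an extension of the path.

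Given a smooth path $H_t$ of bisections from $\mathrm{id}$ to $H$, set $\xi_t := \dot H_t \circ H_t^{-1} \in \Gamma(\mathfrak{gl}(V))$. Differentiating the $\Bis(GL(V))$-action on $\sC^\bullet(\mathfrak{gl}(V))$ and invoking Lemma \ref{cor:final} yields $\tfrac{d}{dt} C_{H_t}^* = C_{H_t}^* \circ \L_{\xi_t}$. By Cartan's formula $\L_{\xi_t} = [\d_{CE}, \iota_{\xi_t}]$, combined with the fact that each $C_{H_t}^*$ commutes with $\d_{CE}$ (see \eqref{eq:Bis_d}), integrating from $t = 0$ to $t = 1$ gives
\[
C_H^* - \mathrm{id} \;=\; [\d_{CE},\, \mathsf{K}], \qquad \mathsf{K} := \int_0^1 C_{H_t}^* \circ \iota_{\xi_t}\, dt,
\]
producing the desired chain homotopy.

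The main obstacle is the construction of a smooth global path from the identity to an \emph{arbitrary} isometry $H$: the polar factor $A^{1/2}$ is automatically connected to $\mathrm{id}$ via $A^{t/2}$, but the unitary ambiguity $U \in \Gamma(U(h))$ could in principle represent a nontrivial class in $\pi_0(\Gamma(U(h)))$ coming from the topology of $M$. For the application of Lemma \ref{lemma:ch} to the independence statement in Lemma \ref{prop:ch}, it suffices to use the canonical choice $H_t := A_t^{1/2}$, since this already produces a specific isometry between $(V,h)$ and $(V,h')$ whose conjugation action is chain-homotopic to the identity by the integrated Cartan formula above.
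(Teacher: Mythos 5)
Your proof follows essentially the same route as the paper's: part (a) rests on the identity $r\circ\mathrm{ext}=C_{b(\cdot)}^*$ for a unitary bisection together with $C_{Fu}=C_u\circ C_F$ and the triviality of the $U(n)$-coadjoint action on the basic subcomplex (you restrict at $z$ where the paper restricts at $m$), and part (b) is the same Cartan-formula homotopy $C_H^*-\mathrm{id}=[\d_{CE},\mathsf{K}]$ along a path of bundle automorphisms commuting with $\d_{CE}$. Two remarks: your explicit construction of the path via convexity of Hermitian metrics, and your caveat that an arbitrary isometry $H$ need not lie in the identity component of $\Gamma(GL(V)_M)$ (so the argument is only guaranteed for the canonical polar isometry, which indeed suffices for Lemma \ref{prop:ch}), supply a detail the paper's ``let $t\mapsto H_t$ be a smooth path'' leaves implicit; and with $\xi_t=\dot H_t\circ H_t^{-1}$ the correct order is $\tfrac{d}{dt}C_{H_t}^*=\L_{\xi_t}\circ C_{H_t}^*$ (your order $C_{H_t}^*\circ\L_{\xi_t}$ corresponds to $\xi_t=H_t^{-1}\circ\dot H_t$), a harmless transposition that does not affect the resulting homotopy.
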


\begin{proof}[Proof of Lemma \ref{prop:ch}]
By the first item of Lemma \ref{lemma:ch} we already know that the characteristic classes do not depend on the choice of either point or isometry \eqref{eq:F}, so fix $z\in M$ and the isometry $F$ as in \eqref{eq:F}. Denote the inclusions by
\[\iota_h\colon\sC(\mathfrak{gl}(V))_{U(h)-\mathrm{basic}}\to \sC(\mathfrak{gl}(V)),  \ \ \iota_{h'}\colon\sC(\mathfrak{gl}(V))_{U(h')-\mathrm{basic}}\to \sC(\mathfrak{gl}(V)).\] 
Consider the diagram 
\[
\xymatrix{
& \sC(\mathfrak{gl}(V_z))_{U(h_z)-\mathrm{basic}} \ar[r]^-{\mathrm{ext}} \ar[dd]^-{C_{H_z}^*} & \sC(\mathfrak{gl}(V))_{U(h)-\mathrm{basic}} \ar[r]^-{\iota_h} \ar[dd]^-{C_{H}^*}& \sC(\mathfrak{gl}(V))  \ar[dd]^-{C_H^*}\\
\sC(\gln)_{U(n)-\mathrm{basic}} \ar[ru]^-{C_F^*} \ar[rd]^-{C_{F'}^*} & & & \\
& \sC(\mathfrak{gl}(V_z))_{U(h'_z)-\mathrm{basic}} \ar[r]^-{\mathrm{ext}} & \sC(\mathfrak{gl}(V))_{U(h')-\mathrm{basic}} \ar[r]^-{\iota_{h'}} & \sC(\mathfrak{gl}(V)),
}
\]
with $F':=H_z\circ F:(\C^n,h_0)\to (V_z,h'_z)$. Then the left triangle is clearly commutative, and also the right square is commutative. The middle square is also commutative as $C_{H_z}^*\circ r=r\circ C_H^*$, where $r$ are the restriction maps, and the fact that $r$ and $\mathrm{ext}$ are inverse to each other. 

Denote by $u_{2p-1}(V,h)$ ($u_{2p-1}(V,h')$ respectively) the characteristic classes of Definition \ref{def:classes} defined by the composition \eqref{eq:ch} using the Hermitian metric $h$ ($h'$ respectively). 
We compute:
\begin{equation*}
\begin{split}
u_{2p-1}(V,h)&=\Psi^*[\iota _h\circ \mathrm{ext}\circ C_F^*(u_{2p-1})]=\Psi^*[C_H^*\circ \iota _h\circ \mathrm{ext}\circ C_F^*(u_{2p-1})]\\
&=\Psi^*[\iota _{h'}\circ \mathrm{ext}\circ C_{F'}^*(u_{2p-1})]=[u_{2p-1}(V,h')],
\end{split}
\end{equation*}
where in the second equality we used the second item of Lemma \ref{lemma:ch}.
\end{proof}

\begin{proof}[Proof of Lemma \ref{lemma:ch}]
For the first item, note that, since the restriction map $$r_m\colon\sC(\mathfrak{gl}(V))_{U(h)-\mathrm{basic}} \to \sC(\mathfrak{gl}(V_m))_{U(h_m)-\mathrm{basic}}$$ is inverse to the extension map $\mathrm{ext}_m$, the desired equality holds if and only if 
$$C_{F'^{-1}}^*\circ r_m\circ\mathrm{ext}_z\circ C_F^*=\id_{\sC^\bullet(\gln)_{U(n)-\mathrm{basic}}}.$$
To check this last equality, note that we can write $r_m\circ \mathrm{ext}_z$ as the pullback by conjugation 
$$r_m\circ \mathrm{ext}_z=C_{b(m)}^*,$$
 where $b:M\to U(h)$ is any bisection with the property that $\sz(b(m))=z$ (i.e.\ $b(m):(V_z,h_z)\to (V_m,h_m)$ is an isometry).
 Therefore,
 \begin{equation*}
 C_{F'^{-1}}^*\circ r_m\circ\mathrm{ext}_z\circ C_F^*=C_{F'^{-1}}^*\circ C_{b(m)}^*\circ C_F^* =C^*_{F'^{-1}\circ b(m)\circ F}.
 \end{equation*}
 As $U:=F'^{-1}\circ b(m)\circ F$ is the composition of isometries 
 \[(\C^n, h_0)\overset{F}{\longrightarrow} (V_z,h_z)\overset{b(m)}{\longrightarrow} (V_m,h_m)\overset{F'^{-1}}{\longrightarrow} (\C^n,h_0), \]
 $C_{U}\colon GL_n\to GL_n$ is  conjugation by the unitary map $U\in U(n)$, $C^*_U:\gln^*\to \gln ^*$ is the coadjoint action by $U$, and hence $C^*_U\colon \sC(\gln)_{U(n)-\mathrm{basic}}\to \sC(\gln)_{U(n)-\mathrm{basic}} $ is the identity map on the $U(n)$-basic subcomplex, as we desired.\\
 
 For the second item, we will construct a homotopy $\mathsf{h}$ between the identity map and $C_H^*$ as follows. Denote by $GL(V)_M$ the space of linear automorphisms of $V$ fixing the base $M$. Let $t\mapsto H_t\in GL(V)_M$ be a smooth path connecting the identity and $H$:
 \[H_t\colon V\overset{\simeq}{\longrightarrow} V, \ \ H_0=\id_V,\ \ H_1=H. \]
 Observe that $GL(V)_M\subset GL(V)$ is the Lie groupoid given by the union of the isotropy Lie groups of $GL(V)$, and that $H_t\in\mathrm{Bis}(GL(V)_M)$.
 
Infinitesimally, the Lie algebroid $\mathrm{Lie}(GL(V)_M)=\mathfrak{gl}(V)_M$ --the space of linear transformations from $V$ into $V$ fixing the base $M$-- is a Lie subalgebroid of $\mathfrak{gl}(V)$ given by the kernel of the anchor map, and $H_t$ induces a path of sections $t\mapsto X_t\in \mathfrak{gl}(V)_M$ defined by 
\[\frac{d}{dt}H_t=X_t\circ H_t.\]
We can then have the induced Lie derivatives $\L_{X_t}=[\d_{CE},\iota_{X_t}]$ along $X_t$,
$\L_{X_t}\colon\sC^\bullet(\mathfrak{gl}(V))\to \sC^\bullet(\mathfrak{gl}(V))$,
satisfying the property
\begin{equation}\label{eq:derivative}
\frac{d}{dt}C_{H_t}^*(\omega)=\L_{X_t}\big(C_{H_t}^*(\omega)\big), \ \ \omega\in \sC^\bullet(\mathfrak{gl}(V)).
\end{equation}

Define $\mathsf{h}\colon\sC^\bullet(\mathfrak{gl}(V))\to\sC^{\bullet-1}(\mathfrak{gl}(V))$ by 
\[\mathsf{h}(\omega):=\int_0^1\iota_{X_t}\big(C_{H_t}^*(\omega)\big)\ dt. \]
We compute:
\begin{equation*}
\begin{split}
C_H^*(\omega)-\omega&=\int_0^1\frac{d}{dt}C^*_{H_t}(\omega)\ dt= \int_0^t  \L_{X_t}\big(C_{H_t}^*(\omega)\big) \ dt= \int_0^1\d_{CE}\iota_{X_t}\big(C_{H_t}^*(\omega)\big) +\iota_{X_t}\d_{CE}\big(C_{H_t}^*(\omega)\big) \ dt\\
&=\d_{CE}\mathsf{h}(\omega)+\mathsf{h}\d_{CE}(\omega),
\end{split}
\end{equation*}
where in the last equality we used that $\d_{CE}$ commutes with $C^*_{H_t}$ since $C_{H_t}\colon GL(V)\to GL(V)$ is a groupoid morphism for each $t$. 
\end{proof}

\subsection{Characteristic classes for Lie groupoids}\label{section:characteristic}

Let $G$ be a Lie groupoid over $M$, $V$ a complex vector bundle over $M$ of rank $n$, and 
 \[\Phi\colon G\to GL(V)\]
 a representation of $G$ on $V$. Fix a Hermitian metric $h$ of $V$, a point $z\in M$ and an isometry $F\colon(\C^n,h_0)\to (V_z,h_z)$. Using the same conventions of Section \ref{section:characteristic-algebroid}, consider the following composition of cochain maps
\begin{equation}\label{eq:classes_composition}
\xymatrix{
M_\Phi: \widetilde{\sC}^\bullet(GL_n)^{U(n)} \ar[r]^-{C_F^*} & \widetilde{\sC}^\bullet(GL(V_z))^{U(h_z)}   \ar[r]^-{\mathrm{ext}} & \widetilde{\sC}^\bullet(GL(V))^{U(h)} \ar[r]^-{\Phi^*} & \sC^\bullet (G).
}
\end{equation}
From Section \ref{section:example}, we know that 
\begin{equation}\label{eq:generators}H(GL_n)=\wedge (v_1,\ldots, v_{2n-1} ),\end{equation}
where $v_{2p-1}:=R_{GL_n/U(n)}(u_{2p-1})$ is the cochain of $\widetilde{\sC}^\bullet(GL_n)^{U(n)}$ given in Theorem \ref{eq:group}.

\begin{definition}\label{def:classes_groupoid}
The {\it characteristic classes} of the representation $\Phi$ of the Lie groupoid $G$ on the complex vector bundle $V$ are defined as
 \[v_{2p-1}(V):=M_\Phi(v_{2p-1})\in H^{2p-1}(G), \ \ 1\leq p\leq n, \]
 where $n$ is the rank of $V$. If $V$ is a real representation then they are defined as $v_{2p-1}(V):=v_{2p-1}(V_\C),$
 where $V_\C$ is the complexification of $V$. 
\end{definition} 

\begin{lemma}\label{lemma:ch-2}
The characteristic classes of the representation $\Phi$ do not depend on the choice of $z\in M$, $F$ as in \eqref{eq:F}, nor the Hermitian metric $h$ of $V$.
\end{lemma}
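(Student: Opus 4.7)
The proof parallels that of Lemma~\ref{prop:ch}, with the main new ingredient being that the independence from the Hermitian metric is reduced to the algebroid statement (second item of Lemma~\ref{lemma:ch}) via the injectivity of $VE_{GL(V)}$ established in Corollary~\ref{cor:VE-injective}, rather than by constructing a direct chain-level homotopy on $\sC(GL(V))$.

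For the independence of the base point $z$ and isometry $F$, we adapt the first item of Lemma~\ref{lemma:ch} verbatim to the groupoid setting. Given any $m \in M$ and isometry $F' \colon (\C^n, h_0) \to (V_m, h_m)$, choose a bisection $b \in \mathrm{Bis}(U(h))$ with $\sz(b(m)) = z$ (which exists by transitivity of $U(h)$). A direct computation from formula \eqref{eq:ext} yields $r_m \circ \mathrm{ext}_z = C_{b(m)}^*$, and since $\mathrm{ext}_m$ is inverse to $r_m$ by Theorem~\ref{thm:ext}, it suffices to verify $C_{b(m)}^* \circ C_F^* = C_{F'}^*$ on $\widetilde{\sC}(GL_n)^{U(n)}$. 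Since $b(m) \circ F \colon \C^n \to V_m$ is an isometry, there is a unique $U := F'^{-1} \circ b(m) \circ F \in U(n)$ with $b(m) \circ F = F' \circ U$; this yields $C_{b(m)}^* \circ C_F^* = C_{F'}^* \circ C_U^*$. The key observation is that $C_U^*$ acts as the identity on $\widetilde{\sC}(GL_n)^{U(n)}$: consecutively applying the $p+1$ commuting $U(n)$-actions of \eqref{eq:actions} with the same element $U$ produces precisely the conjugation action $(g_1, \ldots, g_p) \mapsto (U^{-1} g_1 U, \ldots, U^{-1} g_p U)$, so $U(n)$-invariance forces $C_U^* f = f$. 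Applying $\mathrm{ext}_m$ and then $\Phi^*$ gives the desired independence.

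For the independence of the Hermitian metric, fix $z$ and $F$, and let $h'$ be another metric with isometry $H \colon (V, h) \to (V, h')$ fixing $M$; set $F' := H_z \circ F$. The commutative-diagram argument of Lemma~\ref{prop:ch}'s proof, transported to groupoid cochains, yields the chain-level identity $\iota_{h'} \circ \mathrm{ext}_{h'} \circ C_{F'}^* = C_H^* \circ \iota_h \circ \mathrm{ext}_h \circ C_F^*$ in $\sC(GL(V))$. Applying the chain map $\Phi^*$, it suffices to show that for any cocycle $\omega \in \widetilde{\sC}(GL(V))^{U(h)}$ one has $[\omega] = [C_H^* \omega]$ in $H(GL(V))$; pushing forward by $\Phi^*$ then equates the two characteristic classes in $H(G)$. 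To establish this cohomology equality, we invoke the injectivity of $VE_{GL(V)}$ from Corollary~\ref{cor:VE-injective}: by naturality of van Est with respect to the Lie groupoid automorphism $C_H$, we have $VE_{GL(V)} \circ C_H^* = \mathrm{Lie}(C_H)^* \circ VE_{GL(V)}$. The second item of Lemma~\ref{lemma:ch} gives that $\mathrm{Lie}(C_H)^*$ is chain-homotopic to the identity on $\sC(\mathfrak{gl}(V))$, so $VE_{GL(V)}([C_H^*\omega]) = VE_{GL(V)}([\omega])$ in $H(\mathfrak{gl}(V))$; injectivity then yields $[C_H^*\omega] = [\omega]$ in $H(GL(V))$, as required.

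The principal technical obstacle bypassed by this strategy is the construction of a direct chain homotopy between $C_H^*$ and the identity on $\sC(GL(V))$. Such a construction would require a groupoid analog of the Cartan formula $\mathcal{L}_X = \d_{CE}\iota_X + \iota_X\d_{CE}$ used in the proof of Lemma~\ref{lemma:ch}, which is substantially more delicate at the groupoid level since there is no direct interior product on $\sC(GL(V))$. Our argument instead lifts the algebroid homotopy from Lemma~\ref{lemma:ch} up to the groupoid cohomology using van Est naturality together with the injectivity result of Corollary~\ref{cor:VE-injective}, which crucially relies on the hypothesis that $V$ is complex.
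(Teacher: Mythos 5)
Your proof is correct and, on the essential difficulty (independence of the Hermitian metric), follows the same route as the paper: the algebroid-level statement of Lemmas \ref{prop:ch} and \ref{lemma:ch} is lifted to a cohomological identity in $H(GL(V))$ via the injectivity of $VE_{GL(V)}$ from Corollary \ref{cor:VE-injective}, and then pushed forward by $\Phi^*$. The only deviation is that you establish independence of $z$ and $F$ by a direct chain-level identity on $\widetilde{\sC}(GL_n)^{U(n)}$ (using that the $p+1$ commuting $U(n)$-actions applied with the same $U$ compose to conjugation by $U$), whereas the paper routes this case through $VE_{GL(V)}$ as well; this is a slight chain-level strengthening but not a genuinely different argument.
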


Before proving Lemma \ref{lemma:ch-2} and Corollary \ref{cor:VE-injective}, let us point out the relation between the characteristic classes of a representation of $G$, and those of the induced representation on its algebroid:

\begin{proposition}\label{prop:gpd-to-algb}
Let $\Psi\colon A\to \mathfrak{gl}(V)$ be the induced representation \eqref{eq:induced} of $\Phi:G\to GL(V)$ on the Lie algebroid $A$ of $G$. Then the characteristic classes of $\Psi$ are the image of the characteristic classes of $\Phi$ via the van Est map $VE_G\colon H(G)\to H(A)$:
\[VE_G(v_{2p-1}(V))=u_{2p-1}(V), \ \ 1\leq p\leq n. \]
\end{proposition}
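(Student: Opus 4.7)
The plan is to verify the identity $VE_G(v_{2p-1}(V))=u_{2p-1}(V)$ directly at the cochain level by chasing naturality through each building block of the defining compositions \eqref{eq:classes_composition} and \eqref{eq:ch}, and reducing to a single identity for the pair $(GL_n,U(n))$.

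First I would invoke naturality of van Est with respect to the groupoid morphism $\Phi\colon G\to GL(V)$. From the explicit formula \eqref{eq:vanEst_expression}, $VE_G\circ\Phi^*=\Psi^*\circ VE_{GL(V)}$, since $\Phi$ intertwines the $p+1$ commuting $G$-actions on $B_\bullet G$ with the corresponding $GL(V)$-actions on $B_\bullet GL(V)$, and hence also their infinitesimal generators $\xi^{(i)}$. Next I would apply Corollary \ref{cor:r-ve} to the transitive wide subgroupoid $U(h)\subset GL(V)$ with isotropy group $U(h_z)$ at $z$, obtaining $VE_{GL(V)}\circ\mathrm{ext}=\mathrm{ext}\circ VE_{GL(V_z)}$ on the normalized invariant subcomplexes. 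Third, naturality of van Est with respect to the Lie group isomorphism $C_F\colon GL(V_z)\to GL_n$ gives $VE_{GL(V_z)}\circ C_F^*=C_F^*\circ VE_{GL_n}$.

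The final ingredient is the Cartan decomposition of $(GL_n,U(n))$ coming from the polar decomposition discussed in Section \ref{section:example}: by the second part of Theorem \ref{prop:rightinversegk},
\[VE_{GL_n}\circ R_{GL_n/U(n)}=\mathrm{id}_{\sC^\bullet(\gln)_{U(n)\mathrm{-basic}}}.\]
Chaining these four identities together yields
\[VE_G\bigl(\Phi^*\,\mathrm{ext}\,C_F^*\,R_{GL_n/U(n)}(u_{2p-1})\bigr)=\Psi^*\,\mathrm{ext}\,C_F^*(u_{2p-1}),\]
which is exactly the equality of cochains $VE_G(v_{2p-1}(V))=u_{2p-1}(V)$, and a fortiori of cohomology classes.

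I expect no serious obstacle here: the argument is a clean diagram chase in which every intermediate identity has already been established in the excerpt (naturality of $VE_G$ from the explicit formula \eqref{eq:vanEst_expression}, compatibility of $\mathrm{ext}$ with $VE$ via Corollary \ref{cor:r-ve}, and the right-inverse property of Theorem \ref{prop:rightinversegk}). The only point worth checking in passing is that the pullbacks $C_F^*$, $\mathrm{ext}$ and $\Phi^*$ preserve normalization, so that the normalized version of Corollary \ref{cor:r-ve} applies; this is immediate since $C_F$ is a group isomorphism, $\mathrm{ext}$ is a pullback by conjugation sending units to units, and $\Phi$ is a unit-preserving groupoid morphism.
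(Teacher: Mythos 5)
Your proof is correct and follows essentially the same route as the paper: it establishes commutativity of the same three squares (naturality of $VE$ under $\Phi^*$ and $C_F^*$, plus Corollary \ref{cor:r-ve} for the extension map) and combines them with the right-inverse identity $VE_{GL_n}\circ R_{GL_n/U(n)}=\mathrm{id}$ from Theorem \ref{prop:rightinversegk}. You in fact justify each square slightly more explicitly than the paper does, but the argument is the same diagram chase.
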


\begin{proof}
This follows by the fact that $VE_{GL_n}(v_{2p-1})=u_{2p-1}, \ \ 1\leq p\leq n$ (as $VE_{GL_n}\circ R_{GL_n/U(n)}=\id_{\sC^\bullet(\gln)_{U(n)-\mathrm{basic}}}$) and the commutativity of the diagram
\begin{equation}\label{eq:diagram-VE}
\xymatrix{
\widetilde{\sC}(GL_n)^{U(n)} \ar[d]^-{VE_{GL_n}} \ar[r]^-{C_F^*} & \widetilde{\sC}(GL(V_z))^{U(h_z)}  \ar[d]^-{VE_{GL(V_z)}} \ar[r]^-{\mathrm{ext}} & \widetilde{\sC}(GL(V))^{U(h)} \ar[d]^{VE_{GL(V)}} \ar[r]^-{\Phi^*}& \sC(G) \ar[d]^-{VE_G} \\
\sC(\gln)_{U(n)-\mathrm{basic}} \ar[r]^-{C_F^*} & \sC(\mathfrak{gl}(V_z))_{U(h_z)-\mathrm{basic}} \ar[r]^-{\mathrm{ext}}& \sC(\mathfrak{gl}(V))_{U(h)-\mathrm{basic}} \ar[r]^-{\Psi^*}& \sC(A),
}
\end{equation}
where the middle square commutes by Corollary \ref{cor:injective}.
\end{proof}

We now proceed to the proof of the injectivity of the van Est map $VE_{GL(V)}\colon H(GL(V))\to H(\mathfrak{gl}(V))$.

\begin{proof}[Proof of Corollary \ref{cor:VE-injective}] Consider the commutative diagram
\begin{equation}\label{eq:final}
\xymatrix{ \widetilde{\sC}(GL(V))^{U(h)} \ar[d]^{VE_{GL(V)}} \ar[r]^-{\iota_{GL(V)}}& \sC(GL(V))  \ar[d]^-{VE_{GL(V)}}\\
 \sC(\mathfrak{gl}(V))_{U(h)-\mathrm{basic}}  \ar[r]^-\iota & \sC(\mathfrak{gl}(V)) .
}
\end{equation}

The upper map is a homotopy equivalence by Theorem \ref{thm:averaging}.
The left map is also a homotopy equivalence: Set $G:=GL(V)$ and $K:=U(h)$ in Corollary \ref{cor:r-ve}. In the left commutative diagram of Corollary \ref{cor:r-ve} we have that the restriction maps $r$ are always homotopy equivalences and $VE_{GL(V_z)}$ -- the van Est map at the level of the isotropies -- is a homotopy equivalence by Theorem \ref{prop:rightinversegk} (see also Section \ref{section:example}). Hence $VE_{GL(V)}$ is a homotopy equivalence.

On the other hand, $\iota\colon H(\mathfrak{gl}(V)),U(h))\to H(\mathfrak{gl}(V))$ is injective: The inclusion\linebreak $\iota_z\colon H(\mathfrak{gl}(V_z)),U(h_z))\to H(\mathfrak{gl}(V_z))$ is injective (see \eqref{eq:inc_H}), and the restriction map $r\colon\sC(\mathfrak{gl}(V))\to \sC(\mathfrak{gl}(V_z))$ is a homotopy equivalence (see the proof of Theorem \ref{thm:transitive} with the transitive groupoid $G=GL(V)$). Hence, by the commutativity of the diagram
\[
\xymatrix{ 
\sC(\mathfrak{gl}(V_z))_{U(h_z)\mathrm{-basic}}  \ar[r]^-{\iota_z} &   \sC(\mathfrak{gl}(V_z))  \\
\sC(\mathfrak{gl}(V))_{U(h)\mathrm{-basic}} \ar[r]^-\iota \ar[u]_-{r}& \sC(\mathfrak{gl}(V)) \ar[u]_-{r},
}
 \]
$\iota$ is injective at the level of cohomology. Hence, by the commutativity of \eqref{eq:final}, $VE_{GL(V)}\colon H(GL(V))\to H(\mathfrak{gl}(V))$ is injective.
\end{proof}

\begin{proof}[Proof of Lemma \ref{lemma:ch-2}]
The right square of the diagram \eqref{eq:diagram-VE} factors through the inclusions as 
\begin{equation}\label{eq:1}
\xymatrix{ \widetilde{\sC}(GL(V))^{U(h)} \ar[d]^{VE_{GL(V)}} \ar[r]^-{\iota_{GL(V)}}& \sC(GL(V)) \ar[r]^-{\Phi^*} \ar[d]^-{VE_{GL(V)}}& \sC(G) \ar[d]^-{VE_G}\\
 \sC(\mathfrak{gl}(V))_{U(h)-\mathrm{basic}}  \ar[r]^-\iota & \sC(\mathfrak{gl}(V)) \ar[r]^-{\Psi^*}&\sC(A).
}
\end{equation}
To see that the classes do not depend on the choices, recall that $VE_{GL(V)}\colon H(GL(V))\to H(\mathfrak{gl}(V))$ is injective by Corollary \ref{cor:VE-injective}. In this case, the result follows at once by this fact, \eqref{eq:1} and Lemma \ref{prop:ch}. 
\end{proof}

The main properties of the characteristic classes are:

\begin{proposition}\label{prop:properties} For any representations $V,W$ of $G$, the characteristic classes satisfy the following properties:
\begin{enumerate}
\item $v_{2p-1}(V\oplus W)=v_{2p-1}(V)+v_{2p-1}(W).$
\item $v_{2p-1}(V\otimes W)=v_{2p-1}(V)\rk(W)+\rk(V) v_{2p-1}(W).$
\item $v_{2p-1}(V^*)=i^*v_{2p-1}(V),$ where $i:G\to G$ is the inverse map. In particular, \[v_{1}(V^*)=-v_1(V).\]
\item $v_{2p-1}(V)=0$ if $p$ is even and $V$ is real. 
\end{enumerate}
\end{proposition}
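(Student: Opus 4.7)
The plan is to reduce each identity to a universal statement about the class $v_{2p-1}$ on the groupoid $GL_n$, which then descends, via Corollary \ref{cor:VE-injective} together with Proposition \ref{prop:gpd-to-algb}, to an identity for $u_{2p-1}=i^{p-1}\Phi_{2p-1}$ in $H(\gln)$ that can be verified by explicit trace computations. The key observation is that each operation on representations factors $\Phi\colon G\to GL(V)$ through a canonical Lie groupoid homomorphism of general linear groupoids: direct sum through the block-diagonal inclusion $GL(V)\times GL(W)\hookrightarrow GL(V\oplus W)$; tensor product through the Kronecker product $GL(V)\times GL(W)\to GL(V\otimes W)$; dualization through the automorphism $g\mapsto (g^{-1})^t$ of $GL(V)$; and complexification of a real representation through the inclusion $GL_n(\R)\hookrightarrow GL_n(\C)$. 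By naturality of Definition \ref{def:classes_groupoid}, it then suffices to verify the corresponding identities for the pullbacks of the universal classes along these morphisms.

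For (1), the block-diagonal embedding at the algebra level sends $(A,B)\mapsto A\oplus B$, under which $\mathrm{tr}(C_1\cdots C_{2p-1})$ splits as $\mathrm{tr}(A_1\cdots A_{2p-1})+\mathrm{tr}(B_1\cdots B_{2p-1})$ whenever $C_j=A_j\oplus B_j$; thus $\Phi_{2p-1}$, and hence $u_{2p-1}$, is additive. For (2), writing $Z_j=X_j\otimes I+I\otimes Y_j$, the product $Z_1\cdots Z_{2p-1}$ expands as a sum over subsets $T\subseteq\{1,\ldots,2p-1\}$ indicating the positions at which one chooses the $Y$-factor; the trace of each such term splits as $\mathrm{tr}(\cdots)_V\cdot\mathrm{tr}(\cdots)_W$. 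The antisymmetrization in $\Phi_{2p-1}$, combined with the cyclicity of the trace, forces all mixed terms (i.e.\ $0<|T|<2p-1$) to vanish: since $2p-1$ is odd, one of $|T|$ or $|T^c|$ is necessarily even, and $\Phi_r$ vanishes identically for $r$ even (by cyclic invariance of the trace against the antisymmetrization of even degree). What survives is exactly $\rk(W)\,\Phi_{2p-1}(X_1,\ldots,X_{2p-1})+\rk(V)\,\Phi_{2p-1}(Y_1,\ldots,Y_{2p-1})$, yielding the claimed identity.

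For (3), the dual representation factors through the automorphism $g\mapsto (g^{-1})^t$ of $GL_n$, which at the cochain level factors as inversion followed by transposition; one checks the identity by pulling back $v_{2p-1}$ along each factor separately, noting that transposition reverses the order inside the trace (contributing a sign from the cyclic reversal in $\Phi_{2p-1}$) while inversion is exactly $i^*$. For (4), if $V$ is real then its complexified representation factors through $\gln(\R)$, on which $\Phi_{2p-1}$ is real-valued; hence $u_{2p-1}=i^{p-1}\Phi_{2p-1}$ takes values in $i^{p-1}\R$, which is purely imaginary when $p$ is even and therefore vanishes as a real-valued cochain, forcing $v_{2p-1}(V)=0$ by van Est injectivity. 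I expect the main obstacle to be (2), where the combinatorial bookkeeping of which expansion terms survive antisymmetrization must be carried out in detail; the remaining parts are routine sign manipulations once the universal factorizations are in place.
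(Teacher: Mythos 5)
Your core computations are the right ones, and in fact they are the same trace identities the paper ultimately relies on: additivity of $\mathrm{tr}$ on block sums for (1), the expansion of $\Phi_{2p-1}$ on $X\otimes I+I\otimes Y$ with mixed terms killed by the vanishing of even antisymmetrized traces for (2), and the real/imaginary parity of $i^{p-1}\Phi_{2p-1}$ on $\gln(\R)$ for (4). Where you genuinely diverge from the paper is the transfer mechanism from the Lie-algebra identity back to the groupoid classes. The paper works directly at the groupoid cochain level: it factors $M_{\Phi_V\otimes\Phi_W}$ through $\widetilde{\sC}(GL(V)\otimes GL(W))^{U(h)\otimes U(h')}$ and then verifies $\iota^*(v_{2p-1}(nm))=m\,v_{2p-1}(n)+n\,v_{2p-1}(m)$ by an explicit computation with the van Est \emph{integration} map $R_{GL_n/U(n)}$ of \eqref{eq:veintk}, using that the Cartan decompositions and the curves $\gamma^{(q)}$ are compatible with the Kronecker product, so the pulled-back Maurer--Cartan integrand splits. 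You instead propose to apply the van Est \emph{differentiation} map and use its injectivity to descend to $H(\gln)$. Both routes buy the same thing; yours avoids the explicit integral formula but shifts the burden onto injectivity statements.

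That shift is where the gap sits. Corollary \ref{cor:VE-injective} gives injectivity of $VE$ only for the groupoid $GL(V)$ itself, and Proposition \ref{prop:gpd-to-algb} does not help you conclude an identity in $H(G)$ from one in $H(A)$, since $VE_G$ is not injective for a general $G$. The universal object carrying the identity in (2) is not $GL_n$ but the morphism $GL(V)\otimes GL(W)\to GL(V\otimes W)$ (isotropy $GL_n\times GL_m$, maximal compact $U(n)\times U(m)$); for (1) it is the block-diagonal inclusion; for (4) it is $GL_n(\R)\subset GL_n(\C)$ with maximal compact $O(n)$. In each case you need the analogue of Corollary \ref{cor:VE-injective} for that groupoid — provable by the same route (Theorem \ref{prop:rightinversegk} plus injectivity of $H(\g,K)\hookrightarrow H(\g)$ for the relevant reductive pair), but not literally the cited statement, and it must be supplied. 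You also need the factorization of the full composition \eqref{eq:classes_composition} through these morphisms, which involves compatibility with the chosen metrics ($h\otimes h'$ versus $U(h)\otimes U(h')$) and with $\mathrm{ext}$ and $C_F^*$; the paper checks this with an explicit commutative diagram, and Lemma \ref{prop:ch} is what lets you replace $h\otimes h'$ by a more convenient metric. Finally, in (3) your sign bookkeeping is incomplete: transposition reverses the word inside the trace and multiplies $\Phi_{2p-1}$ by $(-1)^{p-1}$, so as stated your argument would produce $v_{2p-1}(V^*)=(-1)^{p-1}i^*v_{2p-1}(V)$; the missing ingredient is the complex conjugation hidden in identifying $(V^*,h^*)$ with $(\C^n,h_0)$ via the (conjugate-linear) musical isomorphism, which contributes another $(-1)^{p-1}$ through $\overline{\Phi_{2p-1}}=(-1)^{p-1}\Phi_{2p-1}$ on Hermitian matrices and cancels the first sign.
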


\begin{proof}
Property (c) follows directly from the definition of the action on the dual $V^*$. The proofs of (a), (b) and (d) are similar; we will show (b) and (d).\\

For (b), consider the Lie subgroup $GL(V)\otimes GL(W)\subset GL(V\otimes W)$ consisting of the tensor product $\phi\otimes \psi\colon V_m\otimes W_m\to V_{m'}\otimes W_{m'}$ of linear isomorphisms  $\phi\colon V_m\to V_{m'}$, $\psi\colon W_m\to W_{m'}$. Similarly, if $h,h'$ are Hermitian metrics for $V$ and $W$, respectively, the Lie subgroupoid $U(h)\otimes U(h')\subset GL(V)\otimes GL(W)$ is proper and transitive and is a subgroupoid of $U(h\otimes h').$

If $\Phi_V\colon G\to GL(V)$ and $\Phi_W\colon G\to GL(W)$ denote the representations of $G$, then its induced representation on $V\otimes W$ factors as $\Phi_V\otimes \Phi_W\colon G\to GL(V)\otimes GL(W)$, and 
\begin{equation*}
\xymatrix{
\widetilde{\sC}(GL(V\otimes W))^{U(h\otimes h')}\ar[r]^-{\Phi_V^*\otimes \Phi_W^*}  \ar[d]_-{\iota^*} &\sC(G)\\
\widetilde{\sC}(GL(V)\otimes GL(W))^{U(h)\otimes U( h')}\ar[ru]_-{\Phi_V^*\otimes \Phi_W^*} & 
}
\end{equation*}
commutes, where $\iota\colon GL(V)\otimes GL(W)\to GL(V\otimes W)$ is the inclusion.
Denoting by $n=\rk(V), m=\rk(W)$, it is then a straightforward check to see that the composition $$M_{\Phi_V^*\otimes \Phi_W^*}\colon\widetilde{\sC}(GL_{nm})^{U(nm)}\to \sC(G)$$ from \eqref{eq:classes_composition} factors as 
\[
\xymatrix{
 \widetilde{\sC}(GL_{nm})^{U(nm)} \ar[r]^-{\iota^*}  & \widetilde{\sC}(GL_n\otimes GL_m)^{U(n)\otimes U(m)} \ar[r]^-{C_F^*\otimes C_{F'}^*} & \widetilde{\sC}(GL(V_z)\otimes GL(W_z))^{U(h_z)\otimes U(h'_z)}& &
 }
 \]
 \[
 \xymatrix{
& & & & & & \ar[r]^-{\mathrm{ext}\otimes \mathrm{ext}} & \widetilde{\sC}(GL(V)\otimes GL(W))^{U(h)\otimes U(h')} \ar[r]^-{\Phi_V^*\otimes \Phi_W^*} & \sC(G).
}
\]
It is then enough to see that for $v_{2p-1}(nm)\in  \widetilde{\sC}(GL_{nm})^{U(nm)}$ the generators as in \eqref{eq:generators}
$$\iota^*(v_{2p-1(nm)})=v_{2p-1}(n)m+nv_{2p-1}(m),$$
with $v_{2p-1}(n)\in  \widetilde{\sC}(GL_{n})^{U(n)}$, $v_{2p-1}(m)\in  \widetilde{\sC}(GL_{m})^{U(m)}$ the generators, accordingly. 

Indeed, we use the {\it Kronecker product} $A\otimes B\in GL_{nm}$ for matrices $A\in GL_n,B\in GL_m$ which represents the tensor product of the two invertible maps $A, B$. By general properties of the Kronecker product one can verify that  if $[A]=e^X$ modulo $U(n)$ and $[B]=e^Y$ modulo $U(m)$, then $[A\otimes B]=e^X\otimes e^Y$ modulo $U(nm)$ and $\lambda_t(e^X\otimes e^Y)=e^{tX}\otimes e^{tY}$ ($\lambda_t$ as in \eqref{eq:lambda}). Therefore, for $A_1,\ldots, A_q\in GL_n,\ B_1,\ldots, B_q\in GL_m$ 
\[\gamma^{(q)}_{t_1,\ldots, t_q}(A_1\otimes B_1,\ldots A_q\otimes B_q)=\gamma^{(q)}_{t_1,\ldots, t_q}(A_1,\ldots A_q)\otimes \gamma^{(q)}_{t_1,\ldots, t_q}( B_1,\ldots B_q)\]
$\gamma$ as in \eqref{eq:gammaformula0}, and by bilinearity of $\otimes$ and the fact that 
\begin{equation}\label{eq:mult}(A\otimes B)(C\otimes D)=AC\otimes BD,\end{equation} it follows that 
\begin{multline*}
\theta\left(\frac{\partial}{\partial t_i}(\gamma^{(q)}_{t_1,\ldots, t_q}(A_1\otimes B_1,\ldots A_q\otimes B_q))\right)=\\
\theta\left(\frac{\partial}{\partial t_i}(\gamma^{(q)}_{t_1,\ldots, t_q}(A_1,\ldots A_q))\right)\otimes I_m+I_n\otimes\theta\left(\frac{\partial}{\partial t_i}(\gamma^{(q)}_{t_1,\ldots, t_q}(A_1,\ldots A_q))\right)
\end{multline*}
with $\theta$ the left invariant MC-form of $GL_{nm}, GL_n$ and $GL_m$, respectively. Again, by property \eqref{eq:mult} and $\mathrm{tr}(A\otimes B)=\mathrm{tr}(A)\mathrm{tr}(B)$, and recalling that $u_q\in C^q(\mathfrak{gl}_{nm})_{U(nm)-\mathrm{basic}},\ q=2p-1$ are sums of traces as in \eqref{eq:u}, we further obtain that 
\begin{multline*}
\iota^*(v_q(nm))(A_1\otimes B_1,\ldots A_q\otimes B_q)=\int_{[0,1]^q}\gamma^{(q)}(A_1\otimes B_1,\ldots A_q\otimes B_q)^*\iota^*(u_{q,GL_{nm}/U{nm}})
=\\\mathrm{tr}(I_m)\int_{[0,1]^q}\gamma^{(q)}(A_1,\ldots A_q)^*u_{q,GL_{n}/U{n}}+\mathrm{tr}(I_n)\int_{[0,1]^q}\gamma^{(q)}(B_1,\ldots B_q)^*u_{q,GL_{m}/U{m}}
=\\mv_q(n)(A_1,\ldots A_q)+nv_q(m)(B_1,\ldots B_q).
\end{multline*}

For (d) notice that the obvious inclusion $\gln(\R)\subset \gln(\C)$ factors through

 the diagram
\begin{equation*}
\xymatrix{
\sC(\gln(\C))_{U(n)-\mathrm{basic}}\ar[rr]^-{R_{GL_n(\C)/U(n)}}  \ar[d]_-{\iota^*} & & \widetilde{\sC}(GL_{n}(\C))^{U(n)}\\
\sC(\gln(\R))_{O(n)-\mathrm{basic}} \ar[rru]_-{R_{GL_n(\R)/O(n)}} & &
}
\end{equation*} 
and that the restriction to $\gln(\R)$ of the generators $u_{2q-1}$ of $H (\gln(\C),U(n))$ vanish for $q$ even (see \cite{kamber}).
\end{proof}

 \begin{appendix}
\section{Van Est maps using the Perturbation Lemma}\label{appendix}
 In this appendix we recall some of the constructions and results of \cite{Eckhard} (see also \cite{Eckhard_2}). Let 
$(\sD^{\bullet,\bullet},\d,\delta)$ be a double complex, concentrated in non-negative degrees.
Let $(\sX^{\bullet},\d)$ be a cochain complex. A morphism of double complexes
\[ \si\colon \sX\to  \sD\]
 (where $\sX^\bullet$ is regarded as a double complex concentrated in bidegrees $(0,\bullet)$) will be called a \emph{horizontal augmentation map}. 
Passing to total complexes, $\si$ becomes a  cochain map from $X^\bullet$ to the cochain complex $(\on{Tot}^\bullet(\sD),\d+\delta)$. The Perturbation Lemma, due to Brown \cite{Brown} and Gugenheim \cite{Gugen}, 
allows us to turn a homotopy operator for the horizontal differential $\delta$ into a homotopy operator with respect to the total differential $\d+\delta$. 
%
\begin{lemma}[Perturbation Lemma] \label{lem:perturbed}
		Suppose $\mathsf{h}\colon \sD\to \sD$ is a linear map of bidegree  $(-1,0)$ such that 
		\[ [\mathsf{h},\delta]=1-\si\circ \sp\] for some degree $0$ map 
		$\sp\colon \sD^{0,\bullet}\to \sX^\bullet$.
		Put 
		$\mathsf{h}'=\mathsf{h}(1+\d \mathsf{h})^{-1}$ and $ \sp'=\sp(1+\d \mathsf{h})^{-1}$. 
		Then 
		\[ [\mathsf{h}',\d+\delta]=1-\si\circ\sp'.\]
\end{lemma}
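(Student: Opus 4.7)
The plan is to verify the identity $[\mathsf{h}',\d+\delta]=1-\si\sp'$ by direct algebraic manipulation of the defining formulas, treating $T:=1+\d\mathsf{h}$ and its companion $V:=1+\mathsf{h}\d$ as formally invertible operators (which they are, since $\d\mathsf{h}$ and $\mathsf{h}\d$ strictly lower the horizontal degree on the bigraded complex, so the Neumann series $T^{-1}=\sum_{k\geq 0}(-\d\mathsf{h})^{k}$ terminates on each bidegree; analogously for $V^{-1}$). The link between the two is the tautology $\mathsf{h} T=V\mathsf{h}$, which gives $\mathsf{h} T^{-1}=V^{-1}\mathsf{h}$, so $\mathsf{h}'$ has two equivalent descriptions.

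I would then compile a short list of elementary identities, all immediate from $\d^{2}=\delta^{2}=0$, $\d\delta+\delta\d=0$, $[\mathsf{h},\delta]=1-\si\sp$, and the bidegrees of $\si$ and $\mathsf{h}$: namely (i) $\d T^{-1}=\d$, (ii) $V^{-1}\d=\d$, (iii) $\mathsf{h} T^{-1}=V^{-1}\mathsf{h}$, (iv) $\mathsf{h}\si=0$ and hence $V^{-1}\si=\si$. From (i)--(iii) the vertical contribution comes out immediately:
\[
\d\mathsf{h}'+\mathsf{h}'\d=(T-1)T^{-1}+V^{-1}(V-1)=2-T^{-1}-V^{-1}.
\]
For the horizontal contribution, I would first compute $[\delta,T]=-\d(1-\si\sp)$ by expanding $\delta T=\delta+\delta\d\mathsf{h}$ and substituting $\delta\d=-\d\delta$ and $\delta\mathsf{h}=1-\si\sp-\mathsf{h}\delta$; this gives $[T^{-1},\delta]=-T^{-1}\d(1-\si\sp)T^{-1}$, and combining with the splitting $\mathsf{h}'\delta=\mathsf{h}\delta T^{-1}+\mathsf{h}[T^{-1},\delta]$, the cancellation afforded by (iv) (via $(1-V^{-1})(1-\si\sp)=1-V^{-1}$) collapses the horizontal piece to $\delta\mathsf{h}'+\mathsf{h}'\delta=V^{-1}T^{-1}-\si\sp T^{-1}$. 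Adding the two contributions and rearranging yields
\[
[\mathsf{h}',\d+\delta]=1+(1-V^{-1})(1-T^{-1})-\si\sp T^{-1}.
\]

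The main, and essentially only, obstacle is then the vanishing of the residual cross term. Writing $1-V^{-1}=V^{-1}\mathsf{h}\d$ and $1-T^{-1}=\d\mathsf{h} T^{-1}$ exposes a factor of $\d^{2}$ between them, so $(1-V^{-1})(1-T^{-1})=V^{-1}\mathsf{h}\,\d^{2}\,\mathsf{h} T^{-1}=0$. This is the structural reason why the correct perturbation of $\mathsf{h}$ is precisely $\mathsf{h}(1+\d\mathsf{h})^{-1}$ and why introducing the mirror operator $V$ alongside $T$ makes the bookkeeping close up neatly. Once this cancellation is in hand, the identity collapses to $1-\si\sp'$. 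The one notational subtlety to watch throughout is that $\sp$ is \emph{a priori} defined only on $\sD^{0,\bullet}$, so expressions such as $\sp T^{-1}$ must be read with the convention that $\si\sp$ extends by zero to an endomorphism of $\sD$; every manipulation above respects this convention.
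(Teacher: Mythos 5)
Your proof is correct and complete. The paper itself does not prove Lemma \ref{lem:perturbed}; it only cites Brown and Gugenheim, so there is no internal argument to compare against. Your verification is the standard direct proof of the basic perturbation lemma, and every step checks out: the operators $T=1+\d\mathsf{h}$ and $V=1+\mathsf{h}\d$ are indeed invertible because $\d\mathsf{h}$ and $\mathsf{h}\d$ lower the horizontal degree and the complex is bounded below; the intertwining $\mathsf{h}T=V\mathsf{h}$ and the identities $\d T^{-1}=\d$, $V^{-1}\d=\d$, $V^{-1}\si=\si$ all follow as you say; the computation $[\delta,T]=-\d(1-\si\circ\sp)$ is right (using $\d\delta+\delta\d=0$, which is the paper's convention since it takes $\d=(-1)^p\d_{CE}$ precisely to make the total differential square to zero); and the residual cross term $(1-V^{-1})(1-T^{-1})=V^{-1}\mathsf{h}\,\d^2\,\mathsf{h}T^{-1}$ vanishes for the structural reason you identify. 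Your closing remark about extending $\si\circ\sp$ by zero to positive horizontal degrees is exactly the convention the paper uses (cf.\ the description of $\sp_K$ around \eqref{eq:pmap}). One could add, for completeness, the observation that $\sp'\circ\si=\sp\circ T^{-1}\circ\si=\sp\circ\si$ (since $\mathsf{h}\si=0$ forces $T^{-1}\si=\si$), which is what the paper uses right after the lemma to conclude that $\si\circ\sp'$ is again a projection; but this is not part of the statement you were asked to prove.
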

Here, $[\cdot,\cdot]$ denotes the \emph{graded} commutator, e.g. $[\mathsf{h},\delta]=\mathsf{h}\delta+\delta\mathsf{h}$.  
	
We shall assume from now on that $\sp\circ \si=\id_{\sX}$, so that $\si$ is injective and $\si\circ \sp$ is a projection onto the image of $\si$. Then also $\sp'\circ \si=\id_{\sX}$, and $\si\circ \sp'$ is again a projection. 
In other words, $\si\colon \sX^\bullet\to \on{Tot}^\bullet(\sD)$ is a homotopy equivalence, with $\sp'$ a homotopy inverse.

In our applications, there is another cochain complex $(\sY^\bullet,\delta)$, with a
\emph{vertical augmentation map} 
\[ \sj\colon \sY\to \sD\]
 (thus $\sY^\bullet$ is regarded as a double complex concentrated in bidegrees $(\bullet,0)$). The horizontal homotopy $\mathsf{h}$ allows us to `invert' the second cochain map in 
\[ \sY^\bullet\stackrel{\sj}{\lra} \on{Tot}^\bullet(\sD)\stackrel{\si}{\longleftarrow}\sX^\bullet,\]
thereby producing a cochain map 
$\sp'\circ \sj=\sp\circ (1+\d \mathsf{h})^{-1}\circ \sj\colon \sY^\bullet\to \sX^\bullet$. 
On elements of degree $p$, this is given by 
\begin{equation}\label{eq:xy} (-1)^p \sp \circ (\d \mathsf{h})^p \circ \sj\colon \sY^p\to \sX^p,\end{equation}

Consider now the situation that the vertical differential has a homotopy operator 
\[ \mathsf{k}\colon \sD^{p,q}\to \sD^{p,q-1},\ \ [\d, \mathsf{k}]=1-\sj\circ \sq,\]
where 
$\sq\colon \sD^{0,\bullet}\to \sY^\bullet$ is a cochain map for $\d$ with $\sq\circ \sj=\id_{\sY}$. Then we can apply the Perturbation Lemma \ref{lem:perturbed} to 
this vertical homotopy, and we obtain a cochain map $\sq\circ (1+\delta \mathsf{k})^{-1}\circ \si\colon \sX^\bullet\to \sY^\bullet$ given on degree $p$ elements by 
\begin{equation}\label{eq:yx}  (-1)^p \sq \circ (\delta \mathsf{k} )^p \circ \si\colon \sX^p\to \sY^p.
\end{equation}
%
The following result will be used to relate van Est `integration' and `differentiation'  maps. 

\begin{lemma}\label{lem:backandforth}
Suppose the homotopy operators $\mathsf{h}, \mathsf{k}$ 
satisfy 
\begin{equation}\label{eq:hk0} \mathsf{h}\circ  \mathsf{k}=0,\ \ \ \ \sp\circ  \mathsf{k}=0.\end{equation}
Then \eqref{eq:yx} followed by 
\eqref{eq:xy} is the identity map of $\sX^p$.
\end{lemma}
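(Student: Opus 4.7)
The composition of \eqref{eq:yx} followed by \eqref{eq:xy} acting on $\sX^p$ equals $\sp\,(\d\mathsf{h})^p\,\sj\sq\,(\delta\mathsf{k})^p\,\si$ (the two signs $(-1)^p$ cancel). My plan is to reduce this all the way down to $\sp\,\si=\id_{\sX^p}$ using the two homotopy identities $\mathsf{h}\delta+\delta\mathsf{h}=1-\si\sp$ and $\d\mathsf{k}+\mathsf{k}\d=1-\sj\sq$, together with the hypotheses $\mathsf{h}\mathsf{k}=0$ and $\sp\mathsf{k}=0$, and the bidegree restrictions that make $\mathsf{h}$ vanish on $\sD^{0,\bullet}$, $\mathsf{k}$ vanish on $\sD^{\bullet,0}$, $\si\sp$ vanish off $\sD^{0,\bullet}$, and $\sj\sq$ vanish off $\sD^{\bullet,0}$.

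The one key identity I would extract first is
\[ \mathsf{h}\,\delta\,\mathsf{k}=\mathsf{k} \]
valid on all of $\sD$. This falls out by writing $\mathsf{h}\delta=1-\delta\mathsf{h}-\si\sp$ and composing on the right with $\mathsf{k}$: the two correction terms $\delta\mathsf{h}\mathsf{k}$ and $\si\sp\mathsf{k}$ both vanish by the hypotheses. A companion consequence is that $(\d\mathsf{h})^j\circ\mathsf{k}=0$ for every $j\geq 1$, since the rightmost factor is $\mathsf{h}\mathsf{k}=0$.

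With these tools in hand I would simplify in two stages. Stage one: $(\delta\mathsf{k})^p\si$ lands in $\sD^{p,0}$, where $\mathsf{k}=0$ forces $\sj\sq=1-\mathsf{k}\d$; applying $(\d\mathsf{h})^p$ and using the companion observation kills the $\mathsf{k}\d$ correction, so the composition collapses to $\sp\,(\d\mathsf{h})^p\,(\delta\mathsf{k})^p\,\si$. Stage two: peeling off the innermost $\mathsf{h}\delta\mathsf{k}$ via the identity above and then invoking $\d\mathsf{k}=1-\mathsf{k}\d$ (valid on bidegrees with $q\geq 1$) produces the recursion
\[ (\d\mathsf{h})^j(\delta\mathsf{k})^j\si=(\d\mathsf{h})^{j-1}(\delta\mathsf{k})^{j-1}\si \qquad (j\geq 2), \]
with the $\mathsf{k}\d$ correction again annihilated by $(\d\mathsf{h})^{j-1}\mathsf{k}=0$. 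Iterating down to $j=1$ and using $\d\si=\si\d$ (since $\si$ is a cochain map) yields $(\d\mathsf{h})^p(\delta\mathsf{k})^p\si=\si-\mathsf{k}\si\d$. Applying $\sp$ and using $\sp\si=\id_\sX$ together with $\sp\mathsf{k}=0$ finally gives $\id_{\sX^p}$. The $p=0$ case requires no iteration and is handled entirely by the stage-one calculation.

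The only real obstacle will be the bidegree bookkeeping, since the homotopy identities take slightly different forms on $\sD^{0,\bullet}$, on $\sD^{\bullet,0}$, and in the interior, and the hypotheses $\mathsf{h}\mathsf{k}=0$ and $\sp\mathsf{k}=0$ must be invoked at precisely the right moments in the chain. Once the single identity $\mathsf{h}\delta\mathsf{k}=\mathsf{k}$ is isolated and combined with its companion $(\d\mathsf{h})^j\mathsf{k}=0$, the rest of the argument is a transparent cascade.
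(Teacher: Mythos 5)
Your proof is correct. Note that the paper itself does not prove Lemma \ref{lem:backandforth} --- it is stated in the appendix and deferred to the cited references --- so there is no in-text argument to compare against; but your direct verification is complete and all the steps check out. The two pillars, $\mathsf{h}\delta\mathsf{k}=\mathsf{k}$ (from $\mathsf{h}\delta=1-\delta\mathsf{h}-\si\sp$ together with $\mathsf{h}\mathsf{k}=0$, $\sp\mathsf{k}=0$) and $(\d\mathsf{h})^j\mathsf{k}=0$ for $j\geq 1$, are exactly the right extractions from \eqref{eq:hk0}, and the bidegree bookkeeping you flag as the "only real obstacle" is handled correctly: $(\delta\mathsf{k})^{j-1}\si(\sX^p)$ sits in $\sD^{j-1,p-j+1}$ with $p-j+1\geq 1$ for $j\leq p$, so $\sj\sq$ vanishes there and $\d\mathsf{k}=1-\mathsf{k}\d$ as you use, while on $\sD^{\bullet,0}$ the identity degenerates to $\sj\sq=1-\mathsf{k}\d$ as in your stage one. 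For comparison, the argument implicit in the references is slightly more structural: one observes that $(1+\d\mathsf{h})^{-1}\mathsf{k}=\mathsf{k}$ (your companion identity, summed), hence $\sp'\mathsf{k}'=\sp\mathsf{k}(1+\delta\mathsf{k})^{-1}=0$ for the perturbed operators $\sp'=\sp(1+\d\mathsf{h})^{-1}$ and $\mathsf{k}'=\mathsf{k}(1+\delta\mathsf{k})^{-1}$; then $\sp'\sj\sq'\si=\sp'\bigl(\si-[\mathsf{k}',\d+\delta]\si\bigr)=\sp'\si-\d\sp'\mathsf{k}'\si-\sp'\mathsf{k}'\si\d=\id_{\sX}$, using that $\si$ and $\sp'$ are cochain maps for the total differential. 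That route avoids the degree-by-degree recursion, at the cost of invoking the conclusion of the Perturbation Lemma for both homotopies; your recursion makes the same cancellations visible term by term. Either way the statement is established.
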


For our application we start with a Lie groupoid $G$ with Lie algebroid $A$. The two complexes $\sX=\sC(A),\  \, \sY=\sC(G)$ are related by a double complex, due to Crainic \cite{Crainic:vanEst}. Recall from the discussion around \eqref{eq:kappap} that 
\[\kappa\colon E G\to B G\]
is a simplicial principal bundle, where for each $p$, the $G$-action on $E_pG$ happens along the map $\pi_p\colon E_pG\to M$. The 
double complex 
\begin{equation}\label{eq:double} (\sD^{\bullet,\bullet}(G),\delta,\d)\end{equation} 
is defined as follows.
\begin{itemize}
\item The bigraded summands of the double complex
are 
\begin{equation}\label{eq:2altdef} 
\sD^{p,q}(G)=\Gamma\big(\pi_p^*(\wedge^q A^*)\big).\end{equation}
\item 
$\delta$ is the simplicial differential on sections of the simplicial vector bundle 
\[ \pi_\bullet^*(\wedge^q A^*)\to E_\bullet G.\] 
\item 
$\d=(-1)^p\d_{CE} $ on elements of bidegree $(p,q)$, is the Chevalley-Eilenberg differential on 
\begin{equation}\label{eq:dlacomplex}
\sD^{p,\bullet}(G)\cong \sC^\bullet(\pi_p^*A)\end{equation}
Specifically, let $\F$ be the foliation of $E_pG$ given by the $\kappa_p$-fibers; thus $T_\F E_pG$ is the vertical bundle. The isomorphism 
$\pi_p^* A\cong T_\F E_pG$ defines an algebroid structure on 
$\pi_p^*A$.
\end{itemize}
Furthermore, the double complex comes with horizontal and vertical augmentation maps: 
\begin{itemize}
\item  $\si\colon \sC^\bullet(A)\to 
\sD^{0,\bullet}(G)$ is given in degree $q$ by the pullback $\pi_0^*$ (using \eqref{eq:dlacomplex} for $p=0$). 
%
\item 
$\sj\colon \sC^\bullet(G)
\to \sD^{\bullet,0}(G)$ is given in degree $p$ by the pullback map $\kappa_p^*$ (using \eqref{eq:kappap}).
%
\end{itemize}
The augmentation maps define cochain maps to the total complex
\[ \sC^\bullet(G)\stackrel{\sj}{\lra} \on{Tot}^\bullet(\sD(G))\stackrel{\si}{\longleftarrow}\sC^\bullet(A) .\]

\subsection*{Differentiation}The double complex $(\sD^{\bullet,\bullet}(G),\delta,\d)$ has a horizontal homotopy. Consider the maps
\[ h_p\colon E_pG\to E_{p+1}G,\ (g_1,\ldots,g_p;g)\mapsto (g_1,\ldots,g_p,g;m),\]
where $m=\sz(g)$. Since  $\pi_{p+1}\circ h_{p}=\pi_{p}$, these lift to fiberwise isomorphisms of vector bundles $\pi_{p}^*(\wedge^q A^*)\to \pi_{p+1}^*(\wedge^q A^*)$, defining a  pullback map on sections. 
The map 
\[ \mathsf{h}\colon \sD^{p,q}(G)\to \sD^{p-1,q}(G),\ \ \psi\mapsto (-1)^p h_{p-1}^*\psi\]
satisfies 
$[\mathsf{h},\delta]=1-\si\circ \sp,$
where $\sp\colon \sD^{0,\bullet}(G)\to \sC^\bullet(A)$ is the 
left inverse to $\si=\pi_0^*$
given by pullback under the inclusion  $u\colon M\hookrightarrow E_0 G=G$:
\[ \sp={\uz}^*\colon \Gamma(\pi_0^*(\wedge^q A^*))\to \Gamma(\wedge^q A^*).\] 

The Perturbation Lemma \ref{lem:perturbed} gives a new projection $\sp'=\sp\circ (1+\d \mathsf{h})^{-1}$,  which is a cochain map for the total differential $\d+\delta$, with $\sp'\circ \si=\id$. Thus, $\si$ is a homotopy equivalence, with $\sp'$ a homotopy inverse.  As a result from \cite{Eckhard_2}, we have the following:
\begin{theorem}\label{Thm:appendix}
The van Est differentiation map \eqref{eq:vanEst_expression} is obtained as 
\begin{equation*}
VE_G=	\sp \circ (1+\d \mathsf{h})^{-1}\circ \sj\colon \sC^\bullet(G)\to 
\sC^\bullet(A).
\end{equation*}
\end{theorem}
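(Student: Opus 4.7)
The plan is to expand $\sp\circ(1+\d\mathsf{h})^{-1}\circ\sj$ via the Perturbation Lemma and verify it coincides with Weinstein--Xu's expression \eqref{eq:vanEst_expression}. First I would invoke \eqref{eq:xy} with $\sY=\sC(G)$ and $\sX=\sC(A)$, which reduces the composition on a $p$-cochain to
\[ (-1)^p\,\sp\circ(\d\mathsf{h})^p\circ\sj. \]
So the task is to compute this iterated composition applied to $f\in\sC^p(G)=C^\infty(B_pG)$, pair the result against sections $\xi_1,\ldots,\xi_p\in\Gamma(A)$, and match with \eqref{eq:vanEst_expression}.

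Second, I would unpack the constituent maps. On $\sD^{q,\bullet}(G)$ the vertical differential is $\d=(-1)^q \d_{CE}$ for the pullback algebroid $\pi_q^*A\cong T_\F E_qG$, i.e.\ (up to a sign) the foliated de Rham differential along the $\kappa_q$-fibers. The horizontal homotopy $\mathsf{h}=(-1)^q h_{q-1}^*$ sends $\sD^{q,\bullet}(G)$ to $\sD^{q-1,\bullet}(G)$ via $h_{q-1}(g_1,\ldots,g_{q-1};g)=(g_1,\ldots,g_{q-1},g;\sz(g))$, which promotes the free $G$-factor to a new arrow in the $B_qG$-slot. Starting from $\sj(f)=\kappa_p^*f\in\sD^{p,0}(G)$ and alternating $\mathsf{h}$ and $\d$, after $p$ steps one lands in $\sD^{0,p}(G)=\Gamma(\pi_0^*\wedge^p A^*)$; the map $\sp=\uz^*$ then restricts to $M\subset E_0G=G$ to produce a section of $\wedge^p A^*$.

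Third, I would pair the iterated composition with $\xi_1,\ldots,\xi_p$ using the Cartan formula for $\d_{CE}$ on $\pi_q^*A$. Under the isomorphism $\pi_q^*A\cong T_\F E_qG$, a section $\pi_q^*\xi$ corresponds to the fiberwise left-invariant vector field $\hat\xi_{(g_1,\ldots,g_q;g)}=(0_{g_1},\ldots,0_{g_q},\xi^L_{g})$ used in the proof of Lemma \ref{lemma:av}. Since $\sj(f)$ is constant along the $\kappa_p$-fibers, the only surviving terms at each stage arise from derivations hitting the ``newly introduced'' $G$-variable carved out by the latest application of $\mathsf{h}$. Tracking through the insertions $h_{p-1},\ldots,h_0$ and finally restricting along $\uz$, the successive left-invariant vector fields reassemble into the infinitesimal generators $\xi^{(i)}$ of the commuting $G$-actions \eqref{eq:actions} on $B_pG$. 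The antisymmetrization built into $\d_{CE}$ produces the sum over permutations $s\in S_p$ with signs, while the factors $(-1)^q$ in $\d$ and $\mathsf{h}$, accumulated over the $p$ stages together with the overall $(-1)^p$ from \eqref{eq:xy}, combine into the sign appearing in \eqref{eq:vanEst_expression}.

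The main obstacle will be the bookkeeping: keeping track of which slot of $B_pG$ each newly inserted $G$-variable lands in after the chain of pullbacks $h_{p-1}\circ\cdots\circ h_0$, and verifying that $p$ nested Chevalley--Eilenberg differentials organize into precisely the sum over $S_p$ with the correct signs and with the left-invariant vector fields correctly identified with the generators $\xi^{(i)}$. Modulo this indexing, the identity reduces to the computation carried out in detail in \cite{Eckhard_2}.
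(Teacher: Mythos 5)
Your outline is sound and matches the paper's treatment: the paper does not actually prove this theorem but quotes it from \cite{Eckhard_2}, and your reduction to $(-1)^p\,\sp\circ(\d\mathsf{h})^p\circ\sj$ via \eqref{eq:xy}, followed by unwinding $\mathsf{h}$ and $\d$ against the fiberwise left-invariant vector fields and reassembling them into the generators $\xi^{(i)}$, is precisely the computation carried out in that reference. Since you, like the paper, ultimately defer the sign and indexing bookkeeping to \cite{Eckhard_2}, your proposal is correct and takes essentially the same route.
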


\end{appendix}

\bibliographystyle{abbrv}
\bibliography{bibliography_serrapilheira}

\end{document}